% !TEX TS-program = pdflatex

\documentclass[draft]{amsart}
%\usepackage{xcolor}
%\long\def\todo#1{{\color{red}#1}}
%\long\def\historytodo#1{{\color{red!50!black}#1}}

%\usepackage[notref,notcite]{showkeys} 
%\usepackage[paperwidth=6.5in, marginratio=3:1, paperheight=9.5in, vmarginratio=1:1, twoside=false, width=360pt, height=584pt]{geometry}

\usepackage[pdfborder={0 0 0},draft=false,pagebackref]{hyperref}

\makeatletter\def\HyPsd@CatcodeWarning#1{}\makeatother

\hyphenation{der-i-va-tives}

\usepackage{esint,amssymb,tikz,mathtools}

% Theorem setup
\newtheorem{thm}[equation]{Theorem}
\newtheorem{lem}[equation]{Lemma}

\theoremstyle{definition}
\newtheorem{dfn}[equation]{Definition}
\theoremstyle{remark}

\newtheorem{clm}[equation]{Claim}

\numberwithin{equation}{section}
\numberwithin{figure}{section}

% User-defined commands
\newcommand\abs[2][empty]{\csname#1\endcsname \lvert{#2}\csname#1\endcsname\rvert}
\newcommand\doublebar[2][empty]{\csname#1\endcsname \lVert{#2}\csname#1\endcsname\rVert}

% Matrices, arrays and vectors
\newcommand\mat[1]{\boldsymbol{#1}}
\newcommand\arr[1]{\boldsymbol{\dot{#1}}}
% \vec is already defined

\newcommand\dist{\mathop{\mathrm{dist}}\nolimits}
\newcommand\Div{\mathop{\mathrm{div}}\nolimits}
\newcommand\Tr{\mathop{\smash{\arr{\mathrm{Tr}}}\vphantom{T}}\nolimits}

\newcommand\Trace{\mathop{\mathrm{Tr}}\nolimits}
\newcommand\M{\mathop{\smash{\arr{\mathrm{M}}}\vphantom{M}}\nolimits}

\newcommand\supp{\mathop{\mathrm{supp}}\nolimits}
\newcommand\diam{\mathop{\mathrm{diam}}\nolimits}

\newcommand\re{\mathop{\mathrm{Re}}\nolimits}

\newcommand\R{\mathbb{R}}

\newcommand\N{\mathbb{N}}
\newcommand\1{\mathbf{1}}

\newcommand\XX{\mathfrak{X}}
\newcommand\YY{\mathfrak{Y}}

\def\smooth{s}

\newcommand\pmin[1][\smooth]{\pdmnMinusOne/\allowbreak(\dmnMinusOne+#1)}

\def\dmn{{n+1}}
\def\pdmn{{(n+1)}}
\def\dmnMinusOne{n}

\def\pdmnMinusOne{n}

\def\dmn{d}
\def\pdmn{d}
\def\dmnMinusOne{{d-1}}

\def\pdmnMinusOne{{(d-1)}}

% Artistic setup
\usepackage{tikz}
\usetikzlibrary{calc,intersections,through}

\hyphenation{Le-besgue}

\input Perturbation.aux

\begin{document}

\title[Extrapolation of well posedness]{Extrapolation of well posedness for higher order elliptic systems with rough coefficients}

\author{Ariel Barton}
\address{Ariel Barton, Department of Mathematical Sciences,
			309 SCEN,
			University of Ar\-kan\-sas,
			Fayetteville, AR 72701}
\email{aeb019@uark.edu}

\subjclass[2010]{Primary 
% 31 Potential theory
% 35 Partial differential equations
% 42 Harmonic analysis on Euclidean spaces
35J58, %  	Boundary value problems for higher-order elliptic systems
Secondary 
%35C15, %  	Integral representations of solutions
%35G45, %  	Boundary value problems for linear higher (2+)-order systems
%35J08, %  	Green's functions
%31B10, %  	Integral representations, integral operators, integral equations methods
%31A20, %  	Boundary behavior (theorems of Fatou type, etc.) (in 2 dimensions)
31B20%, %  	Boundary value and inverse problems
%31B35, %  	Connections with differential equations
}

\begin{abstract} 
In this paper we study boundary value problems for higher order elliptic differential operators in divergence form. We establish well posedness for problems with boundary data in Besov spaces $\dot B^{p,p}_\smooth$, $p\leq 1$, given well posedness for appropriate values of $\smooth$ and $p>1$. We work with smoothness parameter $\smooth$ between $0$ and~$1$; this allows us to consider inhomogeneous differential equations.

Combined with results of Maz'ya, I.~Mitrea, M.~Mitrea, and Shaposhnikova, this allows us to establish new well posedness results for higher order operators whose coefficients are in or close to the space $VMO$, for the biharmonic operator, and for fourth-order operators close to the biharmonic operator.

%In this paper we study boundary value problems for higher order elliptic differential operators in divergence form. We establish well posedness for problems with boundary data in Besov spaces $\dot B^{p,p}_s$, p<1, given well posedness for appropriate values of s and p>1. We work with smoothness parameter s between 0 and 1; this allows us to consider inhomogeneous differential equations.
%
%Combined with results of Maz'ya, I. Mitrea, M. Mitrea, and Shaposhnikova, this allows us to establish new well posedness results for higher order operators whose coefficients are in or close to the space VMO, for the biharmonic operator, and for fourth-order operators close to the biharmonic operator.
\end{abstract}

\keywords{Higher order differential equation, inhomogeneous differential equation, Dirichlet problem, Neumann problem}

\maketitle

\setcounter{tocdepth}{1}
\tableofcontents
\setcounter{tocdepth}{3}

% Check for out-of-order references
%\def\mylabel#1{\expandafter\gdef\csname laabeel#1\endcsname{Labeled!}}
%\def\ref#1{\expandafter \ifx \csname laabeel#1\endcsname \relax
%	\typeout{Out of order reference to #1 on line \the\inputlineno!}%
%	\else
%	%\typeout{Correctly ordered reference to #1 on line \the\inputlineno!}
%	\fi 99}
%\let\label\mylabel
%\makeatletter \let \label@in@display \mylabel \makeatother

\section{Introduction}

In \cite{Bar16pA}, we studied higher order boundary value problems for elliptic differential operators $L$ of the form
\begin{equation}\label{eqn:divergence}
(L\vec u)_j =\sum_{k=1}^N \sum_{\abs{\alpha}=\abs{\beta}=m}\partial^\alpha (A^{jk}_{\alpha\beta}\,\partial^\beta u_k)\end{equation}
of arbitrary even order~$2m$,
for variable bounded measurable coefficients~$\mat A$. 
In particular, we studied the fully inhomogeneous Dirichlet problem
\begin{multline}
\label{eqn:Dirichlet:p:smooth}
L\vec u=\Div_m\arr H \text{ in }\Omega,\quad \Tr_{m-1}^\Omega\vec u=\arr f , 
\\
\doublebar{\vec u}_{\dot W^{p,\smooth}_{m,av}(\Omega)}\leq C \doublebar{\arr f}_{\dot W\!A^{p}_{m-1,\smooth}(\partial\Omega)} + C \doublebar{\arr H}_{L^{p,\smooth}_{av}(\Omega)}\end{multline}
and the Neumann problem
\begin{multline}
\label{eqn:Neumann:p:smooth}
L\vec u=\Div_m\arr H \text{ in }\Omega,\quad \M_{\mat A,\arr H}^\Omega \vec u=\arr g, 
\\
\doublebar{\vec u}_{\dot W^{p,\smooth}_{m,av}(\Omega)}\leq C \doublebar{\arr g}_{\dot N\!A^{p}_{m-1,\smooth-1}(\partial\Omega)} + C \doublebar{\arr H}_{L^{p,\smooth}_{av}(\Omega)}
\end{multline}
for $p$, $\smooth$ with $0<\smooth<1$ and $\pmin<p\leq\infty$.

In this paper we will extrapolate well posedness from the range $p>1$ to the range $p\leq 1$; this will yield new well posedness results in the case where $\mat A$ lies in the space $VMO$ %(see Section~\ref{sec:MazMS10:introduction}) 
and in the case of the biharmonic operator $L=\Delta^2$. See Section~\ref{sec:A0:introduction}. %(see Section~\ref{sec:biharmonic:introduction}). 

The Dirichlet boundary values $\Tr_{m-1}^\Omega\vec u$ of $\vec u$ in the problem~\eqref{eqn:Dirichlet:p:smooth} are given by $\Tr_{m-1}^\Omega \vec u=\Trace^\Omega \nabla^{m-1}\vec u$, where $\Trace$ is the standard trace operator of Sobolev spaces; see \cite[Definition~2.4]{Bar16pB}. The Neumann boundary values $\M_{\mat A,\arr H}^\Omega \vec u$ are given by 
\begin{equation}\label{dfn:Neumann}
\langle\nabla^m\vec \varphi,\mat A\nabla^m \vec u-\arr H\rangle_\Omega=\langle \Tr^\Omega_{m-1}\vec \varphi, \M_{\mat A,\arr H}^\Omega \vec u\rangle_{\partial\Omega} 
\text{ for all $\vec\varphi\in C^\infty_0(\R^\dmn)$}
\end{equation}
where $\langle\,\cdot\,,\,\cdot\,\rangle_\Omega$ denotes the standard $L^2$ inner product in~$\Omega$. 
The standard weak formulation of the operator $L$ given by formula~\eqref{eqn:divergence} is 
\begin{equation}\label{eqn:weak}
L\vec u=\Div_m\arr H\text{ in $\Omega$ if }
\langle \nabla^m\vec\varphi, \mat A\nabla^m\vec u \rangle_\Omega
=\langle \nabla^m\vec\varphi, \arr H\rangle_\Omega
\text{ for all $\vec\varphi\in C^\infty_0(\Omega)$}.\end{equation}
It may readily be seen that if $\partial\Omega$ is connected and if $L\vec u=\Div_m\arr H$ in $\Omega$ in the weak sense,
then the left-hand side of formula~\eqref{dfn:Neumann} depends only on $\Tr_{m-1}^\Omega\vec\varphi$ and not on the interior values of~$\vec\varphi$, and so $\smash{\M_{\mat A,\arr H}^\Omega \vec u}$ is a well defined operator on $\{\Tr_{m-1}^\Omega\vec \varphi:\vec\varphi\in C^\infty_0(\R^\dmn)\}$. This was the notion of Neumann boundary values used in \cite{Bar16pA,BarHM15p,BarHM17pC}, and is similar to but subtly different from that of \cite{CohG85,Ver05,Agr07,MitM13A}; see \cite{Ver10,BarM16B,BarHM15p} for a more extensive discussion of various notions of Neumann boundary values.

The boundary spaces ${\dot W\!A^{p}_{m-1,\smooth}(\partial\Omega)}$ and ${\dot N\!A^{p}_{m-1,\smooth-1}(\partial\Omega)}$ are subspaces and quotient spaces, respectively, of the vector-valued Besov spaces $\smash{(\dot B^{p,p}_\smooth(\partial\Omega))^{Nq}}$ and $(\dot B^{p,p}_{\smooth-1}(\partial\Omega))^{Nq}$, where $q$ denotes the number of multiindices in~$\N_0^\dmn$ of length~$m-1$. Specifically, ${\dot W\!A^{p}_{m-1,\smooth}(\partial\Omega)}$ is the closure in $\smash{(\dot B^{p,p}_\smooth(\partial\Omega))^{Nq}}$ of $\{\smash{\Tr_{m-1}^\Omega\vec\varphi}:\vec\varphi\in C^\infty_0(\R^\dmn)\}$, while ${\dot N\!A^{p}_{m-1,\smooth-1}(\partial\Omega)}=(\dot B^{p,p}_{\smooth-1}(\partial\Omega))^{Nq}/\sim$ is a quotient space under the equivalence relation $\arr g\sim\arr\gamma$ if $\langle \Tr_{m-1}^\Omega\vec\varphi,\arr g\rangle_{\partial\Omega}=\langle \Tr_{m-1}^\Omega\vec\varphi,\arr \gamma\rangle_{\partial\Omega}$ for all $\vec\varphi\in C^\infty_0(\R^\dmn)$. See \cite{Bar16pB} for a more detailed definition of these function spaces.

The space $\dot W^{p,\smooth}_{m,av}(\Omega)$ is the set of (equivalence classes of) functions $\vec u$ for which the $\dot W^{p,\smooth}_{m,av}(\Omega)$-norm given by
\begin{align}\label{eqn:W:norm:2}
\doublebar{\vec u}_{\dot W^{p,\smooth}_{m,av}(\Omega)}&=\doublebar{\nabla^m \vec u}_{ L^{p,\smooth}_{av}(\Omega)},
\\
\label{eqn:L:norm:2}
\doublebar{\arr H}_{L^{p,\smooth}_{av}(\Omega)}
&= 
\biggl(\int_\Omega \biggl(\fint_{B(x,\dist(x,\allowbreak \partial\Omega)/2)} \abs{\arr H}^2 \biggr)^{p/2}  \dist(x,\allowbreak \partial\Omega)^{p-1-p\smooth}\,dx\biggl)^{1/p}
\end{align}
is finite. (Two functions are equivalent if their difference has norm zero; if $\Omega$ is open and connected then two functions are equivalent if they differ by a polynomial of degree at most $m-1$.)
Here $\fint$ denotes the averaged integral $\fint_B H = \frac{1}{\abs{B}}\int_B H$. 

We refer the reader to \cite[Section~1.1]{Bar16pA} for a more extensive discussion of the historical significance of these function spaces. Here we will merely mention that the spaces ${\dot W\!A^{p}_{m-1,\smooth}(\partial\Omega)}$ and ${\dot N\!A^{p}_{m-1,\smooth-1}(\partial\Omega)}$ are commonly studied spaces of boundary data with fractional orders of smoothness (that is, smoothness parameters between zero and one). The theory of boundary value problems with data in integer smoothness spaces (that is, in Lebesgue or Sobolev spaces) is extensive; however, this theory is tightly focused on homogeneous differential equations $L\vec u=0$ rather than the inhomogeneous equations $L\vec u=\Div_m \arr H$ considered here, and to study inhomogeneous problems we generally must consider boundary data in fractional smoothness spaces. The spaces $\dot W^{p,\smooth}_{m,av}(\Omega)$ and $L^{p,\smooth}_{av}(\Omega)$ are connected by trace and extension theorems to the spaces ${\dot W\!A^{p}_{m-1,\smooth}(\partial\Omega)}$ and ${\dot N\!A^{p}_{m-1,\smooth-1}(\partial\Omega)}$; see \cite{Bar16pB}. Moreover, these spaces are well adapted to the theory of operators with rough coefficients; see \cite[Remark~10.9]{BarM16A}. 
We refer the reader also to \cite{JerK95,AdoP98} for some early appearances of weighted Sobolev norms $\int_\Omega \abs{\nabla^m\vec u(x)}^p\dist(x,\allowbreak \partial\Omega)^{p-1-p\smooth}\,dx$, \cite{MazMS10} for well posedness results given explicitly in terms of weighted Sobolev norms, and \cite[Section~\ref*{P:sec:introduction:history}]{Bar16pA}  for a discussion of the significance of averaged norms.

The main result of the present paper is the following theorem extrapolating well posedness from $p>1$ to $p\leq 1$. In Section~\ref{sec:A0:introduction}, we will combine this result with known results from the literature to derive new well posedness results.

\begin{thm}\label{thm:atoms} 
Let $\Omega\subset\R^\dmn$ be a Lipschitz domain with connected boundary.
Let $L$ be an operator of the form \eqref{eqn:divergence}, defined in the weak sense of formula~\eqref{eqn:weak}, associated to coefficients~$\mat A$ that satisfy the bound
\begin{align}
\label{eqn:elliptic:bounded}
\doublebar{\mat A}_{L^\infty(\R^\dmn)}
%=\esssup_{x\in\R^\dmn} \sup_{\abs{\arr H}=1}\langle \arr H,\mat A(x)\arr H\rangle
&\leq 
	\Lambda
\end{align}
and the G\r{a}rding inequality
\begin{align}
\label{eqn:elliptic:everywhere}
\re {\bigl\langle\nabla^m\vec \varphi,\mat A \nabla^m\vec \varphi\bigr\rangle_{\R^\dmn}} 
&\geq 
	\lambda\doublebar{\nabla^m\vec\varphi}_{L^2(\R^\dmn)}^2
	\quad\text{for all $\vec\varphi\in\dot W^2_m(\R^\dmn)$}
.\end{align}

% If $\Omega$ is bounded we may allow $q_-=1$, $q_+=1$, $q_+=\infty$, $\sigma_+\geq 1$.
% For the Dirichlet problem we may allow $\widetilde q_+=1.

Suppose that there exists a $\sigma_-$ and $q_-$ with 
\begin{equation}\label{eqn:q:sigma:1}0<\sigma_-<1,\quad 1< q_- < \frac{\dmnMinusOne}{\dmn-2+\sigma_-}, \quad q_-\leq 2\end{equation}
such that the Dirichlet problem
\begin{equation}\label{eqn:Dirichlet:q-}L\vec u=\Div_m\arr \Phi\text{ in }\Omega,\quad \Tr_{m-1}^\Omega\vec u=0,\quad \doublebar{\vec u}_{\dot W^{q_-,\sigma_-}_{m,av}(\Omega)} \leq C_0\doublebar{\arr \Phi}_{L^{q_-,\sigma_-}_{av}(\Omega)}\end{equation}
is well posed.

Let $M$ be the primary Lipschitz constant of~$\Omega$.
Suppose that there are some constants $M_0$, $\widetilde \sigma_+$, and $\widetilde q_+$ such that
\begin{equation}\label{eqn:q:sigma:2}
M_0>M,\quad \sigma_-+\pdmnMinusOne\biggl(1-\frac{1}{q_-}\biggr)<\widetilde\sigma_+<1, \quad 1< \widetilde q_+\leq 2\end{equation}
and such that, if $ T $ is a 
bounded simply connected Lipschitz domain  with primary Lipschitz constant at most~$M_0$, and if $\vec u\in {\dot W^{\widetilde q_+,\widetilde \sigma_+}_{m,av}( T )}$ with $L\vec u=0$ in $ T $, then
\begin{equation}\label{eqn:Dirichlet:tilde}
\doublebar{\vec u}_{\dot W^{\widetilde q_+,\widetilde \sigma_+}_{m,av}( T )} \leq C_1\doublebar{\Tr_{m-1}^{ T }\vec u}_{\dot W\!A^{\widetilde q_+}_{m-1,\widetilde \sigma_+}(\partial T )}
\end{equation}
where $C_1$ depends only on $\widetilde q_+$, $\widetilde \sigma_+$, $L$, and the (full) Lipschitz character of~$ T $.

Finally, suppose that there are positive numbers $\sigma_+$ and $q_+$ that satisfy
\begin{multline}\label{eqn:q:sigma:3}\widetilde\sigma_+\leq \sigma_+<1,\quad
1<q_+<\infty,\quad
 \widetilde \sigma_+-\frac{\dmnMinusOne}{\widetilde q_+}
\leq
\sigma_+-\frac{\dmnMinusOne}{q_+},\\\text{and if $\widetilde \sigma_+=\sigma_+$ then $q_+=\widetilde q_+$,}\end{multline}
such that the Dirichlet problem
\begin{equation}\label{eqn:Dirichlet:q+}L\vec u=\Div_m\arr \Phi\text{ in }\Omega,\quad \Tr_{m-1}^\Omega\vec u=0,\quad \doublebar{\vec u}_{\dot W^{q_+,\sigma_+}_{m,av}(\Omega)} \leq C_0\doublebar{\arr \Phi}_{L^{q_+,\sigma_+}_{av}(\Omega)}\end{equation}
is well posed. If $\Omega$ is unbounded we impose the additional assumption that the problems \eqref{eqn:Dirichlet:q-} and~\eqref{eqn:Dirichlet:q+} are compatibly well posed in the sense of \cite[Lemma~\ref*{P:lem:interpolation}]{Bar16pA}.

Then there exist numbers $p$ and $s$ that satisfy the condition
\begin{equation}
\label{eqn:q:smooth}
\sigma_-<s<\widetilde\sigma_+,\quad 0<p\leq 1,\quad \sigma_--\frac{\dmnMinusOne}{q_-}
<s-\frac{\dmnMinusOne}{p} 
\end{equation}
and for every such $s$ and~$p$ the Dirichlet problem~\eqref{eqn:Dirichlet:p:smooth}
%\begin{equation}
%\label{eqn:Dirichlet:p:smooth}
%\left\{
%\begin{aligned}
%L\vec u&=\Div_m\arr H\text{ in }\Omega,\\ 
%\nabla^{m-1}\vec u&=\arr f\text{ on }\partial\Omega, \\ 
%\doublebar{\vec u}_{\dot W^{p,\smooth}_{m,av}(\Omega)} &\leq C\doublebar{\arr H}_{L^{p,\smooth}_{av}(\Omega)}
%+C\doublebar{\arr f}_{\dot W\!A^p_{m-1,\smooth}(\partial\Omega)}
%\end{aligned}\right.\end{equation}
is well posed, where $C$ depends only on $\lambda$, $\Lambda$, the Lipschitz character of~$\Omega$, $p$, $\smooth$, $q_\pm$, $\sigma_\pm$, $\widetilde q_+$, $\widetilde \sigma_+$, $C_0$, $M_0$, and $C_1$.

%Let $s$, $p$, $\sigma_-$, $\sigma_+$, $\widetilde \sigma_+$ and $q_-$, $q_+$, $\widetilde q_+$ satisfy the following conditions:
%\begin{gather*}
%0<\sigma_-,\\ % So \Tr, \M well defined
%\pmin[\sigma_-]<q_-,\\ % So \Tr, \M well defined
%\pmin<p,\\ % So \Tr, \M well defined
%0<\widetilde \sigma_+<1,\\ 
%0<\smooth<1,\\ % So we can have homogeneous boundary data
%\sigma_-<s<\widetilde \sigma_+\leq \sigma_+,
%\\
%p\leq 1,\\ % So we can go cube by cube (or atom by atom)
%1\leq q_-\leq 2,\\ % Apply H\"older's inequality
%1\leq \widetilde q_+\leq 2,\\
%0<q_+\leq \infty,
%\\
%\sigma_--\frac{\dmnMinusOne}{q_-}
%<
%s-\frac{\dmnMinusOne}{p} 
%\leq \sigma_+ -\frac{\dmnMinusOne}{q_+}
%,\\
%\widetilde \sigma_+-\frac{\dmnMinusOne}{\widetilde q_+}
%\leq
%\sigma_+-\frac{\dmnMinusOne}{q_+},\quad\text{and if $\widetilde \sigma_+=\sigma_+$ then $q_+=\widetilde q_+$}
%.\end{gather*}
%If $\Omega$ is unbounded, we additionally require that 
%\begin{gather*}
%1<q_+<\infty,\\
%1<q_-,\\
%\sigma_+<1
%.\end{gather*}
%The acceptable values of $p$, $\smooth$, $q_\pm$ and $\sigma_\pm$ are shown in Figure~\ref{fig:atoms}.

Similarly, suppose that $\mat A$ satisfies the bound~\eqref{eqn:elliptic:bounded} and the local G\r{a}rding inequality
\begin{align}
\label{eqn:elliptic:domain}
\re {\bigl\langle\nabla^m\vec \varphi,\mat A \nabla^m\vec \varphi\bigr\rangle_\Omega} 
&\geq 
	\lambda\doublebar{\nabla^m\vec\varphi}_{L^2(\Omega)}^2
	\quad\text{for all $\vec\varphi\in\dot W^2_m(\Omega)$}
.\end{align}
Let $M_0$, $q_\pm$, $\sigma_\pm$, $\widetilde q_+$, and $\widetilde \sigma_+$ satisfy the conditions \eqref{eqn:q:sigma:1}, \eqref{eqn:q:sigma:2} and~\eqref{eqn:q:sigma:3}.
Suppose that the two Neumann problems
\begin{equation}\label{eqn:Neumann:q-}L\vec u=\Div_m\arr \Phi\text{ in }\Omega,\quad \M_{\mat A,\arr \Phi}^\Omega\vec u=0,\quad \doublebar{\vec u}_{\dot W^{q_\pm,\sigma_\pm}_{m,av}(\Omega)} \leq C_0\doublebar{\arr \Phi}_{L^{q_\pm,\sigma_\pm}_{av}(\Omega)}\end{equation}
are compatibly well posed.
Suppose that there is some $M_0>M$ such that, if $ T $ is a  bounded simply connected Lipschitz domain with primary Lipschitz constant at most~$M_0$, and if $\vec u\in {\dot W^{\widetilde q_+,\widetilde \sigma_+}_{m,av}( T )}$ and $
L\vec u=0$ in $ T $, then
\begin{equation}\label{eqn:Neumann:tilde}
\doublebar{\vec u}_{\dot W^{\widetilde q_+,\widetilde \sigma_+}_{m,av}( T )} \leq  C_1\doublebar{\M_{\mat A,0}\vec u}_{\dot N\!A^{\widetilde q_+}_{m-1,\widetilde\sigma_+-1}(\partial T )}
%\text{ for every $\vec u\in{\dot W^{\widetilde q_+,\widetilde \sigma_+}_{m,av}( T )}$ with $L\vec u=0$ in~$\Omega$} 
\end{equation}
where $C_1$ depends only on $\widetilde q_+$, $\widetilde\sigma_+$, $L$, and the Lipschitz character of~$ T $.
Then for any $p$ and $\smooth$ that satisfy the bounds~\eqref{eqn:q:smooth}, the Neumann problem~\eqref{eqn:Neumann:p:smooth}
%\begin{equation}
%\label{eqn:Neumann:p:smooth}
%\left\{\begin{aligned}
%L\vec u&=\Div_m\arr H\text{ in }\Omega,\\ 
%\M_{\mat A,\arr H}^\Omega\vec u&=\arr g, \\ 
%\doublebar{\vec u}_{\dot W^{p,\smooth}_{m,av}(\Omega)} &\leq C\doublebar{\arr H}_{L^{p,\smooth}_{av}(\Omega)}
%+C\doublebar{\arr g}_{\dot N\!A^p_{m-1,\smooth-1}(\partial\Omega)}
%\end{aligned}\right.
%\end{equation}
is well posed, where $C$ has the same dependencies as before.
\end{thm}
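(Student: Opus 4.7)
The plan is to reduce both the Dirichlet and Neumann claims to atomic decomposition arguments. Since $p \leq 1$, both the boundary datum in $\dot W\!A^{p}_{m-1,s}(\partial\Omega)$ (respectively $\dot N\!A^{p}_{m-1,s-1}(\partial\Omega)$) and the inhomogeneous forcing $\arr H \in L^{p,s}_{av}(\Omega)$ admit atomic decompositions. Boundary atoms come from the atomic characterization of the Besov space $\dot B^{p,p}_s(\partial\Omega)$ on the Lipschitz surface, and interior atoms come from a tent/Whitney decomposition of $L^{p,s}_{av}(\Omega)$, namely pieces supported in Whitney balls $B_j=B(x_j,\dist(x_j,\partial\Omega)/4)$ with the appropriate size and vanishing-moment conditions. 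If we can produce, for each atom $a$, a solution $\vec u_a$ with $\doublebar{\vec u_a}_{\dot W^{p,s}_{m,av}(\Omega)} \leq C$ uniformly in the atom, then summing $\sum_j \lambda_j \vec u_{a_j}$ in the quasi-Banach space $\dot W^{p,s}_{m,av}(\Omega)$ produces the desired solution, with $p$-norm controlled by $\sum_j |\lambda_j|^p$, which is comparable to the atomic norm of the data.

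For each atom $a$ supported in a boundary ball or Whitney ball of radius~$r$ centered at~$x_0$, I would first invoke \eqref{eqn:Dirichlet:q+}: because the atom has compact support and $q_+>1$, the $(q_+,\sigma_+)$-theory produces a solution $\vec u_a$ whose $\dot W^{q_+,\sigma_+}_{m,av}$ norm is controlled by a fixed power of~$r$ and whose $\dot W^{p,s}_{m,av}$ norm can therefore be bounded on a fixed-size neighborhood of $B(x_0,r)$. The critical step is to upgrade this to decay on far-away dyadic annuli $R_k=\{x\in\Omega:2^{k-1}r<|x-x_0|<2^k r\}$: on each such annulus $L\vec u_a=0$, so the local regularity hypothesis \eqref{eqn:Dirichlet:tilde}, applied on Whitney subdomains $T\subset R_k\cap\Omega$ of uniformly bounded Lipschitz character (this is exactly where the bound $M_0>M$ on the primary Lipschitz constant enters), converts boundary control on $\partial T$ into $\dot W^{\tilde q_+,\tilde\sigma_+}_m(T)$-control. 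H\"older's inequality then produces a geometric bound on the $\dot W^{p,s}_{m,av}(R_k)$-norm. The gap conditions $\tilde\sigma_+-(d-1)/\tilde q_+\leq \sigma_+-(d-1)/q_+$ from \eqref{eqn:q:sigma:3} and $\sigma_--(d-1)/q_-<s-(d-1)/p$ from \eqref{eqn:q:smooth} are precisely what ensures that the resulting geometric bounds sum to a convergent series in~$k$.

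For uniqueness of the solution in $\dot W^{p,s}_{m,av}(\Omega)$, one cannot directly invoke \eqref{eqn:Dirichlet:q+} because a candidate homogeneous solution need not \emph{a priori} lie in the $q_+$-space; instead, the $q_-$-hypothesis \eqref{eqn:Dirichlet:q-}, combined with the Sobolev-type scaling gap in~\eqref{eqn:q:sigma:2} (namely $\sigma_- + (d-1)(1-1/q_-)<\tilde\sigma_+$), lets one show that any trivial-data solution actually lies in $\dot W^{q_-,\sigma_-}_{m,av}(\Omega)$ and therefore vanishes. The Neumann claim is proved in precisely the same manner, using the atomic decomposition of $\dot N\!A^{p}_{m-1,s-1}(\partial\Omega)$ and invoking \eqref{eqn:Neumann:tilde} in place of \eqref{eqn:Dirichlet:tilde} when passing from an atom to its dyadic decay. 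The main obstacle I anticipate is the careful bookkeeping of scaling exponents: one must verify that the powers of~$r$ arising from the atomic size condition, the weight $\dist(x,\partial\Omega)^{p-1-ps}$ in~\eqref{eqn:L:norm:2}, the H\"older application on each Whitney ball, and the rescaling of the Lipschitz subdomains $T$ on which \eqref{eqn:Dirichlet:tilde} is invoked all combine to give exactly the scaling match dictated by \eqref{eqn:q:sigma:3} and~\eqref{eqn:q:smooth}, with no loss that would prevent summability in~$k$.
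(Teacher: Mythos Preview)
Your overall architecture---atomic decomposition of the data, construct a solution per atom, prove decay on dyadic shells via the local estimate \eqref{eqn:Dirichlet:tilde} (or \eqref{eqn:Neumann:tilde}), and sum---is the same as the paper's. Two differences are worth noting, and one of them hides a genuine gap.

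First, a simplification you are missing: the paper does not decompose the boundary datum atomically at all. Instead it invokes an extension lemma (from the companion paper) to reduce immediately to $\arr f=0$ (resp.\ $\arr g=0$), and then decomposes only the interior forcing $\arr H$ into Whitney pieces $\1_Q\arr H$. This avoids having to treat boundary atoms and interior atoms by separate arguments.

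Second, and more substantively, you have the roles of $(q_-,\sigma_-)$ and $(q_+,\sigma_+)$ reversed. You propose to \emph{construct} $\vec u_a$ via the $(q_+,\sigma_+)$-problem and use $(q_-,\sigma_-)$ only for uniqueness. The paper does the opposite: it constructs $\vec u_Q$ as the $(q_-,\sigma_-)$-solution, and the $(q_+,\sigma_+)$-hypothesis is used only to verify that $\vec u_Q\in\dot W^{\widetilde q_+,\widetilde\sigma_+}_{m,av}(T)$ so that \eqref{eqn:Dirichlet:tilde} may legitimately be applied (and, in unbounded $\Omega$, to treat the far-from-boundary region). This matters because the summability condition you correctly identified, $\sigma_--(d-1)/q_-<s-(d-1)/p$, only enters if the shell estimate terminates in the $(q_-,\sigma_-)$-norm of $\vec u_Q$; constructing via $(q_+,\sigma_+)$ gives you no direct access to that norm.

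The real gap is the mechanism producing that decay. Your sentence ``H\"older's inequality then produces a geometric bound'' skips the heart of the argument. After applying \eqref{eqn:Dirichlet:tilde} on a tent $T$ one has control of $\vec u$ by $\doublebar{\Tr_{m-1}^T\vec u}_{\dot W\!A^{\widetilde q_+}_{m-1,\widetilde\sigma_+}(\partial T)}$, and you have not said how to bound \emph{that}. The paper's route is: (i) since $\Tr_{m-1}^\Omega\vec u=0$, the trace is supported on $\partial T\setminus\partial\Omega$, which is interior to $\Omega$; (ii) a Besov-to-$L^q$ lemma bounds the $\dot W\!A$-norm by a tangential gradient, hence by $\doublebar{\nabla^m\vec u}_{L^{\widetilde q_+}}$ on a slightly larger tent after averaging over concentric tents; (iii) H\"older up to $L^2$, then a boundary Caccioppoli inequality drops one derivative; (iv) the Meyers-type reverse H\"older estimate reduces $L^2$ to $L^{q_-}$; (v) a Poincar\'e-on-tents lemma converts $\doublebar{\nabla^{m-1}\vec u}_{L^{q_-}}$ back to a weighted $(q_-,\sigma_-)$-norm of $\nabla^m\vec u$. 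Only after this chain does one obtain a bound with a power of the tent radius $R$ times the \emph{global} $\dot W^{q_-,\sigma_-}_{m,av}(\Omega)$-norm of $\vec u_Q$, and it is precisely the condition in \eqref{eqn:q:smooth} that makes that power of $R$ negative and the dyadic sum converge. Your outline does not contain steps (i)--(v), and without them the ``geometric bound'' is not available.
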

We will define the primary Lipschitz constant of~$\Omega$ in Definition~\ref{dfn:Lipschitz}.

%The method of proof of Theorem~\ref{thm:atoms} is based on that of \cite[Lemma~1.6]{DahK90} and~\cite[Theorem~9.6]{PipV92}. % \cite[Theorem~1.2]{PipV95A} \cite[formula~(8.3)]{Ver96} 
%These papers establish well posedness of the Dirichlet problem with boundary data in the Triebel-Lizorkin space $\dot F^{1,2}_1(\partial\Omega)$; 

%well posedness of the Dirichlet problem with boundary data in $\dot W^2_1(\partial\Omega)$ and $L^p(\partial\Omega)$, $2-\varepsilon<p<2$, was estabi

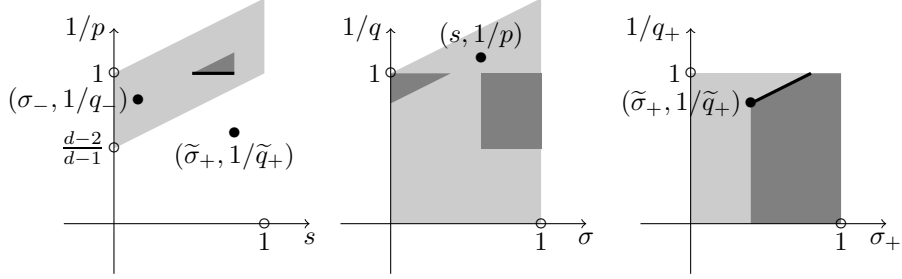
\begin{figure}
\begin{center}

\def\enn{2}

\begin{tikzpicture}[scale=2]
\def\Sm{0.16}
\def\Qm{0.82}
\def\Qt{0.6}
\def\St{0.8}

% Gray background
\fill [white!80!black] (0,1-1/\enn) -- (1,1) -- (1,1+1/\enn) -- (0,1) -- cycle;

% Acceptable region
\fill [white!50!black] (\Sm+\enn-\enn*\Qm,1) -- (\St,1)--(\St,\Qm+\St/\enn-\Sm/\enn) --cycle;
\draw [very thick] (\Sm+\enn-\enn*\Qm,1) -- (\St,1);%(1,\Qt+1/\enn-\St/\enn);

% Axes
\draw [->] (-1/3,0)--(1.3,0) node [below] {$\smooth$};
\draw [->] (0,-1/3)--(0,1.3) node [left] {$1/p$};
\path (0,1) node {$\circ$} node [left] {$1$};
\path (1,0) node {$\circ$} node [below] {$1$};

%\path (1,1+1/\enn) node {$\circ$} node [right] {$(1,\frac{\dmn}{\dmnMinusOne})$};

\path (0,1-1/\enn) node {$\circ$} node [left] {$\frac{\dmn-2}{\dmnMinusOne}$};

\path (\St,\Qt) node {$\bullet$} node [below] {$(\widetilde\sigma_+,1/\widetilde q_+)$};
\path (\Sm,\Qm) node {$\bullet$} node [left] {$(\sigma_- ,1/ q_-)$};

\end{tikzpicture}
\begin{tikzpicture}[scale=2]

\def\P{1.1}
\def\Th{0.6}

% Gray background
\fill [white!80!black] (0,0) -- (1,0) -- (1,1+1/\enn) -- (0,1) -- cycle;

% Acceptable region
\fill [white!50!black] (0,\P-\Th/\enn) -- (0,1) -- (\Th-\enn*\P+\enn,1) -- cycle;
\fill [white!50!black] (\Th,1) -- (1,1)--(1,1/2)--(\Th,1/2) -- cycle;

% Given points
\node at (\Th,\P) {$\bullet$};
\node at (\Th,\P) [above] {$(\smooth,1/p)$};

% Axes
\draw [->] (-1/3,0)--(1.3,0) node [below] {$\sigma$};
\draw [->] (0,-1/3)--(0,1.3) node [left] {$1/q$};
\node at (0,1) {$\circ$}; \node at (0,1) [left] {$1$};
\node at (1,0) {$\circ$}; \node at (1,0) [below] {$1$};

\end{tikzpicture}
\def\Qt{0.8}%
\def\St{0.4}%
\begin{tikzpicture}[scale=2]

% Gray background
\fill [white!80!black] (0,0) -- (1,0) -- (1,1) -- (0,1) -- cycle;

% Acceptable region
\fill [white!50!black] (\St,\Qt) -- (\St+\enn-\enn*\Qt,1) --(1,1)-- (1,0)--(\St,0) --cycle;
\draw [very thick] (\St,\Qt) -- (\St+\enn-\enn*\Qt,1);%(1,\Qt+1/\enn-\St/\enn);

% Axes
\draw [->] (-1/3,0)--(1.3,0) node [below] {$\sigma_+$};
\draw [->] (0,-1/3)--(0,1.3) node [left] {$1/ q_+$};
\path (0,1) node {$\circ$} node [left] {$1$};
\path (1,0) node {$\circ$} node [below] {$1$};

% Given points
\path (\St,\Qt) node {$\bullet$} node [left] {$(\widetilde\sigma_+,1/\widetilde q_+)$};

\end{tikzpicture}
\end{center}
\caption{Theorem~\ref{thm:atoms}: On the left, we show the acceptable values of $(s,1/p)$, given $(\sigma_-,1/q_-)$ and $(\widetilde\sigma_+,1/\widetilde q_+)$. In the middle, we show the acceptable values of $(\sigma_-,1/q_-)$ and $(\widetilde\sigma_+,1/\widetilde q_+)$, given $(s,1/p)$. Finally, on the right, we show the acceptable values of $(\sigma_+,1/q_+)$, given $(\widetilde\sigma_+,1/\widetilde q_+)$.
}
\label{fig:atoms}
\end{figure}

The problems \eqref{eqn:Dirichlet:q-} and~\eqref{eqn:Dirichlet:q+} (or the two problems~\eqref{eqn:Neumann:q-}) are compatibly well posed if, for every $\arr H\in L^{q_-,\sigma_-}_{av}(\Omega)\cap L^{q_+,\sigma_+}_{av}(\Omega)$, there is a single function $\vec u\in \dot W^{q_-,\sigma_-}_{m,av}(\Omega)\cap \dot W^{q_+,\sigma_+}_{m,av}(\Omega)$ that is a solution to both of the problems \eqref{eqn:Dirichlet:q-} and~\eqref{eqn:Dirichlet:q+} (or both of the problems~\eqref{eqn:Neumann:q-}). Compatibility of solutions is not trivial; the main result of \cite{Axe10} is an example of a second order operator $L$ such that the Dirichlet problems
\begin{align*}Lu=0 \text{ in }\R^2_+,\quad \Trace u &= f, \quad \doublebar{u}_{T^p_\infty}\leq \doublebar{f}_{L^p(\partial\R^2_+)}
,\\
Lv=0 \text{ in }\R^2_+,\quad \Trace v &= f, \quad \doublebar{v}_{\dot W^2_1(\R^2_+)}\leq \doublebar{f}_{\dot B^{2,2}_{1/2}(\partial\R^2_+)}\end{align*}
are both well posed, but for which $u\neq v$ for some $f\in {L^p(\partial\R^2_+)}\cap{\dot B^{2,2}_{1/2}(\partial\R^2_+)}$. Here $T^p_\infty$ is the tent space defined in \cite{CoiMS85}.

The proof of Theorem~\ref{thm:atoms} (see in particular Section~\ref{sec:atoms:boundary}) uses many techniques from the proofs of \cite[Lemma~1.6]{DahK90} and~\cite[Theorem~9.6]{PipV92}. 
Both of these papers treat specific operators~$L$ (the Lam\'e system in \cite{DahK90}, the biharmonic operator in \cite{PipV92}) and establish well posedness of the homogeneous Dirichlet problem with boundary data in the Hardy space $\dot H^1_1(\partial\Omega)$; this space is closely related to the $p=1$, $\smooth=1$ endpoint of the spaces $\dot W\!A^p_{m-1,\smooth}(\partial\Omega)$ used in this paper. (\cite[Lemma~1.6]{DahK90} also considered the traction boundary problem, that is, the Neumann problem.)
We have adapted their proofs to the case of inhomogeneous equations $L\vec u=\Div_m\arr H$, and thus to the case of boundary data in fractional smoothness spaces.

We now turn to the history of the boundary value problems 
\begin{alignat}{6}
\label{eqn:Dirichlet:general}
L\vec u&=\Div_m\arr H\text{ in }\Omega,&\>\>\> \Tr_{m-1}\vec u&=\arr f, &\>\>\> \doublebar{\vec u}_\XX&\leq C\doublebar{\arr f}_{\dot W\!A^{p}_{m-1,\smooth}(\partial\Omega)}+C\doublebar{\arr H}_\YY
\\
L\vec u&=\Div_m\arr H\text{ in }\Omega,&\>\>\> \M_{\mat A,\arr H}^\Omega\vec u&=\arr g, &\>\>\> \doublebar{\vec u}_\XX&\leq C\doublebar{\arr g}_{\dot N\!A^{p}_{m-1,\smooth-1}(\partial\Omega)}+C\doublebar{\arr H}_\YY
\end{alignat}
for some appropriate spaces $\XX$ and~$\YY$, with $0<\smooth<1$ and with $p\leq 1$. 

In \cite{MayMit04A}, well posedness was established in the case $L=\Delta$ (that is, for Laplace's and Poisson's equations) in Lipschitz domains for $1-\kappa<\smooth<1$ and $\pdmnMinusOne/(\dmn-2+\kappa+\smooth)<p\leq 1$. In the case $\dmn=2$ they showed that $\kappa>1/2$, but if $\dmn\geq 3$ then they showed only $\kappa>0$. In dimensions $\dmn=2$ and $\dmn=3$, this is the range of $p$, $\smooth$ such that well posedness of the problems~\eqref{eqn:Dirichlet:p:smooth} and~\eqref{eqn:Neumann:p:smooth} may be established via Theorem~\ref{thm:atoms} and earlier results  in \cite{JerK95,FabMM98,Zan00}. If $\dmn\geq 4$, then the results of \cite{MayMit04A} are stronger; in fact in this case Theorem~\ref{thm:atoms} does not yield any well posedness results.

In \cite{MitMW11}, well posedness of the Dirichlet problem \eqref{eqn:Dirichlet:general} for the biharmonic operator $L=\Delta^2$ was established in three-dimensional Lipschitz domains, again for $1-\kappa<s<1$ and $\pdmnMinusOne/(\dmn-2+\kappa+s)<p\leq 1$. In Section~\ref{sec:biharmonic:introduction}, we will establish well posedness results for the two-dimensional biharmonic problems and the three-dimensional Neumann problem, working from Theorem~\ref{thm:atoms} and known results of \cite{MitM13B}; the results for the three-dimensional Dirichlet problem derived in the same way are essentially equivalent to those of \cite{MitMW11}.

If $\Omega$ is a two dimensional $VMO$ domain, then well posedness of the biharmonic Dirichlet problem for $0<\smooth<1$ and $1/(1+\smooth)<p\leq 1$ was established in \cite[Theorem~6.35]{MitM13A}. (The case of  $C^1$ domains was also considered in \cite{MayMit04A}; in that case the parameter $\kappa$ may be taken to be $1$ and so the range of well posedness is again $\pmin<p\leq 1$.)

We turn to the case of variable coefficients. In \cite{BarM16A}, Mayboroda and the author of the present paper investigated well posedness in the case
where $N=m=1$, where $\mat A$ is real and $t$-independent in the sense that $\mat A(x',t)=\mat A(x',s)$ for all $s$, $t\in\R$ and all $x'\in\R^\dmnMinusOne$, and where $\Omega$ is upper half space, or more generally  a Lipschitz graph domain $\Omega=\{(x',t):x'\in\R^\dmnMinusOne,\allowbreak\>t>\psi(x')\}$ for a Lipschitz function~$\psi$.
In this case, the Dirichlet problem \eqref{eqn:Dirichlet:p:smooth} is well posed whenever $1-\kappa<\smooth<1$ and $\pdmnMinusOne/(\dmn-2+\kappa+\smooth)<p\leq 1$, for some $\kappa>0$. If in addition $\mat A$ is symmetric then the Neumann problem is well posed for the same range of $\smooth$ and~$p$. As observed in \cite[Remark~\ref*{P:rmk:BarM16A:2}]{Bar16pA}, if $\mat A$ is symmetric and $\dmn=2$ then $\kappa>1/2$. 

Finally, in \cite{Bar16pA}, we established stability of well posedness under $L^\infty$ perturbation of the coefficients. As a consequence, if $\dmn=3$ and $\mat A$ is close (in $L^\infty$) to the coefficients of the biharmonic operator, or if $m=1$ and $\mat A$ is close to being $t$-independent, then the Dirichlet problem \eqref{eqn:Dirichlet:p:smooth} is well posed for $1-\kappa+\varepsilon<\smooth<1-\varepsilon$ and $\pdmnMinusOne/(\dmn-2+\kappa-\varepsilon+\smooth)<p\leq 1$, for some $0<\varepsilon<\kappa/2$. (Corresponding results are valid for the Neumann problem if $\mat A$ is close to being symmetric.)

%\Delta
%\Delta^2
%Constant coefficients
%\cite{BarM16A}

\subsection{New well posedness results derived from Theorem~\ref{thm:atoms}}
\label{sec:A0:introduction}

In this section we review some known well posedness results from the literature, and we discuss how these well posedness results combine with Theorem~\ref{thm:atoms} to yield new well posedness results.

\subsubsection{Systems with $VMO$ coefficients}
\label{sec:MazMS10:introduction}

In \cite{MazMS10}, Maz'ya, Mitrea and Shaposhnikova investigated the Dirichlet problem for coefficients in (or close to) the space $VMO$. In the terminology of the present paper, their results may be shown to be equivalent to the following. See Section~\ref{sec:A0}.

Suppose that $\mat A$ is bounded and elliptic in the sense of satisfying the conditions \eqref{eqn:elliptic:bounded} and~\eqref{eqn:elliptic:everywhere}, and that $\mat A\in VMO(\R^\dmn)$.
Suppose furthermore that $\Omega\subset\R^\dmn$ is a bounded Lipschitz domain with connected boundary, and that the unit outward normal~$\nu$ to $\partial\Omega$ lies in~$VMO(\partial\Omega)$.
Then the Dirichlet problem \eqref{eqn:Dirichlet:p:smooth}
is well posed for any $0<\smooth<1$ and any $1<p<\infty$.

More generally, let
\begin{multline}
\label{eqn:MazMS10}
\delta(\mat A,\Omega) 
= \lim_{\delta\to 0^+} \sup_{x\in\Omega} \sup_{0<r<\delta} \fint_{B(x,r)\cap\Omega}\fint_{B(x,r)\cap\Omega} \abs{\mat A(y)-\mat A(z)}\,dz\,dy
\\+\lim_{\delta\to 0^+} \sup_{x\in\partial\Omega} \sup_{0<r<\delta} \fint_{B(x,r)\cap\partial\Omega}\fint_{B(x,r)\cap\partial\Omega} \abs{\nu(y)-\nu(z)}\,d\sigma(z)\,d\sigma(y).
\end{multline}
There is some constant $c$ such that, if $1<p<\infty$ and $0<\smooth<1$, and if
\begin{equation*}\delta(\mat A,\Omega) <
%c\frac{\smooth^2(1-\smooth)^2(1/p)(1-1/p)}{\smooth (1-\smooth)+(1/p)(1-1/p)}
c\min(\smooth,1-\smooth,1/p,1-1/p)^2
\end{equation*}
then the Dirichlet problem~\eqref{eqn:Dirichlet:p:smooth} is well posed. 

We remark that by Definition~\ref{dfn:Lipschitz}, if $r$ is small enough then
\begin{equation*}\fint_{B(x,r)\cap\partial\Omega}\fint_{B(x,r)\cap\partial\Omega} \abs{\nu(y)-\nu(z)}\,d\sigma(z)\,d\sigma(y)\leq 2M\end{equation*}
where $M$ is the primary Lipschitz constant of~$\Omega$ mentioned in Theorem~\ref{thm:atoms}.

Combined with Theorem~\ref{thm:atoms} (for $p\leq 1$), \cite[Lemma~\ref*{P:lem:interpolation}]{Bar16pA} (for $1<p<1/(1-\varepsilon)$, and \cite[Lemma~\ref*{P:lem:BVP:duality}]{Bar16pA} (for $1/\varepsilon<p\leq\infty$), we have the following well posedness result.
\begin{thm}\label{thm:MazMS10:p<1}
Fix some $\varepsilon$ with $0<\varepsilon<1/2$. Then there is some $\delta_0>0$ such that, if $\mat A$ is bounded and satisfies the ellipticity condition~\eqref{eqn:elliptic:everywhere}, if $\Omega$ is a bounded simply connected Lipschitz domain with primary Lipschitz constant~$M$, and if 
\begin{equation*}2M+\lim_{\delta\to 0^+} \sup_{x\in\R^\dmn} \sup_{0<r<\delta} \fint_{B(x,r)}\fint_{B(x,r)} \abs{\mat A(y)-\mat A(z)}\,dz\,dy<\delta_0,\end{equation*}
then the Dirichlet problem \eqref{eqn:Dirichlet:p:smooth} is well posed whenever 
\begin{equation*}\varepsilon<\smooth<1-\varepsilon, \quad 0<p\leq \infty,\quad
\frac{\smooth-1+\pdmn\varepsilon}{\dmnMinusOne}<\frac{1}{p} < 1+\frac{\smooth-\pdmn\varepsilon}{\pdmnMinusOne}.\end{equation*}

In particular, if $\mat A\in VMO(\R^\dmn)$ and $\nu\in VMO(\partial\Omega)$, then the Dirichlet problem \eqref{eqn:Dirichlet:p:smooth} is well posed whenever $0<\smooth<1$ and $\pmin<p\leq \infty$.
\end{thm}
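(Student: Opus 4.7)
The plan is to combine the Maz'ya--Mitrea--Shaposhnikova (MMS) well posedness statement preceding the theorem (which covers $1<p<\infty$ under the quadratic smallness assumption on $\delta(\mat A,\Omega)$) with Theorem~\ref{thm:atoms} (for $p\le 1$), the interpolation lemma \cite[Lemma~\ref*{P:lem:interpolation}]{Bar16pA} (for $1<p<1/(1-\varepsilon)$), and the duality lemma \cite[Lemma~\ref*{P:lem:BVP:duality}]{Bar16pA} (for $1/\varepsilon<p\le\infty$) to cover the stated range.

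First, I would fix an auxiliary constant $M_0$ slightly larger than~$M$, and apply the remark following~\eqref{eqn:MazMS10}: for any bounded simply connected Lipschitz domain $T$ with primary Lipschitz constant at most~$M_0$, one has $\delta(\mat A,T)\le 2M_0+(\delta_0-2M)$. Choosing $\delta_0$ sufficiently small and $M_0-M$ correspondingly small, both as functions of~$\varepsilon$, makes this quantity smaller than $c\min(\sigma,1-\sigma,1/q,1-1/q)^2$ throughout the region where $\min(\sigma,1-\sigma,1/q,1-1/q)\ge\varepsilon$. The MMS theorem then yields well posedness of~\eqref{eqn:Dirichlet:q-} and~\eqref{eqn:Dirichlet:q+} on~$\Omega$ as well as on every admissible~$T$, and the interior estimate~\eqref{eqn:Dirichlet:tilde} follows from the uniqueness part of MMS well posedness on~$T$.

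Second, for each target $(\smooth,1/p)$ satisfying the stated inequalities with $0<p\le 1$, I would choose $\sigma_-$ of order~$\varepsilon/2$, then $q_-$ just below the threshold $\pdmnMinusOne/(\dmn-2+\sigma_-)$ so that $\sigma_-+\pdmnMinusOne(1-1/q_-)<\pdmn\varepsilon$, then $\widetilde\sigma_+$ just below~$1-\varepsilon$, and finally $\widetilde q_+=q_+=2$ with $\sigma_+$ close to~$\widetilde\sigma_+$. A direct calculation verifies \eqref{eqn:q:sigma:1}--\eqref{eqn:q:sigma:3} and shows that the output region~\eqref{eqn:q:smooth} contains $(\smooth,1/p)$. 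Applying Theorem~\ref{thm:atoms} with the reference well posedness from the first step yields the conclusion for $p\le 1$. For $p>1$: the range $1/(1-\varepsilon)\le p\le 1/\varepsilon$ is already given by MMS; the interpolation lemma fills in $1<p<1/(1-\varepsilon)$ by interpolating between this range and the $p=1$ endpoint just established; and the duality lemma extends the conclusion to $1/\varepsilon<p\le\infty$.

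For the ``In particular'' conclusion, $\mat A\in VMO(\R^\dmn)$ forces the oscillation term in~\eqref{eqn:MazMS10} to vanish, and $\nu\in VMO(\partial\Omega)$ allows the primary Lipschitz constant~$M$ of~$\Omega$ (which by the remark following~\eqref{eqn:MazMS10} bounds the small-scale oscillation of~$\nu$) to be taken arbitrarily small, so $2M+\mathrm{osc}(\mat A)$ can be made smaller than any prescribed $\delta_0>0$; for each $(\smooth,1/p)$ with $0<\smooth<1$ and $\pmin<p\le\infty$ we pick $\varepsilon\in(0,1/2)$ small enough that $(\smooth,1/p)$ lies in the range of the first part. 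The principal technical obstacle is the parameter tuning in the second step: \eqref{eqn:q:sigma:1} together with $\widetilde\sigma_+<1$ forces $(\sigma_-,1/q_-)$ into a thin sliver of parameter space, and driving $\sigma_--\pdmnMinusOne/q_-$ down to the infimum $\pdmn\varepsilon-\pdmnMinusOne$ of $\smooth-\pdmnMinusOne/p$ over the target region requires $q_-$ to be tuned in a narrow interval whose width shrinks with~$\varepsilon$, which in turn tightens the smallness required on $\delta_0$ and~$M_0-M$.
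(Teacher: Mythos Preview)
Your overall strategy is exactly the paper's: feed the MMS well posedness (at carefully chosen auxiliary exponents) into Theorem~\ref{thm:atoms} to reach $p\le 1$, then fill in $1<p<1/(1-\varepsilon)$ by interpolation and $1/\varepsilon<p\le\infty$ by duality. The paper gives only this one-line sketch and leaves the parameter selection implicit.

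However, your explicit parameter choice contains an error that would make the argument fail. You propose taking $q_-$ ``just below the threshold $\pdmnMinusOne/(\dmn-2+\sigma_-)$'' in order to force $\sigma_-+\pdmnMinusOne(1-1/q_-)<\pdmn\varepsilon$. This is backwards: as $q_-\uparrow \pdmnMinusOne/(\dmn-2+\sigma_-)$ one has $1/q_-\downarrow (\dmn-2+\sigma_-)/\pdmnMinusOne$, hence $\pdmnMinusOne(1-1/q_-)\to 1-\sigma_-$ and $\sigma_-+\pdmnMinusOne(1-1/q_-)\to 1$, which violates condition~\eqref{eqn:q:sigma:2} since $\widetilde\sigma_+<1$. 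The quantity $\sigma_-+\pdmnMinusOne(1-1/q_-)$ (which, as Figure~\ref{fig:atoms} shows, is the left endpoint of the output triangle at $1/p=1$) is made small by sending $q_-$ toward~$1$, not toward the upper threshold. A choice that works is to take $\eta=\min(\varepsilon,1/(2\dmn))$, $\sigma_-=\eta$, $1/q_-=1-\eta$, $\widetilde\sigma_+=\sigma_+=1-\varepsilon$, $\widetilde q_+=q_+=2$; then $\sigma_-+\pdmnMinusOne(1-1/q_-)=\dmn\eta<1-\varepsilon$ so~\eqref{eqn:q:sigma:2} holds, $\sigma_--\pdmnMinusOne/q_-=\dmn\eta-\pdmnMinusOne\le \dmn\varepsilon-\pdmnMinusOne$ so~\eqref{eqn:q:smooth} covers the target triangle, and the constraint $1/q_->(\dmn-2+\sigma_-)/\pdmnMinusOne$ in~\eqref{eqn:q:sigma:1} reduces to $\dmn\eta<1$. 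The MMS smallness hypothesis must then be tuned to $\eta$ rather than~$\varepsilon$, which fixes~$\delta_0$.
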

The acceptable values of $\smooth$ and $p$ are shown in Figure~\ref{fig:MazMS10}.

\begin{figure}
\begin{center}
\begin{tikzpicture}[scale=2]

\def\eps{0.1}
\def\enn{2}

% Gray background
\fill [white!80!black] (0,0) -- (1,0) -- (1,1) -- (0,1) -- cycle;

% Axes
\draw [->] (-1/3,0)--(1.3,0) node [below] {$\smooth$};
\draw [->] (0,-1/3)--(0,1.3) node [left] {$1/p$};

\node at (0,1) {$\circ$}; \node at (0,1) [left] {$1$};
\node at (1,0) {$\circ$}; \node at (1,0) [below] {$1$};

\fill [white!50!black] (\eps,\eps) -- (\eps,1-\eps) -- (1-\eps,1-\eps) -- (1-\eps,\eps) -- cycle ;

\end{tikzpicture}
\begin{tikzpicture}[scale=2]

\def\eps{0.1}
\def\enn{2}

% Gray background
\fill [white!80!black] (0,0) -- (1,0) -- (1,1+1/\enn) -- (0,1) -- cycle;

% Axes
\draw [->] (-1/3,0)--(1.3,0) node [below] {$\smooth$};
\draw [->] (0,-1/3)--(0,1.3) node [left] {$1/p$};

\node at (0,1) {$\circ$}; \node at (0,1) [left] {$1$};
\node at (1,0) {$\circ$}; \node at (1,0) [below] {$1$};
\node at (1,1+1/\enn) {$\circ$}; \node at (1,1+1/\enn) [right] {$(1,\pdmn/\pdmnMinusOne)$};

\fill [white!50!black] (\enn*\eps+\eps,1) -- (1-\eps,1) -- (1-\eps, 1-\eps+1/\enn-2*\eps/\enn) -- cycle;
\draw [very thick] (\enn*\eps+\eps,1) -- (1-\eps,1);

\fill [white!65!black] (\eps,\eps) -- (\eps,1-\eps) -- (1-\eps,1-\eps) -- (1-\eps,\eps) -- cycle ;

\end{tikzpicture}
\begin{tikzpicture}[scale=2]

\def\eps{0.1}
\def\enn{2}

% Gray background
\fill [white!80!black] (0,0) -- (1,0) -- (1,1+1/\enn) -- (0,1) -- cycle;

% Axes
\draw [->] (-1/3,0)--(1.3,0) node [below] {$\smooth$};
\draw [->] (0,-1/3)--(0,1.3) node [left] {$1/p$};

\node at (0,1) {$\circ$}; \node at (0,1) [left] {$1$};
\node at (1,0) {$\circ$}; \node at (1,0) [below] {$1$};

\fill [white!50!black] (\eps,1-\eps) -- (\eps,0) -- (1-\eps-\eps*\enn,0) -- (1-\eps,\eps) -- (1-\eps, 1-\eps+1/\enn-2*\eps/\enn) -- cycle;
\draw [very thick] (\eps,0) -- (1-\eps-\eps*\enn,0);

\end{tikzpicture}
\end{center}
\caption{Theorem~\ref{thm:MazMS10:p<1}: If $\mat A$ is sufficiently close to $VMO$ and $\Omega$ is sufficiently close to a $VMO$ domain, 
well posedness was established in \cite{MazMS10} whenever $(\smooth,1/p)$ lies in the square on the left. Theorem~\ref{thm:atoms} allows us to extrapolate to well posedness whenever $(\smooth,1/p)$ lies in the triangle in the middle, and \cite[Lemmas \ref*{P:lem:BVP:duality} and~\ref*{P:lem:interpolation}]{Bar16pA} allow us to establish well posedness whenever $(\smooth,1/p)$ lies in the pentagon on the right.}\label{fig:MazMS10}
\end{figure}

Recall that in the special case where $L=\Delta$ and $\Omega$ is~$C^1$, or where $L=\Delta^2$ is the biharmonic operator and the ambient dimension $\dmn=2$, this result is very similar to a known result; see \cite[Section~3]{MayMit04A} and
\cite[Theorem~6.35]{MitM13A}.

\subsubsection{The biharmonic equation and perturbations}
\label{sec:biharmonic:introduction}

In \cite{MitM13B}, I.~Mitrea and M. Mit\-rea established well posedness of boundary value problems for the biharmonic operator $\Delta^2$ in bounded Lipschitz domains. Their results improved upon previous results of Adolfsson and Pipher \cite{AdoP98}. They worked with solutions $u$ in Besov and Triebel-Lizorkin spaces. We prefer to work with solutions $u$ in the weighted averaged spaces $\dot W^{p,\smooth}_{2,av}(\Omega)$; thus, we will use the following result (derived from the results of \cite{MitM13B} in \cite[Section~\ref*{P:sec:biharmonic}]{Bar16pA}).

\begin{thm}[{\cite{MitM13B,Bar16pA}}]\label{thm:biharmonic:introduction}
Let $\Omega\subset\R^\dmn$ be a bounded Lipschitz domain with connected boundary. Let $-1/\pdmnMinusOne<\theta<1$. Then there is some $\kappa>0$ depending on~$\Omega$ and $\theta$ such that if $\dmn\geq 4$ and
% $0<\smooth<1$, $0<p\leq\infty$ and
\begin{align}\label{eqn:biharmonic:stripe}
0<\smooth<1,\quad 1< p< \infty, \quad
\frac{1}{2}-\frac{1}{\dmnMinusOne}-\kappa<\frac{1}{p}-\frac{\smooth}{\dmnMinusOne} < \frac{1}{2}+\kappa,
\end{align}
or if $\dmn=2$ or $\dmn=3$ and
\begin{align}
\label{eqn:biharmonic:stripe:2}
0<\smooth<1,\quad 1< p< \infty,\quad
0<\frac{1}{p}-\biggl(\frac{1-\kappa}{2}\biggr) \smooth < \frac{1+\kappa}{2},
\end{align}
then the biharmonic Dirichlet problem
\begin{equation}\label{eqn:Dirichlet:biharmonic}\left\{\begin{aligned} \Delta^2 u &= \Div_2\arr H \quad\text{in }\Omega,\\
\Tr_1^\Omega u &= \arr f ,\\
\doublebar{u}_{\dot W^{p,\smooth}_{2,av}(\Omega)} 
& \leq
C \doublebar{\arr f}_{\dot W\!A^p_{1,\smooth}(\partial\Omega)}
+C \doublebar{\arr H}_{L^{p,\smooth}_{av}(\Omega)}
\end{aligned}\right.
\end{equation}
and the biharmonic Neumann problem
\begin{equation}\label{eqn:Neumann:biharmonic}\left\{\begin{aligned} \Delta^2 u &= \Div_2 \arr H \quad\text{in }\Omega,\\
\M_{\mat A_\theta,\arr H}^\Omega u &= \arr g ,\\
\doublebar{u}_{\dot W^{p,\smooth}_{2,av}(\Omega)} 
& \leq
C \doublebar{\arr g}_{\dot N\!A^p_{1,\smooth-1}(\partial\Omega)}
+C \doublebar{\arr H}_{L^{p,\smooth}_{av}(\Omega)}
\end{aligned}\right.
\end{equation}
are both well posed.
\end{thm}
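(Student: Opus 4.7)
The theorem is a restatement of the biharmonic well-posedness results of I.~Mitrea and M.~Mit\-rea \cite{MitM13B} in the language of the weighted averaged Sobolev spaces used throughout this paper. The translation was carried out in \cite[Section~\ref*{P:sec:biharmonic}]{Bar16pA}, so my plan is to recall the main steps rather than redo the derivation from first principles.

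First I would record the precise Dirichlet and Neumann theorems for $\Delta^2$ from \cite{MitM13B}, stated natively on Besov scales $B^{p,p}_\alpha(\Omega)$ and, where needed, Triebel--Lizorkin scales $F^{p,q}_\alpha(\Omega)$, with boundary data in $B^{p,p}_\smooth(\partial\Omega)$. The admissible range of $(p,\smooth)$ in that paper corresponds exactly to the conditions~\eqref{eqn:biharmonic:stripe} for $\dmn\geq 4$ and~\eqref{eqn:biharmonic:stripe:2} for $\dmn=2,3$; the broader range in low ambient dimensions reflects the additional interior smoothness of biharmonic functions available through Sobolev embedding.

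Next I would invoke the identification, valid for $1<p<\infty$ and $0<\smooth<1$, between the weighted averaged norms $\doublebar{\,\cdot\,}_{\dot W^{p,\smooth}_{m,av}(\Omega)}$ and $\doublebar{\,\cdot\,}_{L^{p,\smooth}_{av}(\Omega)}$ defined in~\eqref{eqn:W:norm:2}--\eqref{eqn:L:norm:2} and the corresponding Triebel--Lizorkin-type norms appearing in \cite{MitM13B}. This identification rests on square-function characterizations via Whitney averages over $B(x,\dist(x,\partial\Omega)/2)$, together with the trace and extension theory developed in \cite{JerK95,AdoP98}. Because the boundary spaces $\dot W\!A^{p}_{m-1,\smooth}(\partial\Omega)$ and $\dot N\!A^{p}_{m-1,\smooth-1}(\partial\Omega)$ arise as traces and duals of the corresponding interior Besov/Triebel--Lizorkin spaces, well-posedness in one setting transfers directly to well-posedness in the other.

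The main point I expect to require care, and which is handled in detail in \cite[Section~\ref*{P:sec:biharmonic}]{Bar16pA}, is the Neumann identification. The operator $\Delta^2$ admits a one-parameter family of symmetric divergence-form representations $\mat A_\theta$ indexed by $-1/\pdmnMinusOne<\theta<1$, with the lower endpoint determined by the requirement that the G\r{a}rding inequality~\eqref{eqn:elliptic:everywhere} hold; the Neumann operator $\M^\Omega_{\mat A_\theta,\arr H}$ defined via~\eqref{dfn:Neumann} varies with $\theta$, and I would match it against the conormal data treated in \cite{MitM13B}. The delicate step is checking that the identification respects the quotient structure of $\dot N\!A^{p}_{m-1,\smooth-1}(\partial\Omega)$ under the equivalence relation $\arr g\sim\arr\gamma$ recalled in the introduction; once this compatibility is verified, the statement of the theorem follows by direct substitution.
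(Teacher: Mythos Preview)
Your proposal is correct and matches the paper's treatment: the paper does not prove this theorem but simply cites it, noting that the well-posedness results of \cite{MitM13B} were translated into the weighted averaged space framework in \cite[Section~\ref*{P:sec:biharmonic}]{Bar16pA}. Your outline of that translation (identifying the Besov/Triebel--Lizorkin norms with the $\dot W^{p,\smooth}_{m,av}$ norms, matching the boundary spaces, and handling the $\theta$-dependent Neumann data) accurately reflects the content of that reference.
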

Here
$\mat A_\theta$ is the symmetric constant coefficient matrix such that 
\begin{equation}
\label{eqn:biharmonic:coefficients}
\langle \nabla^2 \psi(x),\mat A_\theta \nabla^2 \varphi(x)\rangle = 
\theta\, \overline{\Delta \psi(x)}\,\Delta\varphi(x)
+ (1-\theta) \sum_{j=1}^\dmn \sum_{k=1}^\dmn \overline{\partial_j \partial_k \psi(x)}\,\partial_j\partial_k \varphi(x).
\end{equation}

If $\dmn=2$ or $\dmn=3$, then 
Theorems~\ref{thm:atoms} and~\ref{thm:biharmonic:introduction} imply that the problems \eqref{eqn:Dirichlet:biharmonic} and~\eqref{eqn:Neumann:biharmonic} are well posed for all $p\leq 1$ and $\smooth$ satisfying the condition~\eqref{eqn:biharmonic:stripe} (and not only the stronger condition~\eqref{eqn:biharmonic:stripe:2}). As in Section~\ref{sec:MazMS10:introduction}, we may use 
\cite[Lemma~\ref*{P:lem:interpolation}]{Bar16pA} and \cite[Lemma~\ref*{P:lem:BVP:duality}]{Bar16pA} to show that if $\dmn=2$ or $\dmn=3$, then the problems \eqref{eqn:Dirichlet:biharmonic} and~\eqref{eqn:Neumann:biharmonic} are well posed for all $0<p\leq \infty$ and $\smooth$ satisfying the condition~\eqref{eqn:biharmonic:stripe}. That is, we have the following theorem.

\begin{thm}\label{thm:biharmonic:introduction:2}
Let $\Omega\subset\R^\dmn$ be a bounded Lipschitz domain with connected boundary and let $-1/\pdmnMinusOne<\theta<1$. Let $\kappa$ be as in Theorem~\ref{thm:biharmonic:introduction}.

Suppose that $p$ and $\smooth$ satisfy the condition~\eqref{eqn:biharmonic:stripe}. If $\dmn=2$, then the Dirichlet problem~\eqref{eqn:Dirichlet:biharmonic} and the Neumann problem~\eqref{eqn:Neumann:biharmonic} are well posed. If $\dmn=3$, the Neumann problem~\eqref{eqn:Neumann:biharmonic} is well posed.
\end{thm}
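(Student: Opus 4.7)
The proof follows the three-step template applied in Section~\ref{sec:MazMS10:introduction} to the $VMO$ case: start from the base case provided by Theorem~\ref{thm:biharmonic:introduction}, extrapolate to $p\leq 1$ using Theorem~\ref{thm:atoms}, and then close up using \cite[Lemmas \ref*{P:lem:interpolation} and~\ref*{P:lem:BVP:duality}]{Bar16pA} to cover the remaining portion of~\eqref{eqn:biharmonic:stripe}.

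I would first verify the hypotheses of Theorem~\ref{thm:atoms} for $L=\Delta^2$ with coefficients $\mat A_\theta$ from~\eqref{eqn:biharmonic:coefficients}. Boundedness is immediate, while the G\r{a}rding inequalities \eqref{eqn:elliptic:everywhere} and~\eqref{eqn:elliptic:domain} hold precisely for $-1/\pdmnMinusOne<\theta<1$ by the classical computation recalled in~\cite[Section~\ref*{P:sec:biharmonic}]{Bar16pA}. For a target $(\smooth,1/p)$ inside~\eqref{eqn:biharmonic:stripe} with $p\leq 1$, I would then choose bracketing data $(\sigma_-,1/q_-)$ and $(\widetilde\sigma_+,1/\widetilde q_+)$ inside the smaller stripe~\eqref{eqn:biharmonic:stripe:2} of Theorem~\ref{thm:biharmonic:introduction} so that conditions \eqref{eqn:q:sigma:1}--\eqref{eqn:q:smooth} are all satisfied, and analogously a further point $(\sigma_+,1/q_+)$ inside~\eqref{eqn:biharmonic:stripe:2} fulfilling~\eqref{eqn:q:sigma:3}. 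Elementary planar geometry shows that this is possible for every such target when $\dmn=2$ or $\dmn=3$, but fails for $\dmn\geq 4$ because the line $1/p=(\dmn-2)/\pdmnMinusOne$ appearing in the left panel of Figure~\ref{fig:atoms} cuts off too much of the $p\leq 1$ half plane; this is precisely why the theorem is restricted to $\dmn\in\{2,3\}$.

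The remaining and most delicate hypothesis of Theorem~\ref{thm:atoms} is the local/homogeneous estimate~\eqref{eqn:Dirichlet:tilde} (or~\eqref{eqn:Neumann:tilde}) on an arbitrary bounded simply connected Lipschitz domain $T$ with primary Lipschitz constant at most some $M_0>M$. Because $\Delta^2$ has constant coefficients, Theorem~\ref{thm:biharmonic:introduction} applies to any such~$T$ and, specialized to $\arr H=0$, yields the required bound at the exponent pair $(\widetilde\sigma_+,1/\widetilde q_+)$. The one point that requires care is that $\kappa$ in Theorem~\ref{thm:biharmonic:introduction} is stated to depend on~$\Omega$; one must verify, using translation and dilation invariance of $\Delta^2$ together with a standard covering argument, that it in fact depends only on the Lipschitz character of~$T$, and hence is uniform over the family of admissible test domains. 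I expect this uniformity to be the main technical verification of the proof.

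Once everything is arranged, Theorem~\ref{thm:atoms} yields well posedness of~\eqref{eqn:Dirichlet:biharmonic} (for $\dmn=2$) and of~\eqref{eqn:Neumann:biharmonic} (for $\dmn\in\{2,3\}$) throughout the $p\leq 1$ portion of the stripe~\eqref{eqn:biharmonic:stripe}. Applying \cite[Lemma~\ref*{P:lem:interpolation}]{Bar16pA} to interpolate between these new $p\leq 1$ estimates and the $1<p<\infty$ estimates of Theorem~\ref{thm:biharmonic:introduction}, and \cite[Lemma~\ref*{P:lem:BVP:duality}]{Bar16pA} to reach the dual endpoint, covers the rest of~\eqref{eqn:biharmonic:stripe} as depicted schematically in the right panel of Figure~\ref{fig:MazMS10}; this in particular supplies the asserted range $1<p<\infty$. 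The $\dmn=3$ Dirichlet case is omitted from the statement because it is already essentially known from~\cite{MitMW11}.
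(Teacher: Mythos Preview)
Your proposal is correct and follows essentially the same route as the paper: the paper's argument (given in the paragraph immediately preceding the theorem) is simply to feed Theorem~\ref{thm:biharmonic:introduction} into Theorem~\ref{thm:atoms} to reach $p\leq 1$, and then to invoke \cite[Lemmas~\ref*{P:lem:interpolation} and~\ref*{P:lem:BVP:duality}]{Bar16pA} exactly as in Section~\ref{sec:MazMS10:introduction} to fill in the remainder of the stripe~\eqref{eqn:biharmonic:stripe}. Your added remarks on the planar geometry governing the choice of $(\sigma_-,q_-)$, $(\widetilde\sigma_+,\widetilde q_+)$, $(\sigma_+,q_+)$ and on the need for $\kappa$ to be uniform over the family of test domains~$T$ (via dilation invariance of $\Delta^2$) are details the paper leaves implicit but which are indeed the points one must check.
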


To the author's knowledge, the results of Theorem~\ref{thm:biharmonic:introduction:2} are new. Turning to the case of the three-dimensional Dirichlet problem, as mentioned above, well posedness of the Dirichlet problem
\begin{multline}\label{eqn:MitMW11}
\Delta^2 u=h\text{ in }\Omega\subset\R^3,\quad \Tr_1^\Omega u=\arr f, \\ \doublebar{u}_{F^{p,q}_{\smooth+1/p+1}(\Omega)}\leq  C\doublebar{h}_{F^{p,q}_{\smooth+1/p-3}(\Omega)} + C\doublebar{\arr f}_{W\!A^{p}_{1,\smooth}(\partial\Omega)}
\end{multline}
was established by I. Mitrea, M.~Mitrea and Wright in \cite{MitMW11} for  $p$ and $\smooth$ satisfying the condition~\eqref{eqn:biharmonic:stripe}. It is often possible to pass between the ${F^{p,q}_{\smooth+1/p+1}(\Omega)}$-norm and the  ${W^{p,\smooth}_{2,av}(\Omega)}$-norm; see, for example, \cite[Proposition~S]{AdoP98}. Thus, in the three-dimensional Dirichlet case Theorem~\ref{thm:atoms} yields no novel results.

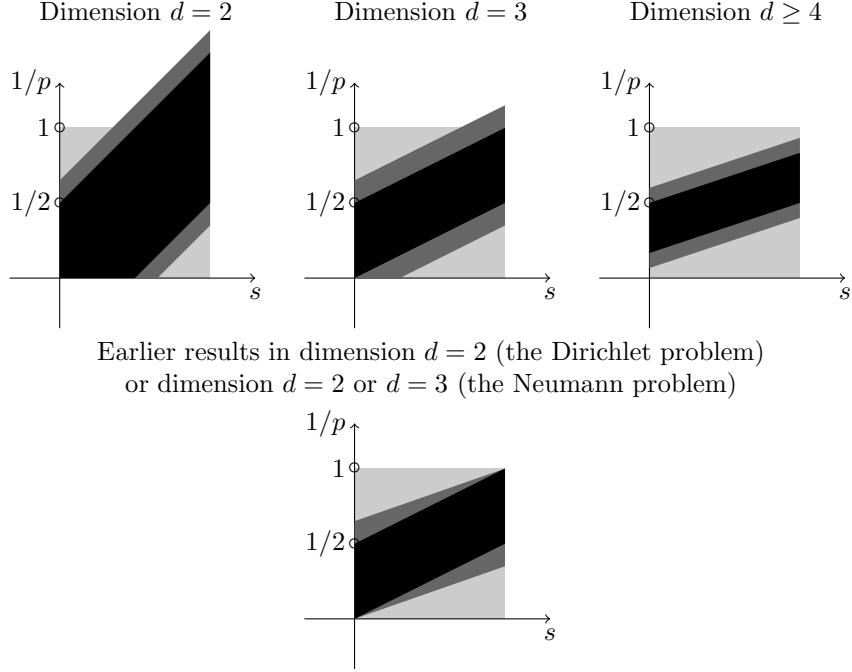
\begin{figure}
\begin{center}
\begin{tabular}{ccc}
Dimension $\dmn=2$ & Dimension $\dmn=3$ & Dimension $\dmn\geq 4$\\
\begin{tikzpicture}[scale=2]

\def\eps{0.3}
\def\enn{3}

% Gray background
\fill [white!80!black] (0,0) rectangle (1,1);

% Axes
\draw [->] (-1/3,0)--(1.3,0) node [below] {$\smooth$};
\draw [->] (0,-1/3)--(0,1.3) node [left] {$1/p$};

% Well posedness region
\fill [white!40!black] 
	(0,0) --
	(1/2+\eps/2,0) --
	(1,1/2-\eps/2) -- 
	(1,3/2+\eps/2) --
	(0,1/2+\eps/2) --
	cycle;

\fill [black] 
	(0,0) 
	-- (1/2,0)
	-- (1,1/2) %node {$\circ$} node [right] {$1/2$} 
	-- (1,3/2) %node {$\circ$} node [right] {$1$} 
	-- (0,1/2) %node {$\circ$} node [left] {$1/2$} 
	-- cycle;

\node at (0,1) {$\circ$}; \node at (0,1) [left] {$1$};
\node at (0,1/2) {$\circ$}; \node at (0,1/2) [left] {$1/2$};

\end{tikzpicture}
&
\begin{tikzpicture}[scale=2]

\def\eps{0.3}
\def\enn{3}

% Gray background
\fill [white!80!black] (0,0) rectangle (1,1);

% Axes
\draw [->] (-1/3,0)--(1.3,0) node [below] {$\smooth$};
\draw [->] (0,-1/3)--(0,1.3) node [left] {$1/p$};

% Well posedness region
\fill [white!40!black] 
	(0,0) --
	(\eps,0) --
	(1,1/2-\eps/2) -- 
	(1,1+\eps/2) --
	(0,1/2+\eps/2) --
	cycle;

\fill [black] 
	(0,0)
	-- (1,1/2) %node {$\circ$} node [right] {$1/2$}
	-- (1,1) %node {$\circ$} node [right] {$1$}
	-- (0,1/2) %node {$\circ$} node [left] {$1/2$}
	-- cycle;

\node at (0,1) {$\circ$}; \node at (0,1) [left] {$1$};
\node at (0,1/2) {$\circ$}; \node at (0,1/2) [left] {$1/2$};

\end{tikzpicture}
&
\begin{tikzpicture}[scale=2]

\def\eps{0.1}
\def\enn{3}

% Gray background
\fill [white!80!black] (0,0) rectangle (1,1);

% Axes
\draw [->] (-1/3,0)--(1.3,0) node [below] {$\smooth$};
\draw [->] (0,-1/3)--(0,1.3) node [left] {$1/p$};

% Well posedness region
\fill [white!40!black] (0,1/2)  
	-- (0,1/2+\eps) 
	-- (1,1/2+1/\enn+\eps) 
	-- (1,1/2-\eps) 
	-- (0,1/2-1/\enn-\eps) 
	;

\fill [black] 
	(0,1/2)  
	-- (0,1/2) %node {$\circ$} node [left] {$1/2$} 
	-- (1,1/2+1/\enn) %node {$\circ$} node [right] {$1/2+1/\pdmnMinusOne$}
	-- (1,1/2) %node {$\circ$} node [right] {$1/2$}
	-- (0,1/2-1/\enn) %node {$\circ$} node [left] {$1/2-1/\pdmnMinusOne$} 
	;

\node at (0,1) {$\circ$}; \node at (0,1) [left] {$1$};
\node at (0,1/2) {$\circ$}; \node at (0,1/2) [left] {$1/2$};

\end{tikzpicture}
\end{tabular}

Earlier results in dimension $\dmn=2$ (the Dirichlet problem) \\ or dimension $\dmn=2$ or $\dmn=3$ (the Neumann problem)

\begin{tikzpicture}[scale=2]

\def\eps{0.3}
\def\enn{3}

% Gray background
\fill [white!80!black] (0,0) rectangle (1,1);

% Axes
\draw [->] (-1/3,0)--(1.3,0) node [below] {$\smooth$};
\draw [->] (0,-1/3)--(0,1.3) node [left] {$1/p$};

% Well posedness region
\fill [white!40!black] 
	(0,0) --
	(0,0) --
	(1,1/2-\eps/2) -- 
	(1,1) --
	(0,1/2+\eps/2) --
	cycle;

\fill [black] 
	(0,0) 
	-- (1,1/2) %node {$\circ$} node [right] {$1/2$} 
	-- (1,1) %node {$\circ$} node [right] {$1$} 
	-- (0,1/2) %node {$\circ$} node [left] {$1/2$} 
	-- cycle;

\node at (0,1) {$\circ$}; \node at (0,1) [left] {$1$};
\node at (0,1/2) {$\circ$}; \node at (0,1/2) [left] {$1/2$};

\end{tikzpicture}

\end{center}
\caption{The Dirichlet and Neumann problems \eqref{eqn:Dirichlet:biharmonic} and \eqref{eqn:Neumann:biharmonic} for the biharmonic operator $\Delta^2$ are well posed whenever the point $(\smooth,1/p)$ lies in the black or dark gray regions. The black region depends only on the dimension $\dmn$; the gray regions also depend on~$\Omega$. If $\dmn\geq3$ (the Dirichlet problem), if $\dmn\geq 4$ (the Neumann problem), or if $(\smooth,1/p)$ lies in the region shown below, then the result is known from \cite{AdoP98,MitMW11,MitM13B}; the extended range in the case $\dmn=2$ or $\dmn=3$ is due to Theorem~\ref{thm:atoms} in the present paper.} \label{fig:MitM13B}\end{figure}

We mention one final class of new well posedness results.
In \cite[Theorem~\ref*{P:thm:biharmonic:perturb}]{Bar16pA}, we established well posedness results for operators close to the biharmonic operator for $p$, $\smooth$ as in Theorem~\ref{thm:biharmonic:introduction}. The same technique allows us to establish perturbative results for $p$, $\smooth$ as in Theorem~\ref{thm:biharmonic:introduction:2}.
\begin{thm}\label{thm:biharmonic:perturb}
Let $N\geq 1$ be an integer, and for each $1\leq j\leq N$, let $\theta_j\in \R$; in the case of the Neumann problem we additionally require $-1/\pdmnMinusOne<\theta_j<1$. Let $\Omega\subset\R^2$ or $\Omega\subset\R^3$ be a bounded simply connected Lipschitz domain,  and let $\kappa_j$ be as in Theorem~\ref{thm:biharmonic:introduction}. Let $0<\delta<\kappa=\min_j\kappa_j$. 

Let $L$ be an operator of the form \eqref{eqn:divergence}, with $m=2$ and defined in the weak sense of formula~\eqref{eqn:weak}, associated to coefficients~$\mat A$.
Then there is some $\varepsilon>0$ such that,  if
\begin{equation*}\sup_{j,\alpha,\beta,x}\abs{A^{jj}_{\alpha\beta}(x)-(A_{\theta_j})_{\alpha\beta}} +\sup_{\substack{{j,k,\alpha,\beta,x}\\j\neq k}}\abs{A^{jk}_{\alpha\beta}(x)}<\varepsilon\end{equation*}
then the Dirichlet problem~\eqref{eqn:Dirichlet:p:smooth}
and the Neumann problem~\eqref{eqn:Neumann:p:smooth}, with $m=2$,
are well posed whenever 
\begin{gather*}
\delta\leq\smooth\leq 1-\delta,\quad
0<p\leq\infty,\quad
\frac{1}{2}-\frac{1}{\dmnMinusOne}-(\kappa-\delta) \leq \frac{1}{p}-\frac{\smooth}{\dmnMinusOne} \leq \frac{1}{2}+(\kappa-\delta)
.\end{gather*}
\end{thm}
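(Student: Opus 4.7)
The plan is to deduce Theorem~\ref{thm:biharmonic:perturb} from Theorem~\ref{thm:atoms} by the same scheme used to derive Theorem~\ref{thm:biharmonic:introduction:2} from Theorem~\ref{thm:biharmonic:introduction}, with the perturbation theorem \cite[Theorem~\ref*{P:thm:biharmonic:perturb}]{Bar16pA} playing, for the perturbed operator $L$, the role that Theorem~\ref{thm:biharmonic:introduction} played for $\Delta^2$.

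First I fix $M_0>M$ and apply \cite[Theorem~\ref*{P:thm:biharmonic:perturb}]{Bar16pA} to $L$ both on $\Omega$ and on every bounded simply connected Lipschitz subdomain $T$ of primary constant at most $M_0$. This produces an $\varepsilon>0$ (depending on $M_0$, $\delta$, and the $\theta_j$) such that, whenever the coefficient bound in the statement of Theorem~\ref{thm:biharmonic:perturb} holds, the Dirichlet and Neumann problems for $L$ are well posed on $\Omega$ and on each such $T$, for every $(\smooth,1/p)$ inside the stripe~\eqref{eqn:biharmonic:stripe} of Theorem~\ref{thm:biharmonic:introduction}, after shrinking it by $\delta/2$ so that $1<p<\infty$ remains. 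Applied on $T$ with vanishing right-hand side, this is exactly the interior estimate~\eqref{eqn:Dirichlet:tilde} (resp.~\eqref{eqn:Neumann:tilde}) for the perturbed $L$, with constants allowed to depend on the full Lipschitz character of $T$, as permitted in Theorem~\ref{thm:atoms}.

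Next I select the parameters entering Theorem~\ref{thm:atoms}: $\widetilde q_+,\widetilde\sigma_+$ with $1<\widetilde q_+\leq 2$ and $\widetilde\sigma_+$ near $1-\delta$; $q_-,\sigma_-$ with $1<q_-\leq 2$ and $\sigma_-$ near $\delta$, taking $(\sigma_-,1/q_-)$ so that the strict inequality $\sigma_-+\pdmnMinusOne(1-1/q_-)<\widetilde\sigma_+$ in~\eqref{eqn:q:sigma:2} holds; and $q_+,\sigma_+$ witnessing a second well-posed point subject to~\eqref{eqn:q:sigma:3}. Such choices exist inside the shrunk stripe~\eqref{eqn:biharmonic:stripe} by elementary geometry (the stripe has positive vertical width and slope $\dmnMinusOne$). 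The previous paragraph verifies all the Dirichlet hypotheses~\eqref{eqn:Dirichlet:q-}, \eqref{eqn:Dirichlet:q+}, and~\eqref{eqn:Dirichlet:tilde} of Theorem~\ref{thm:atoms}, and analogously the Neumann hypotheses. Theorem~\ref{thm:atoms} then yields well posedness of~\eqref{eqn:Dirichlet:p:smooth} and~\eqref{eqn:Neumann:p:smooth} for every $(\smooth,1/p)$ satisfying~\eqref{eqn:q:smooth}; by sweeping $\sigma_\pm,q_\pm,\widetilde\sigma_+,\widetilde q_+$ through the shrunk stripe I cover its entire $p\leq 1$ portion. To reach $1<p\leq\infty$, I then interpolate between the new $p\leq 1$ results and the $p>1$ results of the first paragraph by \cite[Lemma~\ref*{P:lem:interpolation}]{Bar16pA}, and reach the large-$p$ range by the duality lemma \cite[Lemma~\ref*{P:lem:BVP:duality}]{Bar16pA}, exactly as in the passage from Theorem~\ref{thm:biharmonic:introduction} to Theorem~\ref{thm:biharmonic:introduction:2}.

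The main obstacle is the uniformity needed in the first paragraph: one must verify that the perturbation threshold $\varepsilon$ may be chosen consistently as $T$ varies over bounded simply connected Lipschitz domains of primary constant at most $M_0$, and that the resulting well-posedness constants may serve as the $C_1$ of hypothesis~\eqref{eqn:Dirichlet:tilde}. Because \cite[Theorem~\ref*{P:thm:biharmonic:perturb}]{Bar16pA} allows $C_1$ to depend on the full Lipschitz character of $T$, and because the hypothesis~\eqref{eqn:Dirichlet:tilde} of Theorem~\ref{thm:atoms} permits the same, the matter reduces to applying the perturbation theorem on each $T$ separately and tracking the resulting constants; this is essentially bookkeeping, but it is the only piece beyond a direct citation of previous results.
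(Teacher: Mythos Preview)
Your proposal is correct and follows the same approach as the paper. The paper does not give a detailed proof of Theorem~\ref{thm:biharmonic:perturb}; it simply states that ``the same technique'' used to derive Theorem~\ref{thm:biharmonic:introduction:2} from Theorem~\ref{thm:biharmonic:introduction} (namely Theorem~\ref{thm:atoms} for $p\leq 1$, then \cite[Lemmas~\ref*{P:lem:interpolation} and~\ref*{P:lem:BVP:duality}]{Bar16pA} for the remaining range) yields the perturbative result once \cite[Theorem~\ref*{P:thm:biharmonic:perturb}]{Bar16pA} is substituted for Theorem~\ref{thm:biharmonic:introduction}. You have correctly identified and fleshed out this scheme, and your flagging of the uniformity of $\varepsilon$ over the auxiliary domains $T$ is the one genuine point to check---the paper leaves this implicit, but since the proof of Theorem~\ref{thm:atoms} only invokes~\eqref{eqn:Dirichlet:tilde} on the specific tents $T(z,\rho)$ of Definition~\ref{dfn:tent}, whose full Lipschitz characters are uniformly controlled by $M_0$ and the geometry of~$\Omega$, the required uniformity does indeed reduce to bookkeeping.
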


\subsection{Outline of the paper}

The paper is organized as follows. We will prove Theorem~\ref{thm:atoms} in Sections~\ref{sec:preliminaries}--\ref{sec:proof}. In Section~\ref{sec:preliminaries} we will begin the proof of Theorem~\ref{thm:atoms}; we will establish uniqueness of solutions, provide some preliminary arguments, and will construct a solution to $L\vec u=\Div_m\arr\Phi$ in~$\Omega$ provided $\arr\Phi$ is supported in a Whitney ball. In Section~\ref{sec:atoms:boundary} we will bound~$\vec u$; it is this section that contains most of the technical arguments of the paper. In Section~\ref{sec:proof} we will pass from data $\arr \Phi$ supported in a Whitney ball to data $\arr H$ supported in all of~$\Omega$. Finally, in Section~\ref{sec:A0}, we will resolve some differences between well posedness results as stated in the literature, and the well posedness results required by Theorem~\ref{thm:atoms}; the results of Section~\ref{sec:A0} were used in Section~\ref{sec:MazMS10:introduction} above.

\subsection*{Acknowledgements}

The author would like to thank Steve Hofmann and Svitlana Mayboroda for many useful discussions concerning the theory of higher-order elliptic equations. The author would also like to thank the American Institute of Mathematics for hosting the SQuaRE workshop on ``Singular integral operators and solvability of boundary problems for elliptic equations with rough coefficients,'' at which many of these discussions occurred, and the Mathematical Sciences Research Institute for hosting a Program on Harmonic Analysis, during which substantial parts of this paper were written.

\section{Preliminaries}\label{sec:preliminaries}

We will take our notation for multiindices and our definitions of function spaces and Lipschitz domains from \cite{Bar16pB},  and our notions of elliptic operators and well posedness from \cite{Bar16pA}. 
We remark that throughout the paper, $C$ will denote a positive constant whose value may change from line to line. We say that $A\approx B$ if $A\leq CB$ and $B\leq CA$.

There remains to define the primary Lipschitz constant mentioned in Theorem~\ref{thm:atoms}. 

\begin{dfn}\label{dfn:Lipschitz}
If $\Omega$ is a Lipschitz domain, as defined by \cite[Definition~2.2]{Bar16pB}, let $(M,n,c_0)$ be the Lipschitz character of $\Omega$ given therein. We refer to $M$ as the \emph{primary Lipschitz constant} of~$\Omega$.
\end{dfn}

In this section we will begin the proof of Theorem~\ref{thm:atoms}.
We begin by establishing uniqueness of solutions. This follows from well posedness of the problems \eqref{eqn:Dirichlet:q-}, \eqref{eqn:Dirichlet:tilde} or \eqref{eqn:Neumann:q-} and certain embedding and interpolation results of \cite{Bar16pA}.

\begin{lem}\label{lem:unique:atoms} Suppose that the conditions of Theorem~\ref{thm:atoms} are valid. Then for each $\arr H\in L^{p,\smooth}_{av}(\Omega)$ and each $\arr f\in \dot W\!A^p_{m-1,\smooth}(\partial\Omega)$ or $\arr g\in \dot N\!A^p_{m-1,\smooth-1}(\partial\Omega)$, there is at most one solution to the problem~\eqref{eqn:Dirichlet:p:smooth} or~\eqref{eqn:Neumann:p:smooth}.
\end{lem}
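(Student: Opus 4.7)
The plan is to reduce uniqueness at $(p, \smooth)$ to the well-posedness (hence uniqueness) already assumed at $(q_-, \sigma_-)$. Given two solutions of \eqref{eqn:Dirichlet:p:smooth} (respectively \eqref{eqn:Neumann:p:smooth}) sharing the same data, I form their difference $\vec u \in \dot W^{p,\smooth}_{m,av}(\Omega)$; this $\vec u$ satisfies $L\vec u = 0$ in the weak sense~\eqref{eqn:weak} with vanishing boundary data, namely $\Tr_{m-1}^\Omega \vec u = 0$ in $\dot W\!A^{p}_{m-1,\smooth}(\partial\Omega)$ (Dirichlet case) or $\M^\Omega_{\mat A, 0}\vec u = 0$ in $\dot N\!A^{p}_{m-1,\smooth-1}(\partial\Omega)$ (Neumann case). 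The goal is then to show that $\vec u$ represents the zero equivalence class, i.e.\ that $\vec u$ differs by at most a polynomial of degree $m-1$ from zero.

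The key step will be a Sobolev-type embedding of weighted averaged spaces, of the form $\dot W^{p,\smooth}_{m,av}(\Omega) \hookrightarrow \dot W^{q_-,\sigma_-}_{m,av}(\Omega)$. The three inequalities $\sigma_- < \smooth$, $p \leq 1 < q_-$, and $\sigma_- - \dmnMinusOne/q_- < \smooth - \dmnMinusOne/p$ provided by~\eqref{eqn:q:smooth} are exactly the expected embedding conditions; the effective dimension is $\dmnMinusOne$ because the weight $\dist(x,\partial\Omega)^{p-1-p\smooth}$ encodes the boundary trace scaling. I would invoke such an embedding directly from the results of \cite{Bar16pA,Bar16pB}; if it is not stated in the exact form required, it can be recovered by composing the trace/extension theorems relating $\dot W^{p,\smooth}_{m,av}(\Omega)$ to the boundary Besov scale with the standard Besov inclusion $\dot B^{p,p}_\smooth(\partial\Omega) \hookrightarrow \dot B^{q_-,q_-}_{\sigma_-}(\partial\Omega)$, possibly supplemented by the interior estimate~\eqref{eqn:Dirichlet:tilde} applied on Whitney balls to transfer local $L\vec u=0$ regularity. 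Because the trace map and the conormal functional are defined intrinsically from $\vec u$ (in both cases via pairings against the same family of $C^\infty_0(\R^\dmn)$ test functions), the vanishing of $\Tr_{m-1}^\Omega \vec u$ or $\M^\Omega_{\mat A, 0}\vec u$ is preserved under this inclusion.

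Once $\vec u$ has been placed in $\dot W^{q_-,\sigma_-}_{m,av}(\Omega)$ with zero boundary data and zero source $\arr\Phi$, the assumed well-posedness of \eqref{eqn:Dirichlet:q-} (respectively \eqref{eqn:Neumann:q-}) forces $\vec u$ to be equivalent to $0$, which is the desired conclusion. In the unbounded case, the hypothesis that \eqref{eqn:Dirichlet:q-} and \eqref{eqn:Dirichlet:q+} (or the two problems in~\eqref{eqn:Neumann:q-}) are compatibly well posed is precisely what renders the uniqueness claim at $(q_-, \sigma_-)$ unambiguous.

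The main obstacle I anticipate is the embedding step in the quasi-Banach range $p \leq 1$, where the standard dual/interpolation approach to Besov embeddings is unavailable and atomic decomposition arguments are typically required; some care is also needed to ensure that the quotient structure (modding out by polynomials of degree at most $m-1$, especially relevant in the Neumann case) is respected by the inclusion, so that ``zero data and $L\vec u = 0$'' translates cleanly to the trivial element of the target space.
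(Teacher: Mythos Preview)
Your approach---form the difference, embed $\dot W^{p,\smooth}_{m,av}(\Omega)$ into a space where well posedness is already known, and invoke uniqueness there---is exactly the paper's strategy, and for bounded~$\Omega$ your outline matches the paper's proof: the relevant embedding lemma in \cite{Bar16pA} applies directly with target $(q_-,\sigma_-)$, and a uniqueness extrapolation corollary from the same reference finishes.

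The gap is in the unbounded case. An embedding of the form $\dot W^{p,\smooth}_{m,av}(\Omega) \hookrightarrow \dot W^{q_-,\sigma_-}_{m,av}(\Omega)$ with a \emph{strict} inequality $\sigma_- - \pdmnMinusOne/q_- < \smooth - \pdmnMinusOne/p$ generally requires $\Omega$ bounded (compare the usual Sobolev picture: strict gain in integrability at the cost of smoothness is only free on bounded domains). When $\Omega$ is unbounded, the embedding lemma needs equality in the scaling relation. The paper therefore does not embed into $\dot W^{q_-,\sigma_-}_{m,av}(\Omega)$ directly; instead it interpolates between the two assumed well-posed problems at $(q_-,\sigma_-)$ and $(q_+,\sigma_+)$ to produce well posedness at an intermediate point $(q,\sigma)$ chosen so that $\sigma - \pdmnMinusOne/q = \smooth - \pdmnMinusOne/p$ exactly, and then embeds $\dot W^{p,\smooth}_{m,av}(\Omega)$ into $\dot W^{q,\sigma}_{m,av}(\Omega)$. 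This is the actual role of the compatible well posedness hypothesis: it is the input to the interpolation lemma of \cite{Bar16pA}, not a device for disambiguating uniqueness at $(q_-,\sigma_-)$ (which is already unambiguous by assumption).
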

\begin{proof}
If $\Omega$ is bounded, then the numbers $r=p$, $\omega=\smooth$, $q=q_-$ and $\sigma=\sigma_-$ satisfy the conditions of \cite[Lemma~\ref*{P:lem:embedding}]{Bar16pA}. Thus, by \cite[Corollary~\ref*{P:cor:unique:extrapolate}]{Bar16pA}, solutions to the problems \eqref{eqn:Dirichlet:p:smooth} and~\eqref{eqn:Neumann:p:smooth} are unique.

Otherwise, let $q$ and $\sigma$ satisfy $\pdmnMinusOne/q-\sigma=\pdmnMinusOne/p-\smooth$ and $1/q=\theta/q_++(1-\theta)/q_-$, $\sigma=\theta\sigma_++(1-\theta)\sigma_-$ for some $\theta\in\R$. 
See Figure~\ref{fig:twolines}.
By the given bounds on $q_\pm$ and $\sigma_\pm$, we have that $0<\sigma<\smooth$ and $1<q<\infty$, and furthermore $0<\theta<1$.

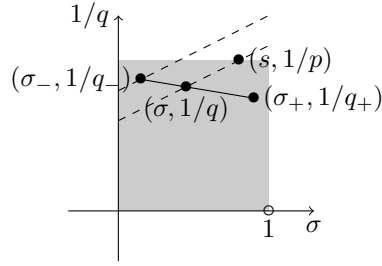
\begin{figure}
\begin{center}
\begin{tikzpicture}[scale=2]

\def\enn{2}

% Gray background
\fill [white!80!black] (0,0) -- (1,0) -- (1,1) -- (0,1) -- cycle;

% Axes
\draw [->] (-1/3,0)--(1.3,0) node [below] {$\sigma$};
\draw [->] (0,-1/3)--(0,1.3) node [left] {$1/q$};
%\node at (0,1) {$\circ$}; \node at (0,1) [left] {$1$};
\node at (1,0) {$\circ$}; \node at (1,0) [below] {$1$};

\def  \YI{0.8}
\def \YII{0.6}
\def\YIII{0.3}

\def\sigmaminus{0.15}
\def\sm{0.8}
\def\sigmaplus{0.9}

\draw [dashed, name path=line 1] (0,\YII) -- +(1,1/\enn);
\draw [dashed] (0,\YI) -- +(1,1/\enn);
%\draw [dashed] (0,\YIII) -- +(1,1/\enn);

\path (\sm,\YII+\sm/\enn) node {$\bullet$} node [right] {$(\smooth,1/p)$}; 
\path (\sigmaminus,\YI+\sigmaminus/\enn) node {$\bullet$} node [left] {$(\sigma_-,1/q_-)$}; 
\path (\sigmaplus,\YIII+\sigmaplus/\enn) node {$\bullet$} node [right] {$(\sigma_+,1/q_+)$}; 

\draw[name path=line 2] (\sigmaminus,\YI+\sigmaminus/\enn) -- (\sigmaplus,\YIII+\sigmaplus/\enn);

\path  [name intersections={of=line 1 and line 2}] (intersection-1) node {$\bullet$} node [below] {$(\sigma,1/q)$};

\end{tikzpicture}
\end{center}
\caption{Lemma~\ref{lem:unique:atoms}: the point $(\sigma,1/q)$ lies on the line through $(\sigma_-,1/q_-)$ and $(\sigma_+,1/q_+)$, and on the line $\sigma-\pdmnMinusOne(1/q)=\smooth-\pdmnMinusOne(1/p)$. The quantity $\sigma-\pdmnMinusOne/q$ is constant on each of the dashed lines. }
\label{fig:twolines}
\end{figure}

By the interpolation result \cite[Lemma~\ref*{P:lem:interpolation}]{Bar16pA}, we have well posedness of the Dirichlet or Neumann problem \eqref{eqn:Dirichlet:p:smooth} or~\eqref{eqn:Neumann:p:smooth} with $p=q$, $\smooth=\sigma$. Observe that $q$, $\sigma$, and $\omega=\smooth$, $r=p$ satisfy the conditions of \cite[Lemma~\ref*{P:lem:embedding}]{Bar16pA} even if $\Omega$ is unbounded. The conclusion follows from \cite[Corollary~\ref*{P:cor:unique:extrapolate}]{Bar16pA}.
\end{proof}

%\begin{rmk} This compatibility condition is not always satisfied for more general boundary value problems. 
%%This is closely tied to the issue of compatible well posedness of boundary value problems discussed in \cite{Axe10,BarM16A,AmeA16p}. 
%In particular, the main result of \cite{Axe10} is an example of a second order operator $L$ and a function space $\XX$ such that the Dirichlet problems
%\begin{gather*}Lu=0 \text{ in }\R^2_+,\quad \Trace u = f, \quad \doublebar{u}_{\XX_0}\leq \doublebar{f}_{L^p(\partial\R^2_+)}
%,\\
%Lv=0 \text{ in }\R^2_+,\quad \Trace v = f, \quad \doublebar{u}_{\XX_1}\leq \doublebar{f}_{\dot B^{2,2}_{1/2}(\partial\R^2_+)}\end{gather*}
%are both well posed but for which $u\neq v$ for some $f\in {L^p(\partial\R^2_+)}\cap{\dot B^{2,2}_{1/2}(\partial\R^2_+)}$.
%\end{rmk}

%\begin{rmk} These two corollaries are the only place in this section where we will use the assumption that $\Omega$ is bounded. Specifically, we need this assumption only to show that 
%\end{rmk}

We now recall the following theorem; this theorem will be useful throughout the paper.
In the interior case $\Omega=B(x,2r)$, the result may be found in \cite{Cam80,AusQ00,Bar16}. The case of Dirichlet boundary values was established in \cite{Bar16}, while the case of Neumann boundary values may be established using \cite[Lemma~5.3]{Tol16p} and the proof of \cite[Theorem~36]{Bar16}. 
\begin{thm}
\label{thm:Meyers} 
Let $L$ be an operator of the form \eqref{eqn:divergence}, defined in the weak sense of formula~\eqref{eqn:weak},  of order $2m$ and associated to coefficients~$\mat A$ that satisfy the bound~\eqref{eqn:elliptic:bounded} and the ellipticity condition~\eqref{eqn:elliptic:everywhere}. Suppose that $L\vec u=\Div_m\arr H$ in $B(x,2r)$ for some $x\in\R^\dmn$ and some $r>0$. If $0\leq j\leq m$, $0<p<\infty$, and $0<q\leq 2$, then
\begin{align*}
%\label{eqn:Meyers}
\biggl(\int_{B(x,r)}\abs{\nabla^j   u}^q\biggr)^{1/q}
&\leq 
	\frac{C}{r^{\pdmn/p-\pdmn/q}}
	\biggl(\int_{B(x,2r)}\abs{\nabla^j   u}^p\biggr)^{1/p} 
	%\\&\qquad%\nonumber
	+ C r^{m-j}\biggl(\int_{B(x,2r)}\abs{\arr H}^q\biggr)^{1/q}
\end{align*}
for some constant $C$ depending only on $p$, $q$, the dimension $\dmn$, the constants $\lambda$ and $\Lambda$ in the bounds \eqref{eqn:elliptic:bounded} and~\eqref{eqn:elliptic:everywhere}, and the Lipschitz character of~$\Omega$.

Let $\Omega\subset\R^\dmn$ be a Lipschitz domain, let $x\in\partial\Omega$, and let $\vec u\in \dot W^2_m(B(x,2r))$ with $L\vec u=0$ in $\Omega\cap B(x,2r)$.
Suppose that either $\mat A$ satisfies the ellipticity condition~\eqref{eqn:elliptic:everywhere} and $\Tr_{k}^\Omega\vec u=0$ on~$\partial\Omega\cap B(x,2r)$ for all $0\leq k\leq m-1$, or $\mat A$ satisfies the ellipticity condition~\eqref{eqn:elliptic:domain} and $\M_{\mat A,0}^\Omega \vec u=0$ on~$\partial\Omega\cap B(x,2r)$.

Then  if $0\leq j\leq m$, $0<p<\infty$, and $0<q\leq 2$, then
\begin{align*}
%\label{eqn:Meyers}
\biggl(\int_{B(x,r)\cap\Omega
}\abs{\nabla^j   u}^q\biggr)^{1/q}
&\leq 
	\frac{C}{r^{\pdmn/p-\pdmn/q}}
	\biggl(\int_{B(x,2r)\cap\Omega}\abs{\nabla^j   u}^p\biggr)^{1/p} 
.\end{align*}
\end{thm}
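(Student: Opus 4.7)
The plan is to prove this Meyers/Gehring-type higher integrability estimate via the standard three-step scheme: first establish a Caccioppoli inequality at the top order, then combine with the Sobolev--Poincar\'e inequality to turn it into a reverse H\"older estimate, and finally apply Gehring's self-improvement lemma to extend the range of exponents. The output exponent can be pushed above $2$, and pulling back down to any $0<q\leq 2$ on the left is then free by H\"older's inequality, while the input exponent $p$ can be driven arbitrarily small by iterating Gehring's lemma. Throughout, $\vec u$ is treated as a weak solution in the sense of \eqref{eqn:weak}, tested against $\eta^{2m}(\vec u-\vec P)$ for suitable cutoffs $\eta$ and polynomials $\vec P$ of degree at most $m-1$.

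For the interior case, I would first derive, for any polynomial $\vec P$ of degree $\leq m-1$,
\begin{equation*}
\int_{B(x,r)} \abs{\nabla^m \vec u}^2
\leq \frac{C}{r^{2m}}\int_{B(x,2r)} \abs{\vec u-\vec P}^2
+ C\int_{B(x,2r)} \abs{\arr H}^2,
\end{equation*}
by expanding $\langle \nabla^m(\eta^{2m}(\vec u-\vec P)),\mat A\nabla^m \vec u\rangle = \langle \nabla^m(\eta^{2m}(\vec u-\vec P)),\arr H\rangle$ via the Leibniz rule and invoking the G{\aa}rding inequality \eqref{eqn:elliptic:everywhere} and the bound \eqref{eqn:elliptic:bounded}, absorbing lower-order commutator terms through Young's inequality. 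Choosing $\vec P$ to match $\vec u$ in the sense that the averages of $\partial^\beta(\vec u-\vec P)$ over $B(x,2r)$ vanish for all $\abs\beta\leq m-1$, and then iterating the higher-order Sobolev--Poincar\'e inequality, yields
\begin{equation*}
\biggl(\fint_{B(x,r)} \abs{\nabla^m \vec u}^2\biggr)^{1/2}
\leq C\biggl(\fint_{B(x,2r)} \abs{\nabla^m \vec u}^{q_0}\biggr)^{1/q_0}
+ C\biggl(\fint_{B(x,2r)} \abs{\arr H}^2\biggr)^{1/2}
\end{equation*}
for some $q_0<2$ (for example $q_0=2\pdmn/(\pdmn+2m)$). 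This is the standard reverse H\"older inequality, so Gehring's lemma self-improves it to $L^{2+\varepsilon}$ on the left with $L^2$ on the right, iteration yields the bound with arbitrary $L^p$ norm of $\arr H$ and $\nabla^m\vec u$ on the right, and the remaining cases $p\geq q$ follow trivially from H\"older. For $0\leq j<m$, one runs the same argument directly at level $j$ using that $\vec u-\vec P$ controls $\nabla^j\vec u$ modulo a polynomial correction, picking up the scaling factor $r^{m-j}$ in the $\arr H$ term from dimensional analysis of the Caccioppoli/Poincar\'e chain; the details are carried out for the scalar, second-order, and general higher-order cases in \cite{Cam80,AusQ00,Bar16} respectively.

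The boundary cases follow the same blueprint, the only new ingredient being a boundary Caccioppoli estimate. In the Dirichlet case, the vanishing of $\Tr_k^\Omega\vec u$ for $0\leq k\leq m-1$ lets us extend $\vec u$ by zero across $\partial\Omega\cap B(x,2r)$ and test against $\eta^{2m}\vec u$ with no polynomial correction, invoking Poincar\'e's inequality for zero-trace Sobolev functions on $B(x,2r)\cap\Omega$; the Lipschitz character of $\Omega$ enters only through the Poincar\'e constants. In the Neumann case the polynomial $\vec P$ cannot simply be dropped, but the boundary term produced by integrating $\langle\nabla^m(\eta^{2m}(\vec u-\vec P)),\mat A\nabla^m\vec u\rangle_\Omega$ against itself is killed by the hypothesis $\M_{\mat A,0}^\Omega\vec u=0$ through the duality pairing \eqref{dfn:Neumann}, which is precisely the content of \cite[Lemma~5.3]{Tol16p}; this is the point at which the local G{\aa}rding inequality \eqref{eqn:elliptic:domain} (rather than the global one) suffices. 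After that the Sobolev--Poincar\'e step and Gehring's lemma go through unchanged.

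The main technical obstacle, as in all higher-order Meyers estimates, lies in the Caccioppoli step itself: one must carefully balance the power $2m$ of the cutoff $\eta$ against the $m$ derivatives falling on $\eta^{2m}(\vec u-\vec P)$ in the leading-order term, control the resulting family of commutator terms involving $\nabla^k\eta\cdot\nabla^{m-k}(\vec u-\vec P)$ by Young's inequality with absorption, and, in the Neumann case, verify that the averaged polynomial $\vec P$ can be chosen so that the boundary pairing against $\M_{\mat A,0}^\Omega\vec u$ still vanishes. All of these ingredients are already present in \cite{Cam80,AusQ00,Bar16,Tol16p}; the proof sketched above simply assembles them in the generality required here.
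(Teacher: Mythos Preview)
Your sketch is correct and follows exactly the approach the paper attributes to the cited references: the paper does not prove Theorem~\ref{thm:Meyers} itself but defers to \cite{Cam80,AusQ00,Bar16} for the interior estimate, to \cite{Bar16} for the Dirichlet boundary case, and to \cite[Lemma~5.3]{Tol16p} combined with the proof of \cite[Theorem~36]{Bar16} for the Neumann boundary case. The Caccioppoli--Poincar\'e--Gehring scheme you outline is precisely what those references carry out, so there is nothing to add.
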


As a consequence we have the following equivalence between the averaged norms $\dot W^{q,\sigma}_{m,av}(T)$ and unaveraged norms.

\begin{lem}\label{lem:unaveraged} Let $T$ be an open set. If $0<q\leq 2$ and $\sigma\in\R$, then 
\begin{equation}
\label{eqn:unaveraged<averaged}
\int_T \abs{\arr\Psi(x)}^q \dist(x,\partial T)^{q-1-q\sigma}\,dx
\leq C\doublebar{\arr\Psi}_{L^{q,\sigma}_{av}(T)}^q
\end{equation}
for all $\arr\Psi\in {L^{q,\sigma}_{av}(T)}$, where $C$ depends only on~$q$ and~$\sigma$.

Conversely, suppose that $\vec u\in \dot W^2_{m,loc}(T)$ (that is, suppose that $\nabla^m\vec u$ is locally square-integrable in~$T$). Let $0<q\leq \infty$ and let $\sigma\in\R$. Let $L$ be an operator of the form \eqref{eqn:divergence}, defined in the weak sense of formula~\eqref{eqn:weak}, associated to coefficients $\mat A$ that satisfy the bounds~\eqref{eqn:elliptic:bounded} and~\eqref{eqn:elliptic:everywhere}. Suppose that $L\vec u=\Div_m\arr H$ in~$T$ for some $\arr H\in L^{q,\sigma}_{av}(\Omega)$. Then
\begin{align*}\doublebar{\vec u}_{\dot W^{q,\sigma}_{m,av}(T)}^q
&=\int_T \biggl(\fint_{B(x,\dist(x,\partial T)/2)} \abs{\nabla^m\vec u}^2\biggr)^{q/2} \dist(x,\partial T)^{q-1-q\sigma}\,dx 
\\&\leq C
\int_T \abs{\nabla^m\vec u(x)}^q \dist(x,\partial T)^{q-1-q\sigma}\,dx
+C\doublebar{\arr H}_{L^{q,\sigma}_{av}(T)}^q
\end{align*}
for some constant $C$ depending only on $q$, $\sigma$ and the quantities mentioned in Theorem~\ref{thm:Meyers}.
\end{lem}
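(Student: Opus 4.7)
For Part 1, my plan is to apply Jensen and then swap the order of integration via Fubini. Since $0<q\leq 2$, convexity of $t\mapsto t^{2/q}$ gives $\fint_{B(x,\dist(x,\partial T)/2)}|\arr\Psi|^q\leq\bigl(\fint_{B(x,\dist(x,\partial T)/2)}|\arr\Psi|^2\bigr)^{q/2}$ at every $x\in T$. Multiplying by $\dist(x,\partial T)^{q-1-q\sigma}$, integrating over $T$, and interchanging the order of integration reduces Part 1 to the geometric estimate
\[
\int_{\{x\in T\,:\,|x-y|<\dist(x,\partial T)/2\}}\frac{\dist(x,\partial T)^{q-1-q\sigma}}{|B(x,\dist(x,\partial T)/2)|}\,dx\geq c\,\dist(y,\partial T)^{q-1-q\sigma}\quad\text{for every }y\in T.
\]
By the triangle inequality, $|x-y|<\dist(x,\partial T)/2$ forces $\dist(x,\partial T)\approx\dist(y,\partial T)$, and the integration region contains the ball $B(y,\dist(y,\partial T)/4)$; the lower bound then follows by a direct computation.

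For Part 2 I would split on the size of $q$. If $q\geq 2$ (including $q=\infty$, interpreted appropriately), convexity of $t\mapsto t^{q/2}$ yields the \emph{opposite} Jensen bound $\bigl(\fint_B|\nabla^m\vec u|^2\bigr)^{q/2}\leq\fint_B|\nabla^m\vec u|^q$ pointwise, after which the Fubini argument from Part 1 closes the proof with no $\arr H$ term needed. If $q<2$ I must use Theorem~\ref{thm:Meyers}. The naive application with inner ball $B(x,r(x))$ and outer ball $B(x,2r(x))=B(x,\dist(x,\partial T))$, where $r(x)=\dist(x,\partial T)/2$, is problematic: the outer ball reaches $\partial T$, and the ensuing Fubini step on the $|\nabla^m\vec u|^q$ term can diverge (in $T=\R^\dmn_+$, for example, whenever $q(1-\sigma)\geq (\dmn+1)/2$, as one sees by a direct computation). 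To avoid this I would first upgrade Theorem~\ref{thm:Meyers} to a flexible-ratio reverse H\"older bound: for each $\varepsilon\in(0,1)$,
\[
\Bigl(\fint_{B(x,r)}|\nabla^m\vec u|^2\Bigr)^{1/2}\leq C(\varepsilon)\Bigl(\fint_{B(x,(1+\varepsilon)r)}|\nabla^m\vec u|^q\Bigr)^{1/q}+C(\varepsilon)\Bigl(\fint_{B(x,(1+\varepsilon)r)}|\arr H|^2\Bigr)^{1/2},
\]
obtained by covering $B(x,r)$ by $\lesssim\varepsilon^{-\dmn}$ balls $B(y_i,\varepsilon r/2)$ with centers $y_i\in B(x,r)$ (so each Meyers outer ball $B(y_i,\varepsilon r)$ lies inside $B(x,(1+\varepsilon)r)$ with bounded multiplicity), applying Theorem~\ref{thm:Meyers} to each, and summing.

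With the flexible reverse H\"older in hand I apply it at every $x\in T$ with $r=r(x)$ and a fixed $\varepsilon\in(0,1)$ (say $\varepsilon=1/2$), so that the outer ball $B(x,(1+\varepsilon)r(x))$ lies in the \emph{good Whitney region} where $\dist(y,\partial T)\approx\dist(x,\partial T)$ throughout. Raising to the $q/2$-th power (using subadditivity of $t\mapsto t^{q/2}$, since $q/2<1$), multiplying by $\dist(x,\partial T)^{q-1-q\sigma}$, and integrating over $x\in T$, the $|\nabla^m\vec u|^q$ contribution is controlled by $\int_T|\nabla^m\vec u|^q\dist(x,\partial T)^{q-1-q\sigma}\,dx$ via the Fubini argument from Part 1 (which now converges because the outer ball is in the good region), while the $|\arr H|^2$ contribution yields an average over enlarged Whitney balls that is comparable to $\|\arr H\|_{L^{q,\sigma}_{av}(T)}^q$ because the averaged $L^{q,\sigma}$-norm is, up to constants, independent of the Whitney-ball radius---this equivalence is an analogous short covering argument. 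The main obstacle is the flexible-ratio upgrade of Theorem~\ref{thm:Meyers}; the essential bookkeeping is that the factor $\varepsilon^{-\dmn}$ from the cardinality of the cover combines with the $(\varepsilon/2)^\dmn$ volume factor from re-averaging to yield an overall constant depending only on $\varepsilon$, $q$, $\dmn$, $\lambda$, and $\Lambda$, which is finite for any fixed $\varepsilon\in(0,1)$.
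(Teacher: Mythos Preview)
Your proposal is correct and follows essentially the same route as the paper: Jensen/H\"older plus a Fubini geometric estimate for Part~1, and the Meyers reverse H\"older bound applied pointwise on Whitney balls for Part~2. The paper's proof is terser---it simply writes the outer ball as $B(x,(3/4)\dist(x,\partial T))$ without comment, thereby sidestepping the boundary-touching issue you identify, and it cites \cite{Bar16pB} for the equivalence of the averaged norm under change of Whitney radius rather than spelling out the covering argument. Your explicit flexible-ratio upgrade of Theorem~\ref{thm:Meyers} via a covering is the standard way to justify that step, and your case split on $q\geq 2$ versus $q<2$ is more careful than the paper (which invokes Meyers for all~$q$ even though Theorem~\ref{thm:Meyers} as stated requires $q\leq 2$; your Jensen argument for $q\geq 2$ fills that gap).
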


\begin{proof}
It is straightforward to establish that
\begin{equation*}
\int_T \abs{\arr\Psi(x)}^q \dist(x,\partial T)^{q-1-q\sigma}\,dx
\approx
\int_T \biggl(\fint_{B(x,a\dist(x,\partial T))} \abs{\arr\Psi}^q\biggr) \dist(x,\partial T)^{q-1-q\sigma}\,dx 
\end{equation*}
for any $0<a<1$, with comparability constants depending only on~$a$, $q$ and~$\sigma$. By H\"older's inequality, if $q\leq 2$ then the inequality~\eqref{eqn:unaveraged<averaged} is valid.

Conversely, by Theorem~\ref{thm:Meyers}, 
\begin{multline*}\biggl(\fint_{B(x,\dist(x,\partial T)/2)} \abs{\nabla^m\vec u}^2\biggr)^{1/2}
\leq C\biggl(\fint_{B(x,(3/4)\dist(x,\partial T))} \abs{\nabla^m\vec u}^q\biggr)^{1/q}
\\+C\biggl(\fint_{B(x,(3/4)\dist(x,\partial T))} \abs{\arr H}^2\biggr)^{1/2}.\end{multline*}
It may easily be shown (see \cite[Section~3]{Bar16pB}) that
\begin{equation*}\doublebar{\arr H}_{L^{q,\sigma}_{av}(T)}^q\approx\int_T \biggl(\fint_{B(x,a\dist(x,\partial T))} \abs{\arr H}^2\biggr)^{q/2} \dist(x,\partial T)^{q-1-q\sigma}\,dx \end{equation*}
for any $0<a<1$,
and so the proof is complete.
\end{proof}

We now establish existence of solutions. By \cite[Lemma~\ref*{P:lem:zero:boundary}]{Bar16pA}, we need only consider the case of homogeneous boundary data $\arr f=0$ or~$\arr g=0$.

We begin by considering boundary value problems for the differential equation $L\vec u=\Div_m\arr\Phi$, where $\arr\Phi$ is supported in a Whitney ball.

\begin{lem}\label{lem:atoms:-}
Let $L$, $\Omega$, $p$, $\smooth$, $q_-$, and $\sigma_-$ satisfy the conditions of Theorem~\ref{thm:atoms}. Let $\arr\Phi\in L^2(\Omega)$ and suppose that $\supp \arr\Phi\subset B(x_0,\dist(x_0,\partial\Omega)/2)$ for some $x_0\in\Omega$.

Let $\vec u \in {\dot W^{q_-,\sigma_-}_{m,av}(\Omega)}$ be the solution to the problem \eqref{eqn:Dirichlet:q-} or~\eqref{eqn:Neumann:q-}. Let $0<h<1$, and let $\Gamma(x_0)=\Gamma_h(x_0)=\{x\in\Omega: \abs{x_0-x}<\frac{1}{h}\dist(x,\partial\Omega)\}$.

Then $\vec u$ satisfies the estimate 
\begin{equation*}\int_{\Gamma(x_0)} \abs{\nabla^m\vec u(x)}^p\dist(x,\partial\Omega)^{p-1-p\smooth}\,dx
\leq 
C\doublebar{\arr\Phi}_{L^{p,\smooth}_{av}(\Omega)}^p
\end{equation*}
where $C$ depends on $h$, $p$, $\smooth$, $q_-$, $\sigma_-$ and the standard parameters.
\end{lem}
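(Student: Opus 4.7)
\textbf{Proof plan for Lemma~\ref{lem:atoms:-}.} The strategy is to treat $\arr\Phi$ as an atom in the Whitney ball $B_0 = B(x_0, r_0/2)$ with $r_0 = \dist(x_0,\partial\Omega)$, and to pass from the $L^{q_-,\sigma_-}$-solvability of the homogeneous-boundary problem to a weighted $L^p$-control of $\nabla^m\vec u$ on the non-tangential region $\Gamma_h(x_0)$ by H\"older's inequality on dyadic Whitney annuli. Two scaling exponents will play off against each other in a way that is dictated precisely by the hypothesis $\sigma_--\pdmnMinusOne/q_-<\smooth-\pdmnMinusOne/p$ in \eqref{eqn:q:smooth}.

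\emph{Step 1 (atomic scaling).} Because $\supp\arr\Phi\subset B_0$, for any $x\in\Omega$ with $B(x,\dist(x,\partial\Omega)/2)\cap B_0\neq\emptyset$ we have $\dist(x,\partial\Omega)\approx r_0$, so a direct evaluation of the norms in formula~\eqref{eqn:L:norm:2} gives
\begin{equation*}
\doublebar{\arr\Phi}_{L^{q_-,\sigma_-}_{av}(\Omega)}
\leq C\,r_0^{\alpha}\,\doublebar{\arr\Phi}_{L^{p,\smooth}_{av}(\Omega)},\qquad
\alpha=\tfrac{\pdmnMinusOne}{q_-}-\sigma_-
-\tfrac{\pdmnMinusOne}{p}+\smooth,
\end{equation*}
and $\alpha>0$ by the last inequality in~\eqref{eqn:q:smooth}. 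By hypothesis the solution $\vec u$ of~\eqref{eqn:Dirichlet:q-} (or~\eqref{eqn:Neumann:q-}) exists with $\doublebar{\vec u}_{\dot W^{q_-,\sigma_-}_{m,av}(\Omega)}\leq C_0\doublebar{\arr\Phi}_{L^{q_-,\sigma_-}_{av}(\Omega)}$, and since $q_-\leq 2$ Lemma~\ref{lem:unaveraged} gives the unaveraged control
\begin{equation*}
\int_\Omega\abs{\nabla^m\vec u(x)}^{q_-}\dist(x,\partial\Omega)^{q_--1-q_-\sigma_-}\,dx
\leq C\doublebar{\arr\Phi}_{L^{q_-,\sigma_-}_{av}(\Omega)}^{q_-}.
\end{equation*}

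\emph{Step 2 (dyadic decomposition and H\"older).} For $x\in\Gamma_h(x_0)$ the triangle inequality $r_0\leq\abs{x_0-x}+\dist(x,\partial\Omega)$ combined with $\abs{x_0-x}<\dist(x,\partial\Omega)/h$ forces $\dist(x,\partial\Omega)\geq \frac{h}{h+1}r_0$; hence we may split
\begin{equation*}
\Gamma_h(x_0)=\bigcup_{k\geq k_0}E_k,\qquad
E_k=\Gamma_h(x_0)\cap\{2^kr_0\leq\dist(\,\cdot\,,\partial\Omega)<2^{k+1}r_0\},
\end{equation*}
with $k_0=k_0(h)$. On $E_k$ we have $\dist\approx 2^kr_0$, and the cone condition yields $\abs{E_k}\leq Ch^{-\pdmn}(2^kr_0)^\pdmn$. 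Since $p\leq 1<q_-$, H\"older's inequality with exponents $q_-/p$ and its conjugate gives
\begin{equation*}
\int_{E_k}\abs{\nabla^m\vec u}^p\,dx
\leq \abs{E_k}^{1-p/q_-}\biggl(\int_{E_k}\abs{\nabla^m\vec u}^{q_-}\,dx\biggr)^{p/q_-}.
\end{equation*}
Multiplying by $\dist^{p-1-p\smooth}\approx(2^kr_0)^{p-1-p\smooth}$, pulling $\dist^{q_--1-q_-\sigma_-}\approx(2^kr_0)^{q_--1-q_-\sigma_-}$ out of the inner integral, and using Step~1, we obtain after elementary algebra
\begin{equation*}
\int_{E_k}\abs{\nabla^m\vec u}^p\dist^{p-1-p\smooth}\,dx
\leq C\,(2^kr_0)^{\beta}\,\doublebar{\arr\Phi}_{L^{q_-,\sigma_-}_{av}(\Omega)}^p,\qquad
\beta=\pdmnMinusOne+p\Bigl(\sigma_--\smooth-\tfrac{\pdmnMinusOne}{q_-}\Bigr).
\end{equation*}

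\emph{Step 3 (summation and cancellation).} A direct check gives $\beta=-p\alpha$, so $\beta<0$ by the same inequality from~\eqref{eqn:q:smooth} used in Step~1. The geometric series $\sum_{k\geq k_0}2^{k\beta}$ converges, hence
\begin{equation*}
\int_{\Gamma_h(x_0)}\abs{\nabla^m\vec u}^p\dist^{p-1-p\smooth}\,dx
\leq C\,r_0^{\beta}\,\doublebar{\arr\Phi}_{L^{q_-,\sigma_-}_{av}(\Omega)}^p
\leq C\,r_0^{\beta+p\alpha}\,\doublebar{\arr\Phi}_{L^{p,\smooth}_{av}(\Omega)}^p
=C\,\doublebar{\arr\Phi}_{L^{p,\smooth}_{av}(\Omega)}^p,
\end{equation*}
which is the desired estimate. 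The same argument works verbatim in the Neumann case, invoking~\eqref{eqn:Neumann:q-} in place of~\eqref{eqn:Dirichlet:q-}.

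\emph{Main obstacle.} The only step that is not a routine computation is verifying that the two exponents produced by the two norm-scalings (the gain $r_0^{p\alpha}$ from passing from the $L^{p,\smooth}_{av}$-norm of a Whitney-localized atom to its $L^{q_-,\sigma_-}_{av}$-norm, and the loss $r_0^{\beta}$ incurred by H\"older and summation) cancel exactly; this is forced by the structural constraint $\beta+p\alpha=0$, which in turn is equivalent to the hypothesis $\sigma_--\pdmnMinusOne/q_-<\smooth-\pdmnMinusOne/p$ of~\eqref{eqn:q:smooth}. All dependence on $h$ is absorbed in the constant $C$ via the bound $\abs{E_k}\leq Ch^{-\pdmn}(2^kr_0)^\pdmn$.
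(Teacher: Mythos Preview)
Your proof is correct and follows essentially the same approach as the paper's: both use H\"older's inequality to pass from $L^p$ to $L^{q_-}$ on the cone $\Gamma_h(x_0)$, exploit the cone geometry to control the resulting ``junk'' weight, and then invoke the atomic scaling of $\arr\Phi$ (formula~\eqref{eqn:whitney:q:sigma}) so that the exponents cancel. The only cosmetic difference is that the paper applies H\"older once over the whole cone and evaluates the extra weighted integral directly, whereas you slice into dyadic annuli $E_k$ and sum a geometric series; these are equivalent realizations of the same computation.
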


\begin{proof} 
We have that $p\leq 2$. Thus, by the bound~\eqref{eqn:unaveraged<averaged},
\begin{equation*}\int_{\Gamma(x_0)} \abs{\nabla^m\vec u(x)}^p\dist(x,\partial\Omega)^{p-1-p\smooth}\,dx
\leq \doublebar{\1_{\Gamma(x_0)} \nabla^m \vec u}_{L^{p,\smooth}_{av}(\Omega)}^p
.\end{equation*}
%where $B(x,\Omega)=B(x,\dist(x,\partial\Omega)/2)$.

If $x\in \Omega$ and $B(x,\dist(x,\partial\Omega)/2)\cap \Gamma(x_0)\neq \emptyset$, then there is some $y$ such that
\begin{align*}\abs{x-x_0}&<\abs{x_0-y}+\frac{1}{2}\dist(x,\partial\Omega)
<
	\frac{1}{h}\dist(y,\partial\Omega) +\frac{1}{2}\dist(x,\partial\Omega)
\\&\leq
	\frac{1}{h}(\dist(x,\partial\Omega)+\abs{x-y}) +\frac{1}{2}\dist(x,\partial\Omega)
<\frac{3+h}{2h}\dist(x,\partial\Omega)
\end{align*}
and so $x\in\widetilde \Gamma(x_0)=\Gamma_{2h/(3+h)}(x_0)$.

Because $q_->p$, we may use H\"older's inequality to see that
\begin{multline*}
\doublebar{\1_{\Gamma(x_0)} \nabla^m \vec u}_{L^{p,\smooth}_{av}(\Omega)}^p
\\\leq
\biggl(\int_{\widetilde\Gamma(x_0)} \biggl(\fint_{B(x,\dist(x,\partial\Omega)/2)}\abs{\nabla^m\vec u}^2\biggr)^{q_-/2}\dist(x,\partial\Omega)^{q_--1-q_-\sigma_-}\, dx 
\biggr)^{p/q_-}
\\\times
\biggl(
\int_{\widetilde\Gamma(x_0)}\dist(x,\partial\Omega)^{(\sigma_--\smooth)pq_-/(q_--p)-1}\, dx 
\biggr)^{1-p/q_-}
.\end{multline*}
The exponent ${(\sigma_--\smooth)pq_-/(q_--p)-1}$ is negative, and in particular is less than $-\pdmn$. Applying the definition of $\widetilde\Gamma(x_0)$ yields that (for $\delta=\dist(x_0,\partial\Omega)$),
{\multlinegap=0pt\begin{multline*}
\int_{\widetilde\Gamma(x_0)}\dist(x,\partial\Omega)^{(\sigma_--\smooth)pq_-/(q_--p)-1}\, dx 
\\\leq
C\delta^{\dmnMinusOne+(\sigma_--\smooth)pq_-/(q_--p)}+
C\int_{\R^\dmn\setminus B(x_0,\delta/2)} (h\abs{x-x_0})^{(\sigma_--\smooth)pq_-/(q_--p)-1}\, dx 
\\\leq 
C\delta^\dmn(h\delta)^{(\sigma_--\smooth)pq_-/(q_--p)-1}
.\end{multline*}}%

By assumption, $\doublebar{\vec u}_{\dot W^{q_-,\sigma_-}_{m,av}(\Omega)}\leq C\doublebar{\arr\Phi}_{L^{q_-,\sigma_-}_{av}(\Omega)}$.
But because $\arr\Phi$ is supported in $B(x_0,\delta/2)$, we have that
\begin{equation}
\label{eqn:whitney:q:sigma}
\doublebar{\arr\Phi}_{L^{q,\sigma}_{av}(\Omega)}\approx \delta^{1-\pdmn/2+\pdmnMinusOne/q-\sigma}
\doublebar{\arr\Phi}_{L^2(B(x_0,\delta/2))}
\end{equation}
for any $\sigma\in\R$ and any $0<q\leq \infty$, and so
\begin{equation*}
\int_{\Gamma(x_0)} \abs{\nabla^m\vec u(x)}^p\dist(x,\partial\Omega)^{p-1-p\smooth}\, dx 
\leq C(h)\doublebar{\arr\Phi}_{L^{p,\smooth}_{av}(\Omega)}^p
\end{equation*}
as desired.
\end{proof}

We now come to the remaining region.
\begin{lem}\label{lem:atoms:boundary}
Suppose that the conditions of Theorem~\ref{thm:atoms} are valid and let $p$, $\smooth$ be as in Theorem~\ref{thm:atoms}.
Let $\arr\Phi$ and $\vec u$ be as in Lemma~\ref{lem:atoms:-}.

If $h>0$ is small enough (depending only on the Lipschitz character of~$\Omega$ and not on the choice of~$x_0$), then
\begin{equation}\label{eqn:atoms:boundary}
\int_{\Omega\setminus\Gamma(x_0)} \abs{\nabla^m\vec u(x)}^p\dist(x,\partial\Omega)^{p-1-p\smooth}\, dx 
\leq C(h)\doublebar{\arr\Phi}_{L^{p,\smooth}_{av}(\Omega)}^p
.\end{equation}
\end{lem}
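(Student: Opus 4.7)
The plan is to reduce to a situation where $L\vec u=0$ on $\Omega\setminus\Gamma(x_0)$ and then apply the hypothesized boundary regularity \eqref{eqn:Dirichlet:tilde} (or~\eqref{eqn:Neumann:tilde}) on a family of sawtooth subdomains covering the complement of the cone. Set $\delta=\dist(x_0,\partial\Omega)$. A direct calculation shows that whenever $h\leq 1$ we have $B(x_0,\delta/2)\subset\Gamma(x_0)$: for $y\in B(x_0,\delta/2)$ we have $\dist(y,\partial\Omega)>\delta/2$ so $|x_0-y|<\delta/2\leq(1/h)\dist(y,\partial\Omega)$. Since $\supp\arr\Phi\subset B(x_0,\delta/2)$, this gives $L\vec u=0$ on all of $\Omega\setminus\Gamma(x_0)$ and, in the Dirichlet (resp.\ Neumann) case, also $\Tr_{m-1}^\Omega\vec u=0$ (resp.\ $\M_{\mat A,0}^\Omega\vec u=0$) on the adjacent portion of~$\partial\Omega$.

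Next I would build a dyadic, Whitney-type decomposition. For each integer $j\geq 0$ let $A_j=(\Omega\setminus\Gamma(x_0))\cap\{2^j\delta\leq|x-x_0|\leq 2^{j+1}\delta\}$, and construct a bounded simply connected Lipschitz sawtooth domain $T_j\subset\Omega$ covering $A_j$, with diameter comparable to $2^j\delta$, such that $\partial T_j\cap\partial\Omega$ is a nontrivial portion of $\partial\Omega$ (where the boundary data of $\vec u$ vanishes), $T_j\cap B(x_0,\delta/2)=\emptyset$ (so that $L\vec u=0$ in $T_j$), and the primary Lipschitz constant of $T_j$ is at most $M_0$. Here the smallness of $h$ is used in two ways: it controls the opening of the cone $\Gamma(x_0)$ so that the interior face $\partial T_j\cap\Omega$ stays well inside the cone $\Gamma(x_0)$, and it keeps the Lipschitz character of $T_j$ close to that of $\Omega$, so that choosing $h$ small enough (depending only on $M_0-M$) guarantees the required Lipschitz constant bound.

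Then I would apply the regularity estimate \eqref{eqn:Dirichlet:tilde} (or~\eqref{eqn:Neumann:tilde}) on each $T_j$. Because the Dirichlet trace (resp.\ Neumann data) vanishes on $\partial T_j\cap\partial\Omega$, the right-hand side reduces to a $\dot W\!A^{\widetilde q_+}_{m-1,\widetilde\sigma_+}$-norm (resp.\ $\dot N\!A^{\widetilde q_+}_{m-1,\widetilde\sigma_+-1}$-norm) supported on the interior face $\Gamma_j=\partial T_j\cap\Omega$. By a standard Besov trace inequality on Lipschitz surfaces combined with Theorem~\ref{thm:Meyers} applied to Whitney balls sitting inside $\Omega\setminus B(x_0,\delta/2)$, I can bound this trace by the local $L^{q_-}$-norm of $\nabla^m\vec u$ on a Whitney enlargement of $\Gamma_j$. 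That enlargement lies in the cone $\Gamma(x_0)$, so the argument of Lemma~\ref{lem:atoms:-} (in its natural $q_-$ version together with \eqref{eqn:Dirichlet:q-} or~\eqref{eqn:Neumann:q-}) bounds it by a local $\dot W^{q_-,\sigma_-}_{m,av}$-piece, and hence, via~\eqref{eqn:whitney:q:sigma}, by a power of $2^j\delta$ times $\|\arr\Phi\|_{L^2(B(x_0,\delta/2))}$.

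Finally I convert the $\dot W^{\widetilde q_+,\widetilde\sigma_+}_{m,av}(T_j)$-bound into the desired weighted $L^p$-integral over $T_j$ via Sobolev embedding between weighted averaged spaces. This embedding is available because the hypotheses force $s-\pdmnMinusOne/p<\widetilde\sigma_+-\pdmnMinusOne/\widetilde q_+$: one has $\widetilde\sigma_+-\pdmnMinusOne>\sigma_--\pdmnMinusOne/q_-$ by the lower bound on $\widetilde\sigma_+$ in~\eqref{eqn:q:sigma:2}, while~\eqref{eqn:q:smooth} gives $s-\pdmnMinusOne/p>\sigma_--\pdmnMinusOne/q_-$, and the constraints $p\leq 1$, $\widetilde q_+\leq 2$ supply $\pdmnMinusOne/p\geq\pdmnMinusOne\geq\pdmnMinusOne/\widetilde q_+$. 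Summing over $j$ yields a geometric series in $2^{-j\alpha}$ with $\alpha=(\widetilde\sigma_+-\pdmnMinusOne/\widetilde q_+)-(s-\pdmnMinusOne/p)>0$, and combining with~\eqref{eqn:whitney:q:sigma} (this time at the exponents $p$, $s$) produces the desired bound by $\|\arr\Phi\|_{L^{p,s}_{av}(\Omega)}^p$. The main obstacle is Step~2: the explicit construction of the sawtooths $T_j$ with primary Lipschitz constant controlled uniformly by $M_0$, together with the book-keeping of the various scaling exponents (mixing $\pdmn/q_--\sigma_-$, $\pdmn/\widetilde q_+-\widetilde\sigma_+$ and $\pdmn/p-s$) so that~\eqref{eqn:q:sigma:1}--\eqref{eqn:q:smooth} are used exactly once each to close the geometric sum.
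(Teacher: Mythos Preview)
Your overall strategy---cover $\Omega\setminus\Gamma(x_0)$ by tent-type subdomains with primary Lipschitz constant at most $M_0$, apply the boundary regularity \eqref{eqn:Dirichlet:tilde} or \eqref{eqn:Neumann:tilde} on each, and sum a geometric series---is exactly the paper's approach. But several steps of your sketch are either wrong or miss essential ingredients.

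First, you never use the $(q_+,\sigma_+)$ problem. The paper needs it twice: once to handle the case $\dist(x_0,\partial\Omega)\geq\diam\partial\Omega$ (unbounded $\Omega$ with bounded boundary), and, more importantly, to verify the \emph{qualitative} hypothesis $\vec u\in\dot W^{\widetilde q_+,\widetilde\sigma_+}_{m,av}(T_j)$ without which \eqref{eqn:Dirichlet:tilde}/\eqref{eqn:Neumann:tilde} cannot be invoked. A priori $\vec u$ is only known to lie in $\dot W^{q_-,\sigma_-}_{m,av}(\Omega)$; the paper uses compatibility with the $(q_+,\sigma_+)$ solution, an embedding lemma, and an averaging over a one-parameter family $T_\mu$ of tents to get finiteness for almost every~$\mu$.

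Second, your geometric claim that the interior face $\partial T_j\cap\Omega$ (and a Whitney thickening of it) lies inside the cone $\Gamma(x_0)$ is false. Any tent covering a piece of $\Omega\setminus\Gamma(x_0)$ must have its lateral boundary running down toward $\partial\Omega$; points there are far from $x_0$ and close to $\partial\Omega$, hence outside the cone. The paper never localizes back to $\Gamma(x_0)$; it bounds the relevant norm by the \emph{global} $\|\vec u\|_{\dot W^{q_-,\sigma_-}_{m,av}(\Omega)}$.

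Third, and most seriously, your exponent bookkeeping does not close. The embedding from $\dot W^{\widetilde q_+,\widetilde\sigma_+}_{m,av}(T_j)$ to the weighted $L^p$ integral contributes a factor $R_j^{p\alpha}$ with your $\alpha>0$, i.e.\ \emph{growth} in $R_j$, not decay. The decay must come from converting the trace bound back to the weighted $L^{q_-,\sigma_-}$ norm, and the resulting exponent is $(d-1)(1-p/q_-)+p(\sigma_--s)$, negative precisely by the condition $s-(d-1)/p>\sigma_--(d-1)/q_-$ in \eqref{eqn:q:smooth}. This conversion is not a ``standard Besov trace inequality'': the paper bounds the $\dot W\!A^{\widetilde q_+}_{m-1,\widetilde\sigma_+}$ norm by $R^{1-\widetilde\sigma_+}\|\nabla^m\vec u\|_{L^{\widetilde q_+}(\partial T_\mu)}$, averages in $\mu$ to a solid $L^{\widetilde q_+}$ norm, then chains H\"older, the boundary Caccioppoli inequality (to drop to $\nabla^{m-1}\vec u$), Meyers' estimate (to reach $L^{q_-}$), and a Poincar\'e-type lemma on tents (to recover $\nabla^m\vec u$ with the $\dist^{q_--1-q_-\sigma_-}$ weight). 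Without the Caccioppoli/Poincar\'e detour you cannot in general pass from an unweighted $L^{q_-}$ bound on $\nabla^m\vec u$ to the weighted one with the correct power of~$R_j$, since the sign of $q_--1-q_-\sigma_-$ is not determined by the hypotheses. The Neumann case needs a separate, substantially longer argument for the trace step.
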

We will prove this lemma in the next section, and complete the proof of Theorem~\ref{thm:atoms} in Section~\ref{sec:proof}.

\section{Proof of Lemma~\ref{lem:atoms:boundary}}
\label{sec:atoms:boundary}

%That $\vec u_+=\vec u_-$ follows from \cite[Corollary~\ref*{P:cor:compatible}]{Bar16pA} if $\Omega$ is bounded, or by assumption otherwise. Let $\Xi=\Omega \setminus \bigl(B(x_0,C_0\delta)\cup \Gamma(x_0)\bigr)$, where again $\delta=\dist(x_0,\partial\Omega)$. It suffices to bound
%\begin{equation*}
%\int_{\Xi} \abs{\nabla^m\vec u(x)}^p\dist(x,\partial\Omega)^{p-1-p\smooth}\, dx 
%%\leq C\doublebar{\arr\Phi}_{L^{p,\smooth}_{av}(\Omega)}^p
%\end{equation*}
%for some $h>0$ small enough (and independent of~$\delta$).

We will begin (Section~\ref{sec:far}) by treating the case where $x_0$ is far from~$\partial\Omega$; we will treat the more useful but much more intricate case of $x_0$ near $\partial\Omega$ in Section~\ref{sec:near}. 

Throughout this section we will let $\delta=\dist(x_0,\partial\Omega)$.

\subsection{The case of $x_0$ far from~$\partial\Omega$}\label{sec:far}

If $\partial\Omega$ is bounded but $\Omega$ is not, we must consider the case where $\delta=\dist(x_0,\partial\Omega)\geq \diam\partial\Omega$.

Recall that if $x\in \Omega \setminus \Gamma(x_0)$, then $\dist(x,\partial\Omega)<h\abs{x-x_0}$. An elementary argument involving the triangle inequality yields that $\abs{x-x_0} < \frac{1}{1-h}(\diam(\partial\Omega)+\dist(x_0,\partial\Omega))$. Thus, 
\begin{equation}\label{eqn:Omega:Gamma:ball}
\Omega\subset \Gamma(x_0)\cup B\Bigl(x_0,\frac{\diam\Omega+\dist(x_0,\partial\Omega)}{1-h}\Bigr).
\end{equation}
Thus, we wish to bound $\vec u$ in a ball.

We begin with the following lemma.
\begin{lem}\label{lem:embedding:local}
Let $\Omega\subset\R^\dmn$ be an open set, let
$\smooth<\sigma$, and let $0<p\leq 2$.
Suppose that $0<q\leq \infty$ is such that 
$\smooth-\frac{\dmnMinusOne}{p}  \leq \sigma-\frac{\dmnMinusOne}{ q}$.
Let $x_0\in \overline \Omega$ and let $R>\dist(x_0,\partial\Omega)$.
Then 
\begin{equation*}\int_{\Omega \cap B(x_0,R)} \abs{\arr \Psi(x)}^p\dist(x,\partial\Omega)^{p-1-p\smooth}\,dx
\leq C R^{\pdmnMinusOne(1-p/q)+p\sigma-p\smooth} \doublebar{\arr\Psi}_{L^{q,\sigma}_{av}(\Omega)}^p.
\end{equation*}
\end{lem}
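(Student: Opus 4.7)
My plan is to reduce the pointwise left-hand side to an averaged expression by a Whitney rearrangement and Jensen's inequality, and then to separate the data from a pure weight factor via H\"older's inequality.

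Write $A=\Omega\cap B(x_0,R)$, $w(x)=\dist(x,\partial\Omega)$, and $B_x=B(x,w(x)/2)$. A standard Whitney rearrangement (identical in spirit to the one used in the proof of Lemma~\ref{lem:unaveraged}) combined with Jensen's inequality (valid since $p\leq 2$) gives
\begin{equation*}
\int_A \abs{\arr\Psi(x)}^p w(x)^{p-1-ps}\,dx
\leq
C\int_A \biggl(\fint_{B_x}\abs{\arr\Psi}^2\biggr)^{p/2} w(x)^{p-1-ps}\,dx.
\end{equation*}

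Treating first the case $q>p$ (the case $q=p$ follows directly by bounding $w^{p(\sigma-s)}\leq (2R)^{p(\sigma-s)}$ pointwise and invoking Lemma~\ref{lem:unaveraged}), I apply H\"older's inequality with exponents $q/p$ and $q/(q-p)$, splitting the weight as $w^{p-1-ps}=w^{\alpha}\cdot w^{p-1-ps-\alpha}$ with $\alpha=p(q-1-q\sigma)/q$. This yields
\begin{equation*}
\int_A \biggl(\fint_{B_x}\abs{\arr\Psi}^2\biggr)^{p/2} w^{p-1-ps}\,dx \leq \biggl(\int_A \biggl(\fint_{B_x}\abs{\arr\Psi}^2\biggr)^{q/2} w^{q-1-q\sigma}\,dx\biggr)^{p/q} \biggl(\int_A w^{-1+\gamma}\,dx\biggr)^{(q-p)/q}
\end{equation*}
with $\gamma=pq(\sigma-s)/(q-p)>0$ by the assumption $s<\sigma$. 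The first factor is at most $\doublebar{\arr\Psi}_{L^{q,\sigma}_{av}(\Omega)}^p$ by extending the integral to all of $\Omega$.

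Finally, to bound the weight factor: since $R>\dist(x_0,\partial\Omega)$, we have $w\leq 2R$ on $A$. Combined with the Lipschitz-type tubular estimate $\abs{\{x\in A:w(x)<t\}}\leq CtR^{\dmnMinusOne}$ for $t\leq R$, a dyadic decomposition in $w$ gives $\int_A w^{-1+\gamma}\,dx\leq CR^{\dmnMinusOne+\gamma}$. The resulting exponent of $R$ is then $(\dmnMinusOne+\gamma)(q-p)/q = \dmnMinusOne(1-p/q)+p(\sigma-s)$, matching the claim. The main delicate point is precisely this weight estimate when $\gamma<1$, which relies on the Lipschitz regularity of $\partial\Omega$ present in the ambient setting of Theorem~\ref{thm:atoms}; the endpoint $q=\infty$ can be handled separately by the pointwise bound $(\fint_{B_x}\abs{\arr\Psi}^2)^{1/2}\leq \doublebar{\arr\Psi}_{L^{\infty,\sigma}_{av}(\Omega)}\,w(x)^{\sigma}$ extracted from the averaged-norm structure.
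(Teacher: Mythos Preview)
Your approach is natural and the H\"older algebra is correct, but there is a genuine gap. The lemma is stated for an arbitrary open set~$\Omega$, and the tubular estimate $\abs{\{x\in A:w(x)<t\}}\leq CtR^{\dmnMinusOne}$ simply fails in that generality: if $\partial\Omega\cap B(x_0,R)$ has upper Minkowski dimension strictly greater than~$\dmnMinusOne$, then $\int_A w^{-1+\gamma}\,dx$ can diverge for $0<\gamma<1$. You are honest about this and retreat to the Lipschitz hypothesis of Theorem~\ref{thm:atoms}; that suffices for the applications, but it does not prove the lemma as stated. Two smaller points: your Whitney--Fubini step in the first display needs the domain on the right enlarged (to, say, $\Omega\cap B(x_0,2R)$), since for $y\in A$ near $\partial B(x_0,R)$ with $w(y)$ comparable to~$R$ there need not be enough $x\in A$ with $y\in B_x$; and you omit the case $q<p$, which the hypotheses permit and which the paper actually uses (the bound~\eqref{eqn:finite:2} is an application with $p=\widetilde q_+$, $q=q_+$, and $\widetilde q_+>q_+$ is allowed by~\eqref{eqn:q:sigma:3}).

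The paper avoids all three issues by a different route. It introduces the auxiliary bounded set $V=\Omega\cap B(x_0,7R)$, on which $\dist(\,\cdot\,,\partial V)=\dist(\,\cdot\,,\partial\Omega)$ throughout $B(x_0,R)$; applies the bound~\eqref{eqn:unaveraged<averaged} on all of~$V$ to the truncated function $\1_{B(x_0,R)}\arr\Psi$ (so no restricted-domain Fubini issue arises); and then invokes the embedding $\doublebar{\,\cdot\,}_{L^{p,\smooth}_{av}(V)}\leq C(\diam V)^{\pdmnMinusOne(1/p-1/q)+\sigma-\smooth}\doublebar{\,\cdot\,}_{L^{q,\sigma}_{av}(V)}$ from \cite[Lemma~\ref*{P:lem:embedding}]{Bar16pA}, which requires only that $V$ be bounded with $\diam V\leq 14R$. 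Working entirely with averaged norms rather than the pointwise weight is what makes the argument independent of the regularity of~$\partial\Omega$, and the cited embedding handles $p\leq q$ and $p>q$ uniformly.
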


\begin{proof}
If $x\in\Omega$, let $B(x,\Omega)={B(x,\dist(x,\partial\Omega)/2)}$. Let $V=\Omega\cap B(x_0,7R)$; observe that if $x\in B(x_0,R)$, then $\dist(x,\partial\Omega) = \dist(x,\partial V)$.
Thus,
\begin{equation*}\int_{\Omega \cap B(x_0,R)} \abs{\arr\Psi(x)}^p\dist(x,\partial\Omega)^{p-1-p\smooth}\,dx
=\int_{V} \abs{\1_{B(x_0,R)}\arr\Psi(x)}^p\dist(x,\partial V)^{p-1-p\smooth}\,dx
.\end{equation*}
%Let $B(x,V)=B(x,\dist(x,\partial V)/2)$.
By the bound~\eqref{eqn:unaveraged<averaged}, and because $p\leq 2$,
\begin{equation*}
\int_{V} \abs{\1_{B(x_0,R)}\arr\Psi(x)}^p\dist(x,\partial V)^{p-1-p\smooth}\,dx
\leq
C\doublebar{\1_{B(x_0,R)}\arr\Psi}_{L^{p,\smooth}_{av}(V)}^p
.\end{equation*}
By \cite[Lemma~\ref*{P:lem:embedding}]{Bar16pA}, and because $\diam V\leq 14R$, we have that 
\begin{equation*}\doublebar{\1_{B(x_0,R)}\arr\Psi}_{L^{p,\smooth}_{av}(V)}
\leq
CR^{\pdmnMinusOne(1/p-1 /q) + \sigma -\smooth}\doublebar{\1_{B(x_0,R)}\arr\Psi}_{L^{q,\sigma}_{av}(V)}
.\end{equation*}
If $x\in \Omega$ and $B(x,\dist(x,\partial\Omega)/2)\cap B(x_0,R)\neq \emptyset$, then we may show using the triangle inequality that $\dist(x,\partial\Omega)\leq \dist(x,\partial B(x_0,7R))$ and so $\dist(x,\partial\Omega)=\dist(x,\partial V)$. Thus,
\begin{equation*}\doublebar{\1_{B(x_0,R)}\arr\Psi}_{L^{q,\sigma}_{av}(V)} = \doublebar{\1_{B(x_0,R)}\arr\Psi}_{L^{q,\sigma}_{av}(\Omega)}
\leq \doublebar{\arr\Psi}_{L^{q,\sigma}_{av}(\Omega)}\end{equation*}
as desired.
\end{proof}

Let $\vec u_+ \in {\dot W^{q_+,\sigma_+}_{m,av}(\Omega)}$ be the solution to the problem \eqref{eqn:Dirichlet:q+} or~\eqref{eqn:Neumann:q-}. Because $\Omega$ is unbounded, by assumption $\vec u_+=\vec u$. By Lemma~\ref{lem:embedding:local} with $\arr\Psi=\nabla^m\vec u$, $q=q_+$ and $\sigma=\sigma_+$, we have that
\begin{multline*}\int_{\Omega \cap B(x_0,2\delta/(1+h))} \abs{\nabla^m\vec u(x)}^p\dist(x,\partial\Omega)^{p-1-p\smooth}\,dx
\\ \leq C(h) \,\delta^{\pdmnMinusOne(1-p/q)+p\sigma-p\smooth} \doublebar{\vec u}_{\dot W^{q_+,\sigma_+}_{m,av}(\Omega)}^p.
\end{multline*}
By assumption $\doublebar{\vec u}_{\dot W^{q_+,\sigma_+}_{m,av}(\Omega)}\leq \doublebar{\arr\Phi}_{L^{q_+,\sigma_+}_{av}(\Omega)}$, and so by the bound~\eqref{eqn:whitney:q:sigma} we have that
\begin{equation*}\int_{\Omega \cap B(x_0,2\delta/(1-h))} \abs{\nabla^m\vec u(x)}^p\dist(x,\partial\Omega)^{p-1-p\smooth}\,dx
\leq C(h) \doublebar{\arr\Phi}_{L^{p,\smooth}_{av}(\Omega)}^p.
\end{equation*}
By Lemma~\ref{lem:atoms:-} and the inclusion~\eqref{eqn:Omega:Gamma:ball}, we have that Lemma~\ref{lem:atoms:boundary} is valid whenever $\dist(x_0,\partial\Omega)\geq \diam\partial\Omega$.

%\begin{proof}[Proof of Lemma~\ref{lem:atoms:+}] Let $\delta=\dist(x_0,\partial\Omega)$. By Lemma~\ref{lem:embedding:local}, we have that
%\begin{multline*}\int_{\Omega \cap B(x_0,C_0\delta)} \abs{\nabla^m\vec u_+ (x)}^p\dist(x,\partial\Omega)^{p-1-p\smooth}\,dx
%\\\leq C (C_0\delta)^{\pdmnMinusOne(1-p/q_+)+p\sigma_+ -p\smooth} \doublebar{\nabla^m\vec u_+}_{L^{q_+,\sigma_+}_{av}(\Omega)}^p.
%\end{multline*}
%Thus, by the bound~\eqref{eqn:whitney:q:sigma},
%\begin{equation*}\int_{B(x_0,C_0\delta)\cap\Omega} \abs{\nabla^m\vec u_+(x)}^p\dist(x,\partial\Omega)^{p-1-p\smooth}\, dx 
%\leq
%C(C_0)\doublebar{\arr\Phi}_{L^{p,\smooth}_{av}(\Omega)}^p
%\end{equation*}
%as desired.
%\end{proof}

\subsection{The case of $x_0$ near~$\partial\Omega$}\label{sec:near}

Throughout this section we will assume that $\delta=\dist(x_0,\partial\Omega)<\diam\partial\Omega$. (If $\Omega$ is a Lipschitz graph domain then $\diam\partial\Omega=\infty$ and so this is true for all~$x_0\in\R^\dmn$; if $\Omega$ is bounded then this is true for all $x_0\in\Omega$.)

Let $\Xi=\Omega \setminus \Gamma(x_0)$.
Observe that $\Xi$ is a set of points lying near $\partial\Omega$ and far from~$x_0$. We wish to cover $\Xi$ by simply connected regions $T\subset\Omega$ with primary Lipschitz constant~$M_0$, and then use well posedness of the problem~\eqref{eqn:Dirichlet:tilde} or~\eqref{eqn:Neumann:tilde} in~$T$ to bound $\vec u$ in~$T$.

We define the regions $T$ as follows. 
\begin{dfn}\label{dfn:tent} 
%Recall the definition \cite[Definition~2.2]{Bar16pB} of Lipschitz domain and Lipschitz graph domain. 
Recall from Definition~\ref{dfn:Lipschitz} and \cite[Definition~2.2]{Bar16pB} that $V$ is a Lipschitz graph domain with primary Lipschitz constant~$M$ if there is some system of coordinates and some Lipschitz function $\psi:\R^\dmnMinusOne\mapsto \R$ with $\doublebar{\nabla\psi}_{L^\infty(\R^\dmnMinusOne)}\leq M$ and with $V=\{(x',t):x'\in\R^\dmnMinusOne,\>t>\psi(x')\}$. Furthermore, if $\Omega$ is a Lipschitz domain, then either $\Omega=V$ for some Lipschitz graph domain~$V$ or there is some  $c_0>0$ and some $r_\Omega>0$ such that, if $z\in\partial\Omega$, then there is a Lipschitz graph domain $V$ such that $B(z,r_\Omega/c_0)\cap \Omega = B(z,r_\Omega/c_0)\cap V$.

Choose some $z\in\partial\Omega$. In the coordinates associated with the Lipschitz graph domain~$V$, we write $z=(z',t_0)$.
%Then there are some coordinates and some function $\psi$ with $\doublebar{\nabla\psi}_{L^\infty(\R^\dmnMinusOne)}\leq M$ such that $V=\{(y',t):y'\in\R^\dmnMinusOne,\>t>\psi(y')\}$; in these coordinates we write $z=(z',t_0)$.

If $\rho>0$, then let $K(z,\rho)$ be the cone $\{(y',t): M_0\abs{y'-z'} < t-t_0+M_0\rho\}$ with vertex at $(z',t_0-M_0\rho)$ and with slope $M_0$. Because $\doublebar{\nabla\psi}_{L^\infty(\R^\dmnMinusOne)}\leq M$, we have that
\begin{equation*}V\supset \{(y',t):t> t_0+M\abs{y'-z'}\}\end{equation*}
and it is straightforward to establish that if $M<M_0$ then
\begin{equation*}\{(y',t):t> t_0+M\abs{y'-z'}\}\supset K(z,\rho)\cap\{(y',t):t>t_0+MM_0\rho/(M_0-M)\}.\end{equation*}
Let $B$ be the ball with center on the axis of~$K$ such that $\partial B$ is tangent to $\partial K$ and such that 
\begin{equation*}\partial B\cap\partial K = \{(y',t_0+MM_0\rho/(M_0-M)+\rho):\abs{y'-z'} =M_0\rho/(M_0-M)+\rho/M_0\}.\end{equation*} 
%(We may compute that $B=B((y',t_0+(MM_0\rho/(M_0-M)+\rho)(1+1/M_0^2))$

We let 
\begin{gather}
\widetilde K(z,\rho)=K(z,\rho)\cap \{(y',t):t<t_0+MM_0\rho/(M_0-M)+\rho\},
\nonumber\\\label{eqn:T}
\widetilde T(z,\rho) = \widetilde K(z,\rho)\cup B,\qquad T(z,\rho) = \widetilde T(z,\rho)\cap V. \end{gather}
See Figure~\ref{fig:tent}.
\end{dfn}

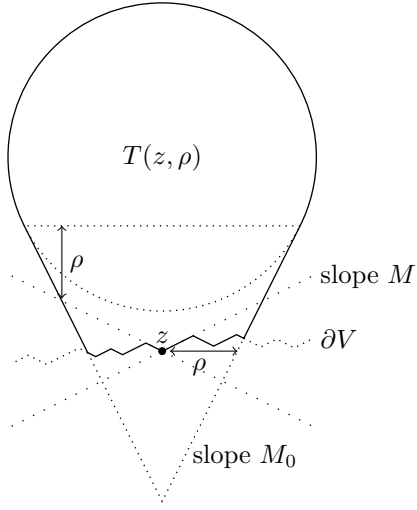
\begin{figure}
\begin{center}
\begin{tikzpicture}
% This figure was generated from a MetaPost figure,
% so it's kind of unreadable.
\draw  [dotted, line width=0.50188pt] (-0.99025,-0.0195)--(-1.10847,0.0396)--(-1.53877,-0.17554)--(-1.77966,-0.0551)--(-2,-0.16525);
\draw  [dotted, line width=0.50188pt] (1.08513,0.1703)--(1.31395,0.0559)--(1.52322,0.16052)--(1.75714,0.04356)--(1.99997,0.16498);
\draw  [loosely dotted, line width=0.50188pt] (-2,1)--(2,-1);
\draw  [loosely dotted, line width=0.50188pt] (-2,-1)--(2,1);
\draw  [dotted, line width=0.50188pt] (-1.83656,1.67319) ..controls (-1.73854,1.47543) and (-1.60835,1.293).. (-1.44934,1.13399) ..controls (-1.06496,0.7496) and (-0.54364,0.5336).. (0,0.5336) ..controls (0.54364,0.5336) and (1.06496,0.7496).. (1.44934,1.13399) ..controls (1.60497,1.2896) and (1.73297,1.46765).. (1.83023,1.66054);
\draw  [dotted, line width=0.50188pt] (-0.99023,-0.01953)--(0,-2)--(1.08514,0.1703);
\draw  [dotted, line width=0.50188pt] (1.83333,1.66667)--(-1.83333,1.66667);
\draw  [line width=0.50188pt] (-0.99025,-0.0195)--(-0.88356,-0.07285)--(-0.67937,0.02924)--(-0.5262,-0.04735)--(-0.21571,0.10788)--(0.0021,-0.00104)--(0.4113,0.20357)--(0.68478,0.06683)--(0.98842,0.21866)--(1.08513,0.1703)--(1.83023,1.66054) ..controls (1.97322,1.94412) and (2.04973,2.25977).. (2.04973,2.58333) ..controls (2.04973,3.12698) and (1.83374,3.6483).. (1.44934,4.03268) ..controls (1.06496,4.41707) and (0.54364,4.63306).. (0,4.63306) ..controls (-0.54364,4.63306) and (-1.06496,4.41707).. (-1.44934,4.03268) ..controls (-1.83374,3.6483) and (-2.04973,3.12698).. (-2.04973,2.58333) ..controls (-2.04973,2.26457) and (-1.97548,1.95349).. (-1.83656,1.67319)--(-0.99025,-0.0195)--cycle;
\draw  [<->] (0.10544,0)--(1,0);
\draw  [<->] (-1.33333,0.70181)--(-1.33333,1.66667);
\node [] at (0,0.18149) {$z$};
\fill  [line width=3.01126pt] (0,0) circle (1.50563pt);
\node [] at (-1.13664,1.16667) {$\rho$};
\node [] at (0.49245,-0.21567) {$\rho$};
\node [] at (2.74066,1) {slope $M$};
\node [] at (1.10034,-1.4) {slope $M_0$};
\node [] at (0,2.58333) {$T(z,\rho)$};
\node [] at (2.35042,0.16496) {$\partial V$};
\end{tikzpicture}
\caption{The tent $T(z,\rho)$.}\label{fig:tent}
\end{center}
\end{figure}

%surface of the ball $B=B((z',t_0-M_0\rho+A\rho),A\rho/\sqrt{1+M_0^2})$ is tangent to the surface of~$K$, and the contact region is a sphere of dimension $\dmn-2$ contained in $\{(y',t_0-M_0\rho+A\rho M_0^2/(1+M_0^2)):y'\in \R^\dmnMinusOne\}$.

%
%If $\rho$ is small enough (depending on $M_0$, $M$ and $c_1r_\Omega$), then let 
%\begin{equation*}\psi_\rho(y') =\begin{cases}
%\max(\psi(y'),M_0\abs{y'-z'}-A), & \abs{y'-z'}<3D\rho,\\
%-A+\frac{25}{3}M_0D\rho-\frac{4}{3}M_0\sqrt{(5D\rho)^2-\abs{y'-z'}^2}
%, & 3D\rho\leq \abs{y'-z'} <5D\rho
%\end{cases}
%\end{equation*}
%where $A=2\rho(M+M_0)-t_0$,  $D=(M_0+M)/(M_0-M)>1$.
%Observe that
%\begin{itemize}
%\item If $\abs{y'-z'}<2\rho$, then $\psi_\rho(y')=\psi(y')$.
%\item If $2D\rho<\abs{y'-z'}<3D\rho$, then $\psi_\rho(y')=M_0\abs{y'-z'}-A$. In particular, if $\abs{y'-z'}<3D\rho$, then $\abs{\nabla\psi_\rho}\leq M_0$.
%\item $\psi_\rho$ and $\nabla \psi_\rho$ are continuous on the sphere $\abs{y'-z'}=3D\rho$, and so are continuous in the region $2D\rho<\abs{y'-z'}<5D\rho$. 
%\end{itemize}
%
%Let
%\begin{equation*}T(z,\rho)=\{(y',t): \abs{y'-z'}<5D\rho,\>\psi_\rho(y')<t<-\psi_\rho(y')-2A+(50/3)M_0D\rho\}
%.\end{equation*}
% 
Then $T(z,\rho)$ is a bounded, simply connected domain with primary Lipschitz constant~$M_0$, and so the Dirichlet problem~\eqref{eqn:Dirichlet:tilde} or Neumann problem~\eqref{eqn:Neumann:tilde} in $T(z,\rho)$ is well posed. 

We now cover $\Xi$ by tents of this form.

\begin{lem}\label{lem:tent:cover}
Let $\Omega$ be a Lipschitz domain, let $x_0\in\Omega$ with $\dist(x_0,\partial\Omega)<\diam\partial\Omega$, and let $\Xi$ be as given above. If $h>0$ is small enough, depending only on the Lipschitz character of~$\Omega$ and not on our choice of $x_0\in\Omega$, then there exist constants $C_1>0$, $C_2>0$ and a (finite or countable) set of points $\{z_j\}\subset\partial\Omega$ such that the following conditions hold.
\begin{itemize}
\item $\Xi\subset\bigcup_{j} T(z_j,\abs{z_j-x_0}/2C_1)$.
\item if $x\in\Xi$ then $x\in T(z_j,\abs{z_j-x_0}/C_1)$ for at most $C_2$ values of~$j$.
\item If $k$ is an integer, then $2^k\leq \abs{z_j-x_0}\leq 2^{k+1}$ for at most $C_2$ values of~$j$.
\item $T(z_j,\abs{z_j-x_0}/C_1)\subset\Omega$ for all $j$, and so $\vec u$ is defined in $ T(z_j,\abs{z_j-x_0}/C_1)$.
\item If $T=T(z_j,\abs{z_j-x_0}/C_1)$ and $\widetilde T=\widetilde T(z_j,\abs{z_j-x_0}/C_1)$, then $\partial T \cap \partial V=\partial T\cap\partial\Omega=\partial T \setminus \partial \widetilde T$.
\item $T(z_j,\abs{z_j-x_0}/C_1)\cap B(x_0,\delta/2)=\emptyset$, and so $L\vec u=0$ in~$T(z_j,\abs{z_j-x_0}/C_1)$.
\end{itemize}
\end{lem}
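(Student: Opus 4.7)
The plan is to construct the cover via a Whitney-type decomposition of $\Xi$ indexed by dyadic scales of distance from~$x_0$. The constants will be arranged in the order: $M, M_0$ determine a constant $C_M=C_M(M,M_0)$ bounding $\diam\widetilde T(\cdot,\rho)/\rho$; $C_1$ is chosen large relative to $C_M$, $c_0$, and $c_0\diam\Omega/r_\Omega$; finally $h$ is chosen small relative to~$C_1$. As a preliminary geometric observation, if $x\in\Xi$ and $z\in\partial\Omega$ realizes $\dist(x,\partial\Omega)$, then $\abs{x-z}\leq h\abs{x-x_0}$ combined with $\delta\leq\dist(x,\partial\Omega)+\abs{x-x_0}$ gives $\abs{x-x_0}\geq\delta/(1+h)$, $\abs{z-x_0}\in[(1-h)\abs{x-x_0},(1+h)\abs{x-x_0}]$, and (for $h<1/2$) both $\abs{z-x_0}>\delta(1-h)/(1+h)$ and $\abs{x-z}<2h\abs{z-x_0}$.

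Next I would establish a tent-inclusion claim: if $C_1$ is large and $h$ small, then for any $x\in\Xi$ and any $z_j\in\partial\Omega$ such that some nearest boundary point $z$ of $x$ satisfies $\abs{z-z_j}\leq\abs{z_j-x_0}/(10C_1)$ and $\abs{z_j-x_0}\leq 2\abs{z-x_0}$, one has $x\in T(z_j,\rho)$ where $\rho=\abs{z_j-x_0}/(2C_1)$. The verification proceeds in the Lipschitz graph coordinates at~$z_j$: $\abs{x'-z_j'}$ and $\abs{x_t-(z_j)_t}$ are each bounded by $\abs{x-z}+\abs{z-z_j}\leq(4h+1/(10C_1))\abs{z_j-x_0}$, which is much smaller than~$\rho$; using $x\in V$ to obtain $x_t-(z_j)_t\geq -M\abs{x'-z_j'}$ from the Lipschitz bound on the defining function, these bounds place $x$ strictly inside the cone $K(z_j,\rho)$ and below the truncation height, so $x\in\widetilde K(z_j,\rho)\cap V\subset T(z_j,\rho)$.

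For the decomposition itself, I would apply the $5r$-covering lemma to each dyadic annulus $E_k=\{z\in\partial\Omega:2^k\leq\abs{z-x_0}\leq 2^{k+1}\}$, for integers $k$ with $2^k\in[\delta(1-h)/(2(1+h)),\,2\diam\partial\Omega]$. This yields centers $\{z_j^k\}\subset E_k$ such that the surface balls $B_{\partial\Omega}(z_j^k,2^k/(50C_1))$ are pairwise disjoint while $B_{\partial\Omega}(z_j^k,2^k/(10C_1))$ cover~$E_k$; the Ahlfors regularity of $\partial\Omega$ then bounds the number of indices per~$k$ by a constant $C_2$ depending only on the Lipschitz character.

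Verification of the six listed properties runs as follows. Property~(1) combines the previous two paragraphs; Property~(3) is immediate from the per-scale bound. For Property~(2), each tent $T(z_j,\abs{z_j-x_0}/C_1)$ has diameter at most $C_M\abs{z_j-x_0}/C_1$, so tents containing a common point have centers within a bounded multiple of $\abs{z_j-x_0}/C_1$ of each other and at comparable dyadic scales, contradicting disjointness of the small Vitali balls for all but boundedly many indices. Property~(4) reduces to $\widetilde T\subset B(z_j,C_M\abs{z_j-x_0}/C_1)\subset B(z_j,r_\Omega/c_0)$, arranged by the choice of~$C_1$; then $T=\widetilde T\cap V=\widetilde T\cap\Omega\subset\Omega$, and this identification also yields Property~(5) since $\partial V\cap B(z_j,r_\Omega/c_0)=\partial\Omega\cap B(z_j,r_\Omega/c_0)$. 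Property~(6) reduces to $\dist(T,x_0)\geq\abs{z_j-x_0}(1-C_M/C_1)>\delta/2$, which holds by the preliminary estimate $\abs{z_j-x_0}>\delta(1-h)/(1+h)$ provided $h$ is small and $C_1$ is large. The hard part of the argument is the quantitative coordination of these estimates: large $C_1$ tightens Properties~(2), (4), (6) but shrinks the Vitali balls used in the tent-inclusion claim, so the smallness of $h$ must ultimately be calibrated against the chosen~$C_1$; a secondary subtlety is the unbounded Lipschitz graph case ($r_\Omega=\diam\Omega=\infty$), where Property~(4) is automatic but the dyadic range is infinite and the covering must be shown to remain locally finite.
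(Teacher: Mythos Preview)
Your proposal is correct and follows essentially the same strategy as the paper: reduce membership in $\Xi$ to proximity of $x$ to its nearest boundary point $z$ at scale $|z-x_0|$, then run a Vitali-type selection to obtain boundedly overlapping tents with boundedly many centers per dyadic annulus, choosing $C_1$ large (to control tent diameters and hence Properties~(4)--(6)) and then $h$ small (to force Property~(1)).

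The organizational difference is that the paper first shows $x\in\widetilde T(z,|z-x_0|/4C_1)\cap\Omega$ for the \emph{actual} nearest boundary point~$z$, and then invokes a Vitali/Besicovitch argument on the family $\{\widetilde T(z,|z-x_0|/4C_1)\}_{z\in\partial\Omega}$ in one sentence, leaving the bounded-overlap and per-annulus cardinality bounds implicit. You instead run the $5r$-covering lemma on the boundary annuli $E_k$ first and then prove a tent-inclusion claim showing $x$ lands in the tent over the nearby selected center~$z_j$. Your route makes the bounded-overlap and Property~(3) bounds transparent via Ahlfors regularity of~$\partial\Omega$, at the cost of a slightly more delicate tent-inclusion step (you must pass from $z$ to a nearby $z_j$ in the graph coordinates at~$z_j$, not at~$z$). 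Both arguments are sound; the paper's is shorter but sketchier, and yours supplies exactly the quantitative details the paper elides.
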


\begin{proof}
We may ensure $T(z_j,\abs{z_j-x_0}/C_1)\cap B(x_0,\delta/2)=\emptyset$ by choosing $C_1$ large enough that $T(z,\rho/C_1)\subset B(z,\rho/2)$ for all $z\in\partial\Omega$ and all $\rho>0$. %(If $\partial\Omega$ is bounded we will need to impose additional requirements on~$C_1$.)

Let $x\in\Xi=\{x\in\Omega:\dist(x,\partial\Omega)\leq h\abs{x-x_0}\}$.
An elementary argument involving the triangle inequality shows that, if $z\in\partial\Omega$ with $\abs{x-z}=\dist(x,\partial\Omega)$, then $\abs{x-z}\leq (h/(1-h))\abs{z-x_0}$.  Thus, $x\in B(z,(h/(1-h))\abs{z-x_0})\cap\Omega$. For any given $C_1>0$, we may choose $h$ small enough that $B(z,(h/(1-h))\abs{z-x_0})\subset \widetilde T(z,\abs{z-x_0}/4C_1)$. Thus, 
\begin{equation*}\Xi\subset\bigcup_{z\in\partial\Omega} \widetilde T(z,\abs{z-x_0}/4C_1)\cap\Omega,\end{equation*}
and we may choose $z_j$ such that 
\begin{equation*}\Xi\subset\bigcup_{j=1}^\infty \widetilde T(z_j,\abs{z_j-x_0}/2C_1)\cap\Omega\end{equation*}
and such that if $x\in\R^\dmn$ then $x\in \widetilde T(z_j,\abs{z_j-x_0}/C_1)$ for at most $C_3$ values of~$j$, for some large constant~$C_3$; thus, $\abs{z_j-x_0}\approx 2^k$ for at most $C_2=C_3C_4$ values of~$j$, where $C_4\geq 1$.

It remains to choose $C_1$ large enough that 
\begin{equation*}\widetilde T(z_j,\abs{z_j-x_0}/C_1)\cap\Omega = \widetilde T(z_j,\abs{z_j-x_0}/C_1)\cap V =  T(z_j,\abs{z_j-x_0}/C_1)\end{equation*} for any such~$j$.

If $\Omega$ is a Lipschitz graph domain then $V=\Omega$ and there is nothing to do. Otherwise, observe that $\abs{z_j-x_0}<2\diam\partial\Omega$. We may choose $C_1$ large enough that $\widetilde T(z,2\diam\partial\Omega/C_1)\cap\Omega =  T(z,2\diam\partial\Omega/C_1)$ for all $z\in\partial\Omega$. This completes the proof.
\end{proof}

Thus, given the above choices of~$C_1$ and~$h$, if $\dist(x_0,\partial\Omega)<\diam\partial\Omega$ then the right-hand side of
\begin{multline}\label{eqn:atoms:53}
\int_\Xi \abs{\nabla^m \vec u(x)}^p\,\dist(x,\partial\Omega)^{p-1-p\smooth}\,dx
\\\leq C_2
\sum_{j} \int_{T(z_j,\abs{z_j-x_0}/2C_1)} \abs{\nabla^m \vec u(x)}^p\,\dist(x,\partial\Omega)^{p-1-p\smooth}\,dx\end{multline}
is meaningful and the inequality is valid. We need only bound the sum on the right hand side.

Fix some such~$j$. Let $R=\abs{z_j-x_0}/C_1$, and let $T_\mu= T(z_j,\mu R)$ and $\widetilde T_\mu=\widetilde T(z_j,\mu R)$ for any $1/2\leq \mu\leq 1$. Recall that $T_1\subset\Omega$ and that $\arr\Phi=0$ in~$T_1$. We wish to bound the integral over~$T_{1/2}$. 
By H\"older's inequality and because $p<\widetilde q_+$,
\begin{multline*}
\int_{T_{1/2}} \abs{\nabla^m \vec u(x)}^p\,\dist(x,\partial\Omega)^{p-1-p\smooth}\,dx
\\\leq
	\smash{\biggl(\int_{T_{1/2}} \abs{\nabla^m \vec u(x)}^{\widetilde q_+}\,\dist(x,\partial\Omega)^{\widetilde q_+-1-\widetilde q_+\widetilde \sigma_+}\,dx
	\biggr)^{p/\widetilde q_+}}\vphantom{|}
	\\\qquad\times
	\biggl(
	\int_{T_{1/2}} \dist(x,\partial\Omega)^{(\widetilde \sigma_+-\smooth)p\widetilde q_+/(\widetilde q_+-p)-1}\, dx 
	\biggr)^{1-p/\widetilde q_+}
.\end{multline*}
Evaluating the second integral, we have that
{\multlinegap=0pt\begin{multline*}
\int_{T_{1/2}} \abs{\nabla^m \vec u(x)}^p\,\dist(x,\partial\Omega)^{p-1-p\smooth}\,dx
\\\leq
	CR^{\pdmnMinusOne(1-p/\widetilde q_+)+(\widetilde \sigma_+-\smooth)p}\biggl(\int_{T_{1/2}} \abs{\nabla^m \vec u(x)}^{\widetilde q_+}\,\dist(x,\partial\Omega)^{\widetilde q_+-1-\widetilde q_+\widetilde \sigma_+}\,dx
	\biggr)^{p/\widetilde q_+}
.\end{multline*}}%
We wish to bound  the right-hand side.

\begin{lem}
If $x\in T_{1/2}$ and $3/4\leq \mu\leq 1$, then $\dist(x,\partial\Omega)\approx \dist(x,\partial T_\mu)$.
\end{lem}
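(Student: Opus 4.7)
The plan is to establish the two estimates
\begin{equation*}
\dist(x,\partial T_\mu)\leq \dist(x,\partial\Omega)
\quad\text{and}\quad
\dist(x,\partial T_\mu)\geq c\,\dist(x,\partial\Omega)
\end{equation*}
separately, with a constant $c>0$ depending only on $M$, $M_0$, and the lower bound $\mu\geq 3/4$. Since $T_{1/2}\subset T_\mu$, the hypothesis $x\in T_{1/2}$ means in particular that $x$ is an interior point of $T_\mu$, so both distances are positive.

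The first estimate is a soft consequence of the inclusion $T_\mu\subset\Omega$ (which is guaranteed by Lemma~\ref{lem:tent:cover}). Indeed, $\Omega^c\subset T_\mu^c$, and because $x$ is interior to both sets one has $\dist(x,\partial T_\mu)=\dist(x,T_\mu^c)\leq \dist(x,\Omega^c)=\dist(x,\partial\Omega)$.

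The second estimate requires the decomposition
\begin{equation*}
\partial T_\mu=(\partial T_\mu\cap\partial\Omega)\cup(\partial T_\mu\setminus\partial\Omega),
\end{equation*}
which by Lemma~\ref{lem:tent:cover} is contained in $\partial\Omega\cup\partial\widetilde T_\mu$. Since distance to a union is the minimum of the distances, it suffices to bound each piece separately. For the first piece, $\partial T_\mu\cap\partial\Omega\subset\partial\Omega$ gives the trivial inequality $\dist(x,\partial T_\mu\cap\partial\Omega)\geq\dist(x,\partial\Omega)$. For the second piece (the ``artificial'' boundary on the cone and spherical cap of $\widetilde T_\mu$), I will exploit the explicit geometry of Definition~\ref{dfn:tent}: the $T_\mu$-cone and $T_{1/2}$-cone are parallel with vertices separated by $M_0(\mu-1/2)R\geq M_0 R/8$, and the spherical cap of $\widetilde T_\mu$ likewise sits at distance $\gtrsim R$ from $\widetilde T_{1/2}$. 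An elementary calculation (perpendicular distance between parallel cones and between concentric spherical caps) therefore shows $\dist(x,\partial\widetilde T_\mu)\geq c R$ for every $x\in T_{1/2}$, with $c=c(M,M_0)>0$.

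It remains to convert this lower bound $cR$ into a constant multiple of $\dist(x,\partial\Omega)$. Since $z_j\in\partial\Omega$ and since the full tent $T_1$ has diameter comparable to $R$ (again from Definition~\ref{dfn:tent}), the triangle inequality gives $\dist(x,\partial\Omega)\leq |x-z_j|\leq C R$ with $C=C(M_0)$, so $\dist(x,\partial\widetilde T_\mu)\geq cR\geq (c/C)\dist(x,\partial\Omega)$. Taking the minimum of the two pieces yields $\dist(x,\partial T_\mu)\geq c'\dist(x,\partial\Omega)$, completing the proof. The only real (though minor) obstacle is the elementary distance calculation on $\partial\widetilde T_\mu$; everything else is set-theoretic or follows from Lemma~\ref{lem:tent:cover}.
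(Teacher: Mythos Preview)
Your proof is correct and follows essentially the same route as the paper's: use $T_\mu\subset\Omega$ for one inequality, then split $\partial T_\mu$ into its intersection with $\partial\Omega$ (trivial) and the artificial piece on $\partial\widetilde T_\mu$, bounding the latter below by a constant times $R$ via the explicit geometry of the tents, and finally noting $\dist(x,\partial\Omega)\leq CR$ for $x\in T_{1/2}$. The only cosmetic difference is that the paper phrases the split as a case distinction and invokes the single uniform gap $\dist(T_{1/2},\partial\widetilde T_{3/4})$ rather than treating each $\mu$ separately.
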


\begin{proof}
Observe that  $T_1\subset\Omega$, and so if $x\in T_{1/2}$ and $1/2\leq\mu\leq 1$ then $\dist(x,\partial\Omega)\geq \dist(x,\partial T_\mu)$.

Conversely, suppose that $x\in T_{1/2}$ and $3/4\leq \mu\leq 1$. Then either $\dist(x,\partial T_\mu) = \dist(x,\partial T_\mu\cap\partial\Omega)\geq\dist(x,\partial\Omega)$ or $\dist(x,\partial T_\mu) = \dist(x,\partial T_\mu\cap\partial\widetilde T_\mu)$. 

If $\dist(x,\partial T_\mu) = \dist(x,\partial T_\mu\cap\partial\widetilde T_\mu)$, then $\dist (x,\partial T_\mu) \geq \dist(T_{1/2},\partial \widetilde T_{3/4})=R/C$. %(M_0/\sqrt{16+16M_0^2})$.
But $\dist(x,\partial\Omega)<RC$ for all $x\in T_{1/2}$,  and so $\dist(x,\partial\Omega) \leq C\dist(x,\partial T_\mu) $ for all $x\in T_{1/2}$ and all $3/4\leq \mu\leq 1$, as desired.
\end{proof}
Thus, if $3/4\leq\mu\leq 1$ then
\begin{multline*}%\label{eqn:atoms:1}
\biggl(\int_{T_{1/2}} \abs{\nabla^m \vec u(x)}^p\,\dist(x,\partial\Omega)^{p-1-p\smooth}\,dx\biggr)^{\widetilde q_+/p}
\\\leq
	%\inf_{{3/4\leq\mu\leq 1}}
	CR^{\pdmnMinusOne(\widetilde q_+/p-1)+(\widetilde \sigma_+-\smooth)\widetilde q_+}
	%\\\qquad\qquad\qquad
	\int_{T_\mu} \abs{\nabla^m \vec u(x)}^{\widetilde q_+}\,\dist(x,\partial T_\mu)^{\widetilde q_+-1-\widetilde q_+\widetilde \sigma_+}\,dx
.\end{multline*}
%where $\alpha={\pdmnMinusOne(\widetilde q_+/p-1)+(\widetilde \sigma_+-\smooth)\widetilde q_+}$.

We now wish to apply the bounds \eqref{eqn:Dirichlet:tilde} and~\eqref{eqn:Neumann:tilde} to bound the right hand side. Thus, we must show that $\vec u\in \dot W^{\widetilde q_+,\widetilde\sigma_+}_{m,av}(T_\mu)$ for some $3/4\leq \mu\leq 1$. 
Because $\vec u\in \dot W^{q_-,\sigma_-}_{m,av}(\Omega)$, we have that $\vec u\in \dot W^2_{m,loc}(T_\mu)$ for any $0<\mu<1$. Furthermore,
$L\vec u=0$ in~$T_\mu$, and so by Lemma~\ref{lem:unaveraged},
\begin{equation*}\doublebar{\vec u}_{\dot W^{\widetilde q_+,\widetilde\sigma_+}_{m,av}(T_\mu)}^{\widetilde q_+}
\approx \int_{T_\mu} \abs{\nabla^m \vec u(x)}^{\widetilde q_+} \dist(x,\partial T_\mu)^{\widetilde q_+-1-\widetilde q_+\widetilde \sigma_+} \,dx.\end{equation*}
Observe that
\begin{multline*}
\fint_{3/4}^{5/6} \int_{T_\mu} \abs{\nabla^m \vec u(x)}^{\widetilde q_+} \dist(x,\partial T_\mu)^{\widetilde q_+-1-\widetilde q_+\widetilde \sigma_+} \,dx\,d\mu
\\\leq 
12\int_{T_{5/6}} \abs{\nabla^m \vec u(x)}^{\widetilde q_+} \int_{3/4}^{5/6} \dist(x,\partial T_\mu)^{\widetilde q_+-1-\widetilde q_+\widetilde \sigma_+}\,d\mu \,dx
\\\leq C \int_{T_{5/6}} \abs{\nabla^m \vec u(x)}^{\widetilde q_+} \dist(x,\partial \Omega)^{\widetilde q_+-1-\widetilde q_+\widetilde \sigma_+} \,dx
.\end{multline*}
As in Section~\ref{sec:far}, let $\vec u_+\in {\dot W^{q_+,\sigma_+}_{m,av}(\Omega)}$ be the solution to the problem \eqref{eqn:Dirichlet:q+} or~\eqref{eqn:Neumann:q-}. By \cite[Corollary~\ref*{P:cor:unique:extrapolate}]{Bar16pA} or by assumption, we have that $\vec u=\vec u_+$ and so $\vec u\in {\dot W^{q_+,\sigma_+}_{m,av}(\Omega)}$.
If $\widetilde\sigma_+=\sigma_+$ and $\widetilde q_+=q_+$, then by the bound~\eqref{eqn:unaveraged<averaged},
\begin{equation*}\int_{T_{5/6}} \abs{\nabla^m \vec u(x)}^{\widetilde q_+} \dist(x,\partial \Omega)^{\widetilde q_+-1-\widetilde q_+\widetilde \sigma_+} \,dx
\leq C\doublebar{\vec u}_{\dot W^{q_+,\sigma_+}_{m,av}(\Omega)}^{ q_+}<\infty\end{equation*}
and so for almost every $\mu\in (3/4,5/6)$ we have that
\begin{equation}\label{eqn:finite} 
\doublebar{\vec u}_{\dot W^{\widetilde q_+,\widetilde\sigma_+}_{m,av}(T_\mu)}^{\widetilde q_+}
\leq C\int_{T_\mu} \abs{\nabla^m \vec u(x)}^{\widetilde q_+} \dist(x,\partial T_\mu)^{\widetilde q_+-1-\widetilde q_+\widetilde \sigma_+} \,dx<\infty .\end{equation}
Thus, we may apply the bounds \eqref{eqn:Dirichlet:tilde} or~\eqref{eqn:Neumann:tilde} to~$\vec u$ in~$T_\mu$.

If $\widetilde \sigma_+<\sigma_+$, then by Lemma~\ref{lem:embedding:local} with $p=\widetilde q_+$, $\smooth=\widetilde\sigma_+$, and because $T_{5/6}\subset B(z_j,CR)$, we have that 
\begin{multline}\label{eqn:finite:2}\int_{T_{5/6}} \abs{\nabla^m \vec u(x)}^{\widetilde q_+} \dist(x,\partial \Omega)^{\widetilde q_+-1-\widetilde q_+\widetilde \sigma_+} \,dx
\\\leq CR^{\pdmnMinusOne(1-\widetilde q_+/q_+) +\widetilde q_+\sigma_+-\widetilde q_+\widetilde\sigma_+} \doublebar{\vec u}_{\dot W^{q_+,\sigma_+}_{m,av}(\Omega)}^{\widetilde q_+}<\infty\end{multline}
and we may still apply the bounds \eqref{eqn:Dirichlet:tilde} or~\eqref{eqn:Neumann:tilde}.

Thus, if $\Tr_{m-1}^\Omega\vec u=0$ and the inequality \eqref{eqn:Dirichlet:tilde} is valid, then 
\begin{multline}
\label{eqn:Dirichlet:tent:boundary}
\biggl(\int_{T_{1/2}} \abs{\nabla^m \vec u(x)}^p\,\dist(x,\partial\Omega)^{p-1-p\smooth}\,dx\biggr)^{{\widetilde q_+/p}}
\\\leq
	CR^{\pdmnMinusOne(\widetilde q_+/p-1)+(\widetilde \sigma_+-\smooth)\widetilde q_+}
	\int_{3/4}^{5/6}
	\doublebar{\Tr_{m-1}^{T_\mu}\vec u}_{\dot W\!A^{\widetilde q_+}_{m-1,\widetilde \sigma_+}(\partial T_\mu)}^{\widetilde q_+}\,d\mu
.\end{multline}
If $\M_{\mat A,\arr\Phi}^\Omega\vec u=0$ and the inequality \eqref{eqn:Neumann:tilde} is valid, then 
\begin{multline}
\label{eqn:Neumann:tent:boundary}
\biggl(\int_{T_{1/2}} \abs{\nabla^m \vec u(x)}^p\,\dist(x,\partial\Omega)^{p-1-p\smooth}\,dx\biggr)^{\widetilde q_+/p}
\\\leq
	CR^{\pdmnMinusOne(\widetilde q_+/p-1)+(\widetilde \sigma_+-\smooth)\widetilde q_+}\int_{3/4}^{5/6}
	\doublebar{\M_{\mat A,0}^{T_\mu} \vec u}_{\dot N\!A^{\widetilde q_+}_{m-1,\widetilde \sigma_+-1}(\partial T_\mu)}^{\widetilde q_+}\,d\mu
.\end{multline}
By the bounds~\eqref{eqn:finite} and~\eqref{eqn:finite:2} and \cite[Theorems~5.1 and~7.1]{Bar16pB}, $\Tr_{m-1}^{T_\mu}\vec u\in {\dot W\!A^{\widetilde q_+}_{m-1,\widetilde \sigma_+}(\partial T_\mu)}$ and  ${\M_{\mat A,0}^{T_\mu} \vec u}\in{\dot N\!A^{\widetilde q_+}_{m-1,\widetilde \sigma_+-1}(\partial T_\mu)} $ for almost every $\mu$ with $3/4\leq \mu\leq 5/6$.

%\begin{equation*}
%\int_{T_{1/2}} \abs{\nabla^m \vec u(x)}^p\,\dist(x,\partial\Omega)^{p-1-p\smooth}\,dx
%\\\leq
%	C R^{({\pdmnMinusOne(\widetilde q_+/p-1)+(\widetilde \sigma_+-\smooth)\widetilde q_+}+q_+-1-q_+\sigma_+) p/q_+}
%	\doublebar{\nabla^m \vec u}_{L^{q_+}(T_{5/6})}
%.\end{equation*}
%If $q_+-1-q_+\sigma_+\geq 0$, then this is an immediate consequence of formula~\eqref{eqn:atoms:1}.

To complete the bounds on $\nabla^m\vec u$ in $T_{1/2}$, we must bound the respective right hand sides.

\begin{lem}\label{lem:atoms:besov:Dirichlet}
Let $0<\sigma<1$ and let $1< q< \infty$, and let $T$ be a bounded simply connected Lipschitz domain.

If $\arr f\in \dot W\!A^{q}_{m-1,\sigma}(\partial T)$, and if $\nabla_\tau \arr f\in L^{q}(\partial T)$, where $\nabla_\tau $ denotes the gradient along~$\partial T$, then 
\begin{equation*}%\label{eqn:besov:regular}
\doublebar{\arr f}_{\dot W\!A^{q}_{m-1,\sigma}(\partial T)}
\leq C \,(\diam T)^{1-\sigma} \doublebar{\nabla_\tau f}_{L^{q}(\partial T)}
.\end{equation*}
\end{lem}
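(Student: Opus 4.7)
The strategy is to reduce to a pointwise control of Besov seminorms by a maximal function of the tangential gradient, then use the standard boundedness of the Hardy--Littlewood maximal operator on $L^q$ for $q>1$. This is an on-surface analogue of the classical fact that $W^{1,q}\hookrightarrow B^{q,q}_\sigma$ on bounded Lipschitz domains, with sharp scaling in the diameter.

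\emph{Step 1: Reduction to a single component.} By definition, $\dot W\!A^{q}_{m-1,\sigma}(\partial T)$ sits inside $(\dot B^{q,q}_\sigma(\partial T))^{Nq}$, with norm equivalent (up to constants depending only on $q,\sigma$ and the Lipschitz character of~$T$) to the sum of the Besov seminorms of its components. Thus it suffices to prove, for any single scalar function $f$ on $\partial T$ with $\nabla_\tau f\in L^q(\partial T)$, the bound
\begin{equation*}
\doublebar{f}_{\dot B^{q,q}_\sigma(\partial T)}\leq C(\diam T)^{1-\sigma}\doublebar{\nabla_\tau f}_{L^q(\partial T)}.
\end{equation*}

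\emph{Step 2: Pointwise control by a maximal function.} Use the standard difference integral characterization
\begin{equation*}
\doublebar{f}_{\dot B^{q,q}_\sigma(\partial T)}^q \approx \int_{\partial T}\int_{\partial T}\frac{\abs{f(x)-f(y)}^q}{\abs{x-y}^{\dmnMinusOne+\sigma q}}\,d\sigma(y)\,d\sigma(x).
\end{equation*}
On the Lipschitz surface $\partial T$, a standard chaining argument using a rectifiable path from $x$ to $y$ lying in $\partial T$ with length comparable to $\abs{x-y}$ yields the pointwise bound
\begin{equation*}
\abs{f(x)-f(y)}\leq C\,\abs{x-y}\bigl(\M(\nabla_\tau f)(x)+\M(\nabla_\tau f)(y)\bigr),
\end{equation*}
where $\M$ denotes the Hardy--Littlewood maximal operator on $\partial T$. (This is the Lipschitz-surface version of the well-known estimate $\abs{u(x)-u(y)}\leq C\abs{x-y}(\M\nabla u(x)+\M\nabla u(y))$; it holds under the Lipschitz character of $\partial T$.)

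\emph{Step 3: Carrying out the integration.} Substituting into the difference integral and using the symmetry in $x$ and $y$,
\begin{equation*}
\int_{\partial T}\int_{\partial T}\frac{\abs{f(x)-f(y)}^q}{\abs{x-y}^{\dmnMinusOne+\sigma q}}\,d\sigma(y)\,d\sigma(x)
\leq C\int_{\partial T}(\M(\nabla_\tau f)(x))^q\int_{\partial T}\abs{x-y}^{q(1-\sigma)-\dmnMinusOne}\,d\sigma(y)\,d\sigma(x).
\end{equation*}
Since $0<\sigma<1$, the exponent $q(1-\sigma)>0$, and hence the inner integral over the bounded set $\partial T$ is bounded by $C(\diam T)^{q(1-\sigma)}$. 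The maximal theorem (which requires $q>1$) then yields
\begin{equation*}
\doublebar{f}_{\dot B^{q,q}_\sigma(\partial T)}^q
\leq C(\diam T)^{q(1-\sigma)}\doublebar{\M(\nabla_\tau f)}_{L^q(\partial T)}^q
\leq C(\diam T)^{q(1-\sigma)}\doublebar{\nabla_\tau f}_{L^q(\partial T)}^q.
\end{equation*}
Taking $q$-th roots gives the desired estimate.

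\emph{Main obstacle.} The only delicate point is the chaining argument in Step~2, which requires the Lipschitz character of $T$ to produce rectifiable paths on $\partial T$ with controlled length and to apply the on-surface maximal function. Once this is in hand, the remainder is direct computation using $0<\sigma<1$ and $q>1$ to ensure the two integrals converge with the claimed dependence on $\diam T$.
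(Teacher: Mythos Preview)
Your proof is correct and takes a genuinely different route from the paper's. The paper argues by real interpolation: after rescaling to $\diam T=1$ and subtracting the mean, it invokes the identification $(L^q(\partial T),W^q_1(\partial T))_{\sigma,q}\approx L^q\cap \dot B^{q,q}_\sigma$ (with a reference to \cite{MitM13A}), applies the abstract interpolation inequality $\|f\|_{(A_0,A_1)_{\sigma,q}}\leq C\|f\|_{A_0}^{1-\sigma}\|f\|_{A_1}^\sigma$, and then uses the Poincar\'e inequality on $\partial T$ to absorb $\|f\|_{L^q}$ into $\|\nabla_\tau f\|_{L^q}$. Your argument instead plugs the Haj\l asz-type pointwise bound $\abs{f(x)-f(y)}\leq C\abs{x-y}\bigl(\M(\nabla_\tau f)(x)+\M(\nabla_\tau f)(y)\bigr)$ directly into the difference-integral characterization of $\dot B^{q,q}_\sigma$, integrates out the kernel (using $q(1-\sigma)>0$), and finishes with the $L^q$-boundedness of the maximal operator on the Ahlfors-regular surface $\partial T$. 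Your approach is more hands-on and makes the $(\diam T)^{1-\sigma}$ scaling appear transparently from the kernel integral, at the cost of needing the on-surface pointwise inequality (valid because $\partial T$ is doubling and supports a Poincar\'e inequality, hence a PI space). The paper's route is quicker once the interpolation identification is accepted, but black-boxes both the scaling and the role of $q>1$. One small remark: what you call a ``chaining argument using a rectifiable path'' is more commonly proved via a telescoping chain of balls and local Poincar\'e inequalities rather than a literal line integral along a curve; either way the conclusion you use is standard.
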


\begin{proof}%[Proof of Lemma~\ref{lem:atoms:besov:Dirichlet}]
%The result with $\partial T(z,{r})$ replaced by a Euclidean space $\R^\dmnMinusOne$
%Let $T=T(z,{r})$.
Recall from \cite[Section~2.2]{Bar16pB} that if $0<\sigma<1$ and $q\geq 1$, then 
\begin{equation*}\doublebar{\arr f}_{\dot W\!A^{q}_{m-1,\sigma}(\partial T)}^q
=\doublebar{\arr f}_{\dot B^{q,q}_{\sigma}(\partial T)}^q
\approx \int_{\partial T} \int_{\partial T} \frac{\abs{\arr f(x)-\arr f(y)}^q}{\abs{x-y}^{\dmnMinusOne+q\sigma}}\,d\sigma(x)\,d\sigma(y).
\end{equation*}
%We will use interpolation theor
%The bound 
%$\doublebar{\arr f}_{\dot B^{q,q}_{\sigma}(\R^\dmnMinusOne)}
%\leq C \,{r}^{1-\sigma} \doublebar{\nabla_\tau f}_{L^{q}(\R^\dmnMinusOne)}$ is well known and may be proven using interpolation theory. We will use interpolation theory.
%
We may assume without loss of generality that $\int_{\partial T}\arr f\,d\sigma=0$.
By rescaling, it suffices to prove this theorem in the case where $\diam T=1$. 

Recall that $\dot W^q_1(\partial T)$ denotes a homogeneous Sobolev space.
Let $W^q_{1}(\partial T)$ denote the inhomogeneous Sobolev space with norm 
\begin{equation*}\doublebar{f}_{W^q_{1}(\partial T)} 
=\doublebar{f}_{L^q(\partial T)}+\doublebar{f}_{\dot W_1^q(\partial T)}=\doublebar{f}_{L^q(\partial T)} +\doublebar{\nabla_\tau f}_{L^q(\partial T)}.\end{equation*}
It is well known (see, for example, \cite[formulas~(2.401), (2.421), (2.490)]{MitM13A}) that
\begin{align*}
%\label{eqn:L:B:W:interpolation}
\doublebar{f}_{(L^q(\partial T),W^q_1(\partial T))_{\sigma,q}}
&\approx 
	\doublebar{f}_{L^q(\partial T)}+\doublebar{f}_{\dot B^{q,q}_\sigma(\partial T)}
%\\&=\nonumber
%	\biggl(\int_{\partial T} \abs{f(x)}^p\,dx\biggr)^{1/p}
%	+\biggl(\int_{\partial T}\int_{\partial T} \frac{\abs{f(x)-f(y)}}{\abs{x-y}^{\dmnMinusOne+p\smooth}}\,d\sigma(x)\,d\sigma(y)\biggr)^{1/p}.
\end{align*}
where $(\,\cdot\,,\,\cdot\,)_{\sigma,q}$ denotes the real interpolation functor of Lions and Peetre defined in, for example, \cite[Chapter~3]{BerL76}. By standard properties of interpolation spaces, %(see, for example, the proof of \cite[Theorem~3.1.2]{BerL76} with $T$ denoting the identity map)
\begin{equation*}
\doublebar{f}_{(L^q(\partial T),W^q_1(\partial T))_{\sigma,q}}
\leq C\doublebar{f}_{L^q(\partial T)}^{1-\sigma} \doublebar{f}_{W_1^q(\partial T)}^{\sigma}.
\end{equation*}
By the Poincar\'e inequality, $\doublebar{f}_{L^q(\partial T)}\leq C\doublebar{\nabla_\tau f}_{L^q(\partial T)}$. Thus,
$\doublebar{f}_{\dot B^{q,q}_\sigma(\partial T)}\leq C\doublebar{\nabla_\tau f}_{L^q(\partial T)}$,
as desired.
\end{proof}

\begin{lem}\label{lem:atoms:besov:Neumann}
Let $z\in\partial\Omega$ and $\rho>0$, and let $T=T(z,\rho)$. Recall the Lipschitz graph domain~$V$ in the definition of~$T$. Suppose that $\nabla^m\vec u\in L^1(T)$, that $L\vec u=0$ in~$T$, and that $\M_{\mat A,0}^{T} \vec u=0$ on $\partial T\cap\partial V$.

If $1<q\leq \infty$ and $0<\sigma<1$, then
\begin{equation*}%\label{eqn:besov:Neumann}
\doublebar{\M_{\mat A,0}^{T} \vec u}_{\dot N\!A^{q}_{m-1,\sigma-1}(\partial T)}^q\leq 
C\int_{T} \abs{\nabla^m \vec u(x)}^q \dist(x,\partial T\setminus\partial V)^{q-1-q\sigma}\,dx.
\end{equation*}
\end{lem}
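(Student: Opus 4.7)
The plan is to bound the Neumann functional by duality. Using the duality between $\dot B^{q,q}_{\sigma-1}$ and $\dot B^{q',q'}_{1-\sigma}$ (where $1/q+1/q'=1$), together with the quotient structure of $\dot N\!A^q_{m-1,\sigma-1}(\partial T)$, I would realize
$$\doublebar{\M^T_{\mat A,0}\vec u}_{\dot N\!A^q_{m-1,\sigma-1}(\partial T)}\approx\sup\bigl\{|\langle\Tr^T_{m-1}\vec\varphi,\M^T_{\mat A,0}\vec u\rangle_{\partial T}|:\vec\varphi\in C^\infty_0(\R^\dmn),\ \doublebar{\Tr^T_{m-1}\vec\varphi}_{\dot W\!A^{q'}_{m-1,1-\sigma}(\partial T)}\leq 1\bigr\},$$
with the endpoint $q=\infty$ handled by the standard identification of the dual of $\dot B^{1,1}_{1-\sigma}$. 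For any such $\vec\varphi$, the defining formula~\eqref{dfn:Neumann} combined with $L\vec u=0$ in $T$ yields the bulk identity $\langle\Tr^T_{m-1}\vec\varphi,\M^T_{\mat A,0}\vec u\rangle_{\partial T}=\langle\nabla^m\vec\varphi,\mat A\nabla^m\vec u\rangle_T$.

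To exploit the vanishing $\M^T_{\mat A,0}\vec u=0$ on $\partial T\cap\partial V$, I would replace $\vec\varphi$ by a modified extension $\vec\psi$ that agrees with $\vec\varphi$ on $\partial T\setminus\partial V$ (so the Neumann pairing is unchanged) but vanishes to order $m-1$ on a neighborhood of $\partial T\cap\partial V$, and satisfies
$$\int_T|\nabla^m\vec\psi|^{q'}\,\dist(x,\partial T\setminus\partial V)^{q'\sigma-1}\,dx\leq C\doublebar{\Tr^T_{m-1}\vec\varphi}_{\dot W\!A^{q'}_{m-1,1-\sigma}(\partial T)}^{q'}.$$
I would produce $\vec\psi$ by first applying the standard Besov-to-weighted-Sobolev extension from $\dot W\!A^{q'}_{m-1,1-\sigma}(\partial T)$ to $\dot W^{q',1-\sigma}_{m,av}(T)$ of \cite{Bar16pB}, then multiplying by a smooth cutoff equal to $1$ outside a thin collar of $\partial T\cap\partial V$ and equal to $0$ on a thinner collar, with derivatives controlled by the reciprocal of the distance to $\partial T\cap\partial V$. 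A weighted Hardy or Poincar\'e inequality on the collar, valid because $\partial T\cap\partial V$ is a Lipschitz piece sitting transversally to $\partial T\setminus\partial V$, is used to absorb the lower-order terms produced by the derivatives of the cutoff.

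With $\vec\psi$ in hand, H\"older's inequality with weight $\dist(x,\partial T\setminus\partial V)$ and the matching conjugate exponents $-(q-1-q\sigma)q'/q=q'\sigma-1$ gives
$$|\langle\nabla^m\vec\psi,\mat A\nabla^m\vec u\rangle_T|\leq\Lambda\biggl(\int_T|\nabla^m\vec\psi|^{q'}\dist(x,\partial T\setminus\partial V)^{q'\sigma-1}\biggr)^{1/q'}\biggl(\int_T|\nabla^m\vec u|^q\dist(x,\partial T\setminus\partial V)^{q-1-q\sigma}\biggr)^{1/q}.$$
Inserting this bound into the duality formula and taking the supremum over admissible $\vec\varphi$ yields the stated inequality. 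The main obstacle is the construction and weighted estimate for $\vec\psi$: the extension theorems of \cite{Bar16pB} naturally produce weights in terms of $\dist(x,\partial T)$, whereas here the relevant weight is $\dist(x,\partial T\setminus\partial V)$, singular only along a portion of the boundary. Bridging the two requires the cutoff near $\partial T\cap\partial V$ together with a Hardy-type absorption that exploits the fact that $\M^T_{\mat A,0}\vec u$ has a representative supported away from $\partial T\cap\partial V$.
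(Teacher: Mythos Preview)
Your duality setup, the bulk identity $\langle\Tr^T_{m-1}\vec\varphi,\M^T_{\mat A,0}\vec u\rangle_{\partial T}=\langle\nabla^m\vec\varphi,\mat A\nabla^m\vec u\rangle_T$, and the final weighted H\"older step all match the paper's argument. The gap is in the construction of~$\vec\psi$. The two boundary pieces $\partial T\cap\partial V$ and $\partial T\setminus\partial V$ meet along a common ``corner'', so any collar neighborhood of $\partial T\cap\partial V$ necessarily meets $\partial T\setminus\partial V$. Hence a cutoff $\chi$ that vanishes on such a collar gives $\Tr^T_{m-1}(\chi\vec\Phi_i)=\chi\cdot\arr\varphi\neq\arr\varphi$ on the part of $\partial T\setminus\partial V$ near the corner, and the Neumann pairing is \emph{not} preserved. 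Your two requirements on $\vec\psi$---agreement with $\vec\varphi$ on all of $\partial T\setminus\partial V$, and vanishing to order $m-1$ on a neighborhood of $\partial T\cap\partial V$---are simply incompatible. The Hardy step you invoke would absorb derivative losses from the cutoff, but cannot repair the altered boundary trace.

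The paper sidesteps the corner issue by a two-sided extension rather than a cutoff. It extends $\arr\varphi$ to $\vec\Phi_i\in\dot W^{q',1-\sigma,\infty}_{m,av}(T)$ and to $\vec\Phi_e\in\dot W^{q',1-\sigma,\infty}_{m,av}(\R^\dmn\setminus\overline T)$, glues these into $\vec\Phi$ on the larger domain $\widetilde T\supset T$ (the tent before intersecting with~$V$), proves $\vec\Phi\in\dot W^{q',1-\sigma,1}_{m,av}(\widetilde T)$, and then re-extends from $\partial\widetilde T$ to get $\vec F\in\dot W^{q',1-\sigma,\infty}_{m,av}(\widetilde T)$. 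Setting $\arr G=\1_T\,\mat A\nabla^m\vec u$, one has $\langle\nabla^m\vec\Phi,\arr G\rangle_{\widetilde T}=\langle\nabla^m\vec F,\arr G\rangle_{\widetilde T}$ because $\vec\Phi-\vec F$ has zero trace on $\partial\widetilde T$, $\M^T_{\mat A,0}\vec u=0$ on $\partial T\cap\partial V$, and $\partial T\setminus\partial V=\partial T\cap\partial\widetilde T$. H\"older then applies on $\widetilde T$ with the natural weight $\dist(x,\partial\widetilde T)\approx\dist(x,\partial T\setminus\partial V)$. The technical work is in the gluing claim, which is handled by splitting $\widetilde T$ into three regions relative to the two distance functions and using only the extension estimates from \cite{Bar16pB}; no Hardy inequality is needed.
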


\begin{proof}%[Proof of Lemma~\ref{lem:atoms:besov:Neumann}]
By the duality characterization of $\dot N\!A^{q}_{m-1,\sigma-1}$ (see \cite[Section~2.2]{Bar16pB}), we have that
\begin{equation*}\doublebar{\M_{\mat A,0}^{T} \vec u}_{\dot N\!A^{q}_{m-1,\sigma-1}(\partial T)}
\approx \sup_{\arr\varphi\in {\dot W\!A^{q'}_{m-1,1-\sigma}(\partial T)}} \frac{1}{\doublebar{\arr \varphi}_{\dot W\!A^{q'}_{m-1,1-\sigma}(\partial T)}}\abs{\langle \arr\varphi, \M_{\mat A,0}^{T} \vec u\rangle_{\partial T}}
\end{equation*}
where $1/q+1/q'=1$.

Let ${\arr\varphi\in {\dot W\!A^{q'}_{m-1,1-\sigma}(\partial T)}}$.
By \cite[Theorem~4.1]{Bar16pB}, there are functions $\vec\Phi_i\in \dot W^{q',1-\sigma,\infty}_{m,av}(T)$ and $\vec\Phi_e\in \dot W^{q',1-\sigma,\infty}_{m,av}(\R^\dmn\setminus\overline T)$ such that $\Tr_{m-1}^T\vec\Phi_i = \Tr_{m-1}^{\R^\dmn\setminus\overline T}\vec\Phi_e = \arr \varphi$. Here $\dot W^{q',1-\sigma,r}_{m,av}(T)$ and $L^{q',1-\sigma,r}_{av}(T)$ are defined analogously to $\dot W^{q',1-\sigma}_{m,av}(T)$ and $L^{q',1-\sigma}_{av}(T)$, with the norm
\begin{gather*}
\doublebar{\arr \Psi}_{L^{q',1-\sigma,r}_{av}(T)}^{q'}
= \int_T \bigl(\doublebar{\arr \Psi}_{L^r(B(x,\dist(x,\partial T)/2))}\bigr)^{q'} \dist(x,\partial T)^{q'-1-q'(1-\sigma)-\pdmn q'/r}\,dx
.\end{gather*}

Recall the region $\widetilde T=\widetilde T(z,\rho)$ of formula~\eqref{eqn:T}. Observe that $\widetilde T$ is also a bounded, simply connected Lipschitz domain. Furthermore, if $x\in T$, then $\partial T\cap \partial\widetilde T = \partial T \setminus \partial V$ and so $\dist(x,\partial T\setminus \partial V) \approx \dist(x,\partial\widetilde T)$.
Let 
\begin{equation*}\vec\Phi(x) = \begin{cases}\vec\Phi_i(x),&x\in T,\\ \vec\Phi_e(x), &x\in\widetilde T\setminus\overline T.\end{cases}\end{equation*}

\begin{clm}\label{clm:1} We claim that $\vec\Phi \in \dot W^{q',1-\sigma,1}_{m,av}(\widetilde T)$, and that its norm depends only on $\doublebar{\vec \Phi_i}_{\dot W^{q',1-\sigma,\infty}_{m,av}( T)}$ and $\doublebar{\vec \Phi_e}_{\dot W^{q',1-\sigma,\infty}_{m,av}(\R^\dmn\setminus\overline T)}$.
\end{clm}

Suppose that this claim is true. Again by \cite[Theorem~4.1]{Bar16pB}, there is some $\vec F\in \dot W^{q',1-\sigma,\infty}_{m,av}(\widetilde T)$ with $\Tr_{m-1}^{\widetilde T}\vec F=\Tr_{m-1}^{\widetilde T}\vec \Phi$. Let 
\begin{equation*}\arr G(x) = \begin{cases}\mat A(x)\nabla^m \vec u(x),&x\in T,\\0&\text{otherwise}.\end{cases}\end{equation*}
Then 
\begin{equation*}{\langle \arr\varphi,\M_{\mat A,0}^{T} \vec u\rangle_{\partial T}}
= \langle \nabla^m \vec\Phi, \mat A\nabla^m \vec u\rangle_T
=\langle \nabla^m \vec \Phi,\arr G\rangle_{\widetilde T}
=\langle \nabla^m \vec F,\arr G\rangle_{\widetilde T}
\end{equation*}
and so
\begin{align*}
\abs{\langle \arr\varphi, \M_{\mat A,0}^{T} \vec u\rangle_{\partial T}}
&\leq 
	C \doublebar{\nabla^m\vec F}_{L^{q',1-\sigma,\infty}_{av}(\widetilde T)}
	\doublebar{\arr G}_{L^{q,\sigma,1}_{av}(\widetilde T)}
\\&\leq 
	C\doublebar{\arr \varphi}_{\dot W\!A^{q'}_{m-1,1-\sigma}(\partial T)}
	\doublebar{\1_T\nabla^m\vec u}_{L^{q,\sigma,1}_{av}(\widetilde T)}.
\end{align*}
Because $q\geq 1$, arguing analogously to the proof of the bound~\eqref{eqn:unaveraged<averaged}, we may show that 
\begin{equation*}\doublebar{\1_T\nabla^m\vec u}_{L^{q,\sigma,1}_{av}(\widetilde T)}^q
\leq C\int_T \abs{\nabla^m \vec u(x)}^q \dist(x,\partial T\setminus\partial V)^{q-1-q\sigma}\,dx\end{equation*}
and so the proof is complete.

We now must establish the claim; that is, we must bound
\begin{equation*}\int_{\widetilde T} \biggl(\fint_{B(x,\widetilde T)} \abs{\nabla^m \vec\Phi}\biggr)^{q'} \dist(x,\partial\widetilde T)^{q'-1-q'(1-\sigma)}\,dx\end{equation*}
where $B(x,\widetilde T) = B(x,a\dist(x,\partial\widetilde T))$ for some $0<a<1$.

Let $0<\eta<1$.
Let 
\begin{align*}
T_1 &= \{x\in \widetilde T: \eta\dist(x,\partial \widetilde T)
\leq \dist(x,\partial T)\leq\frac{1}{\eta}\dist(x,\partial \widetilde T)\}
,\\
T_2 &= \{x\in \widetilde T: \dist(x,\partial T)< \eta\dist(x,\partial \widetilde T)\}
,\\
T_3 &= \{x\in \widetilde T: \dist(x,\partial \widetilde T)< \eta\dist(x,\partial T)\}
.\end{align*}

If $x\in T_1$, then $\dist(x,\partial T)\approx\dist(x,\partial\widetilde T)$. Thus,
\begin{align*}
\int_{T_1\cap T} \biggl(\fint_{B(x,\widetilde T)} \abs{\nabla^m \vec\Phi}\biggr)^{q'} \dist(x,\partial\widetilde T)^{q'-1-q'(1-\sigma)}\,dx
&\leq C\doublebar{\nabla^m\vec \Phi_i}_{L^{q',1-\sigma,\infty}_{av}(T)}
,\\
\int_{T_1\setminus T} \biggl(\fint_{B(x,\widetilde T)} \abs{\nabla^m \vec\Phi}\biggr)^{q'} \dist(x,\partial\widetilde T)^{q'-1-q'(1-\sigma)}\,dx
&\leq C\doublebar{\nabla^m\vec \Phi_e}_{L^{q',1-\sigma,\infty}_{av}(\R^\dmn\setminus\overline T)}
.\end{align*}

We now consider $T_2$.
Let $\Delta=\{x'\in\R^\dmnMinusOne:(x',\psi(x'))\in \partial T\cap \partial V\}$. We may choose $\eta$ small enough that, if $(x',t)\in T_2$, then $x'\in\Delta^O$, where $\Delta^O$ is the interior (in $\R^\dmnMinusOne$) of~$\Delta$. 

Let $\mathcal{G}$ be a decomposition of $\Delta^O$ into Whitney cubes, so $\Delta^O=\cup_{Q\in\mathcal{G}} Q$, $\ell(Q)\approx \dist(Q,\R^\dmnMinusOne\setminus\Delta^O)$, and if $Q$ and $R$ are distinct cubes in~$\mathcal{G}$ then $Q$ and $R$ have disjoint interiors. 

Let $\tau(Q)=\{(x',t):x'\in Q, \psi(x')-c_0\ell(Q)<t<\psi(x')+c_0\ell(Q)\}$ be two-sided versions of the tents in \cite[Lemma~3.7]{Bar16pB}. 
By the bound \cite[formula~(3.8)]{Bar16pB}, %\eqref{eqn:L:L1:2}, 
we have that if $\arr\Psi\big\vert_V\in {L^{q',\smooth,1}_{av}(V)}$ and $\arr\Psi\big\vert_{\R^\dmn\setminus \overline V}\in {L^{q',\smooth,1}_{av}(\R^\dmn\setminus \overline V)}$, then
\begin{equation*}\sum_{Q\in \mathcal{G}} \biggl(\fint_{\tau(Q)} \abs{\arr\Psi}\biggr)^{q'} \ell(Q)^{\dmnMinusOne+q'-q'\smooth} \leq C \doublebar{\arr\Psi}_{L^{q',\smooth,1}_{av}(V)}+\doublebar{\arr\Psi}_{L^{q',\smooth,1}_{av}(\R^\dmn\setminus \overline V)}.\end{equation*}
If $a$ and $\eta$ are small enough, then
\begin{multline*}\int_{T_2} \biggl(\fint_{B(x,\widetilde T)} \abs{\nabla^m \vec\Phi}\biggr)^{q'} \dist(x,\partial\widetilde T)^{q'-1-q'(1-\sigma)}\,dx
\\\leq
C\sum_{Q\in\mathcal{G}} \smash{\biggl(\fint_{\tau(Q)} \abs{\nabla^m\vec\Phi}\biggr)^{q'} \ell(Q)^{\dmnMinusOne+q'-q'(1-\sigma)}}
\end{multline*}
which by the above remarks is at most 
\begin{equation*}C\doublebar{\nabla^m\vec \Phi_i}_{L^{q',1-\sigma,\infty}_{av}(T)}+C\doublebar{\nabla^m\vec \Phi_e}_{L^{q',1-\sigma,\infty}_{av}(\R^\dmn\setminus\overline T)}.\end{equation*}

Finally, we come to $T_3$. Observe that $T_3\subset \widetilde T\setminus \overline T$ is a region lying near~$\partial\widetilde T$. We may write $T_3\subset \cup_{R\in \mathcal{H}} R$, where $\mathcal{H}$ is a collection of pairwise-disjoint cubes in $\R^\dmn$ (not $\R^\dmnMinusOne$) that satisfy $\ell(R)\approx \dist(R,\partial T)$.

Then 
\begin{multline*}\int_{T_3} \biggl(\fint_{B(x,\widetilde T)} \abs{\nabla^m \vec\Phi}\biggr)^{q'} \dist(x,\partial\widetilde T)^{q'-1-q'(1-\sigma)}\,dx
\\\leq
C\sum_{R\in\mathcal{H}} \int_R \biggl(\fint_{B(x,\widetilde T)} \abs{\nabla^m \vec\Phi_e}^2\biggr)^{q'/2} \dist(x,\partial\widetilde T)^{q'-1-q'(1-\sigma)}\,dx
\end{multline*}
and by \cite[Lemma~\ref*{P:lem:L:L-infinity}]{Bar16pB},
\begin{equation*}\int_R \biggl(\fint_{B(x,\widetilde T)} \abs{\nabla^m \vec\Phi}^2\biggr)^{q'/2} \dist(x,\partial\widetilde T)^{q'-1-q'(1-\sigma)}\,dx
\leq C\doublebar{\nabla^m\vec\Phi_e}_{L^\infty(R)}^{q'} \ell(R)^{\dmnMinusOne+q'\sigma}.\end{equation*}
But 
\begin{equation*}\sum_{R\in\mathcal{H}} \doublebar{\nabla^m\vec\Phi_e}_{L^\infty(R)}^{q'}  \ell(R)^{\dmnMinusOne+q'\sigma}
\leq C \doublebar{\nabla^m\vec\Phi_e}_ {L^{q',1-\sigma,\infty}_{av}(\R^\dmn\setminus\overline T)}.\end{equation*}
This completes the proof.
\end{proof}

By Lemma~\ref{lem:atoms:besov:Dirichlet} or Lemma~\ref{lem:atoms:besov:Neumann}, we may bound the integrands on the right hand sides of formulas~\eqref{eqn:Dirichlet:tent:boundary} and~\eqref{eqn:Neumann:tent:boundary}; evaluating the integrals yields the bound
\begin{equation*}%
\int_{T_{1/2}} \abs{\nabla^m \vec u(x)}^p\,\dist(x,\partial\Omega)^{p-1-p\smooth}\,dx
\\\leq
C R^{\dmn +p-1- p\smooth-\pdmn p/\widetilde q_+}
\doublebar{\nabla^m \vec u}_{L^{\widetilde q_+}(T_{5/6})}^p
.\end{equation*}
Because $\widetilde q_+\leq 2$, we may use H\"older's inequality to see that
\begin{equation*}%
\int_{T_{1/2}} \abs{\nabla^m \vec u(x)}^p\,\dist(x,\partial\Omega)^{p-1-p\smooth}\,dx
\\\leq
C R^{\dmn +p-1- p\smooth-\pdmn p/2}
\doublebar{\nabla^m \vec u}_{L^2(T_{5/6})}^p
.
\end{equation*}

We now apply {\cite[Lemmas~9 and~16]{Bar16}}; these lemmas are boundary Caccioppoli inequalities, with Dirichlet or Neumann boundary conditions. 
This yields the bound
\begin{equation*}%\label{eqn:atoms:2}
\int_{T_{1/2}} \abs{\nabla^m \vec u(x)}^p\,\dist(x,\partial\Omega)^{p-1-p\smooth}\,dx
\leq
	CR^{\dmnMinusOne-p\smooth-\pdmn p/2} \doublebar{\nabla^{m-1}(\vec u-\vec P)}_{L^2(T_{6/7})}^p
\end{equation*}
where $\vec P=0$ if $\Tr_{m-1}^\Omega\vec u=0$, and where $\vec P$ is an arbitrary polynomial of degree $m-1$ (and so $\nabla^{m-1}\vec P$ is an arbitrary constant array) if $\M_{\mat A,\arr\Phi}^\Omega\vec u=0$.

By Theorem~\ref{thm:Meyers}, we have that
\begin{multline}\label{eqn:atoms:3}
\int_{T_{1/2}} \abs{\nabla^m \vec u(x)}^p\,\dist(x,\partial\Omega)^{p-1-p\smooth}\,dx
\\\leq
	CR^{\dmnMinusOne-\pdmn p/q_- -\smooth p} \doublebar{\nabla^{m-1}(\vec u-\vec P)}_{L^{q_-}(T_{7/8})}^p
.\end{multline}

We must now bound $\nabla^{m-1} (\vec u-\vec P)$ in $T_{7/8}$. We will use the following lemma. 
\begin{lem}\label{lem:poincare:tents}
Let $\Omega$ be a Lipschitz domain and let $T=T(z,\rho)$ for some $z\in\partial\Omega$ and some $\rho>0$ small enough that $T(z,(8/7)\rho)\subset\Omega$ and $\partial T\cap \partial V=\partial T\cap\partial\Omega$. Let $v$ be a function defined in~$T$ and let $0\leq\tau\leq\rho(M_0-M)/M_0$.

Suppose that $1\leq q<\infty$ and $\sigma>0$.
Then
\begin{multline*}
\biggl(\int_T \abs{v(x)}^q\,dx\biggr)^{1/q}
\leq
C\rho^{1/q+\sigma}
\biggl(\int_T \abs{\nabla v(x)}^q \dist(x,\partial\Omega)^{q-1-q\sigma}\,dx\biggr)^{1/q}
\\
+ C\rho^{1/q}\biggl(\int_{(\partial V+(0,\tau))\cap  T} \abs{v(x)}^q\,d\sigma(x)\biggr)^{1/q}
.\end{multline*}
\end{lem}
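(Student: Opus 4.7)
The lemma is a weighted Poincar\'e inequality on the tent $T$ with a trace on the shifted surface $(\partial V + (0,\tau))\cap T$. The plan is the standard one: express $v(x)$ via the fundamental theorem of calculus along a suitable vertical segment from $x$ to a point on $(\partial V + (0,\tau))\cap T$, apply a weighted H\"older inequality to the gradient integral, and then integrate the pointwise bound over $T$.

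For each $x = (x',t) \in T$, I would use the vertical segment from $x$ down to $(x',\psi(x')+\tau)$. By the construction of $T$ in Definition~\ref{dfn:tent} (a cone of slope $M_0$ capped with a sphere, intersected with $V = \{t > \psi(x')\}$, where $\psi$ has Lipschitz constant $M < M_0$) and the hypothesis $\tau \leq \rho(M_0-M)/M_0$, this segment lies in $T$ whenever $(x',\psi(x')+\tau) \in T$, which holds over a region that covers $T$ up to a narrow sliver near the outer cone walls. The fundamental theorem of calculus combined with H\"older's inequality with weight $w(s) = (s-\psi(x'))^{q-1-q\sigma}$ (comparable to $\dist((x',s),\partial\Omega)^{q-1-q\sigma}$ by the Lipschitz character of $\partial V$) then yields
\[
\biggl| \int_{\psi(x')+\tau}^t \partial_s v(x',s)\,ds \biggr|^q
\leq \biggl( \int_{\psi(x')+\tau}^t |\nabla v|^q w\,ds \biggr) \biggl( \int_{\psi(x')+\tau}^t (s-\psi(x'))^{-1+q\sigma/(q-1)}\,ds \biggr)^{q-1}.
\]
Since $\int_\tau^{C\rho} u^{-1+q\sigma/(q-1)}\,du \leq C\rho^{q\sigma/(q-1)}$ (with the case $q=1$ handled directly by $(s-\psi(x'))^{-\sigma} \geq C\rho^{-\sigma}$), I obtain the pointwise estimate
\[
|v(x',t)|^q \leq C|v(x',\psi(x')+\tau)|^q + C\rho^{q\sigma} \int_{\psi(x')+\tau}^t |\nabla v(x',s)|^q \dist((x',s),\partial\Omega)^{q-1-q\sigma}\,ds.
\]

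Integrating over $T$ in $dx = dx'\,dt$, the trace term contributes at most $C\rho \int_{(\partial V + (0,\tau))\cap T} |v|^q\,d\sigma$, since the $t$-extent of $T$ at each fixed $x'$ is $\leq C\rho$ and the surface measure on the Lipschitz graph is comparable to $dx'$. For the gradient term, Fubini exchanges the order of the $t$- and $s$-integrals; the inner $t$-integration at fixed $(x',s)$ gains another factor of $C\rho$, producing $C\rho^{1+q\sigma} \int_T |\nabla v|^q \dist^{q-1-q\sigma}\,dx$. Taking $q$-th roots yields the claimed inequality.

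The main obstacle is the geometric claim that the vertical segments stay in $T$ and reach $(\partial V + (0,\tau))\cap T$. The bound $\tau \leq \rho(M_0-M)/M_0$ is precisely what forces $(x',\psi(x')+\tau) \in T$ over the bulk of $T$, namely the region where $|x'-z'| < M_0\rho/(M_0-M)$; here the cone's lower boundary lies below $\partial V$, so the vertical segment exits $T$ through $\partial V$. For $x$ in the complementary annular sliver near the outer cone walls (volume controlled by the cone geometry), I would replace the vertical segment by the ray from the apex $(z',t_0 - M_0\rho)$ through $x$, which stays in the cone by convexity and exits $T$ through $\partial V$; the analogous FTC--H\"older--Fubini computation on this family of rays, whose overlap is bounded by the cone's aperture, produces the same estimate.
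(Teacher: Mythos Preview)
Your approach is correct in spirit, but the paper's proof is cleaner in a way worth noting. Instead of splitting $T$ into a ``bulk'' region handled by vertical segments and an ``annular sliver'' handled by rays from the apex, the paper uses a single parametrization throughout: polar coordinates centered at the cone vertex $x_T = (z', t_0 - M_0\rho)$. Every point of $T$ is written as $x_T + s\omega$ with $\omega$ in a spherical cap $S$ and $a(\omega) < s < b(\omega)$; the lower endpoint $a(\omega)$ is where the ray first crosses $\partial V$, so $s - a(\omega) \approx \dist(x_T + s\omega, \partial\Omega)$. The shifted surface $(\partial V + (0,\tau)) \cap T$ is then $\{x_T + \mu(\omega)\omega : \omega \in S\}$ for a single function $\mu$, and the hypothesis $0 \leq \tau \leq \rho(M_0-M)/M_0$ guarantees $a(\omega) \leq \mu(\omega) \leq b(\omega)$ for every $\omega$ at once. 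The FTC--H\"older step is then run along each ray with weight $(s-a(\omega))^{q-1-q\sigma}$, and integration in $\omega$ with the polar Jacobian $r^{\dmnMinusOne}\approx\rho^{\dmnMinusOne}$ finishes.

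What the paper's route buys is exactly the elimination of your case split: you never need to check whether $(x', \psi(x')+\tau)$ lies in $T$, because rays from the apex automatically enter $T$ through $\partial V$ and cross the $\tau$-shifted surface inside $T$ for every direction in~$S$. Your vertical-segment argument is more elementary where it applies, and the weight comparison $(s-\psi(x')) \approx \dist(\cdot,\partial\Omega)$ is completely transparent there; but patching it with the ray argument on the sliver---and reconciling two different parametrizations of the trace surface when you assemble the boundary term---is bookkeeping the paper's unified polar approach avoids entirely.
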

Here $\partial V+(0,\tau)=\{x+(0,\tau):x\in\partial V\}$.
\begin{proof}[Proof of Lemma~\ref{lem:poincare:tents}]
Let $x_T$ be the vertex of the cone $\widetilde T(z,\rho)$, and let $S=\{\omega\in\R^\dmn:\abs\omega=1,\allowbreak\,x_T+s\omega\in T$ for some $s>0\}$. 
Then there is some $a(\omega)$ and $b(\omega)$ so that $T=\{x_T+s\omega:\omega \in S,\>a(\omega)<s<b(\omega)\}$.
Thus,
\begin{align*}
\int_T\abs{v(x)}^q\,dx
&=
	\int_S \int_{a(\omega)}^{b(\omega)} \abs{v(x_T+r\omega)}^{q} \,dr\,r^{\dmnMinusOne}  \,d\sigma(\omega)
\end{align*}
where $d\sigma(\omega)$ denotes surface measure on the unit sphere in~$\R^\dmn$. 

If $\omega\in S$ and $\tau\geq 0$ then there is a unique $\mu(\omega)>0$ such that $x_T+\mu(\omega)\,\omega\in \partial V+(0,\tau)$. If $0\leq \tau\leq \rho(M_0-M)/M_0$, then $x_T+\mu(\omega)\,\omega\in T$ and so $a(\omega)\leq\mu(\omega)\leq b(\omega)$; furthermore, 
${(\partial V+(0,\tau))\cap  T}=\{x_T+\mu(\omega)\,\omega:\omega \in S\}$.
Then 
\begin{multline*}
\biggl(\int_T\abs{v(x)}^q\,dx\biggr)^{1/q}
\leq
	\biggl(\int_S \int_{a(\omega)}^{b(\omega)} \abs[bigg]{\int_{\mu(\omega)}^r\abs{\nabla v(x_T+s\omega)}\,ds}^{q} \,r^{\dmnMinusOne} \,dr\,d\sigma(\omega)\biggr)^{1/q}
	\\+
	\biggl(\int_S \abs{v(x_T+\mu(\omega)\omega)}^{q} \int_{a(\omega)}^{b(\omega)} r^{\dmnMinusOne} \,dr\,d\sigma(\omega)\biggr)^{1/q}
.\end{multline*}
The second term on the right hand side is at most
\begin{equation*}C\biggl(\rho\int_{(\partial V+(0,\tau))\cap  T} \abs{v(x)}^q\,d\sigma(x)\biggr)^{1/q}\end{equation*}
where $d\sigma(x)$ denotes surface measure on $\partial V+(0,\tau)$.

Let $I$ denote the first term. If $q>1$, then by H\"older's inequality,
\begin{align*}I^q
&\leq
	C\int_S \int_{a(\omega)}^{b(\omega)} 
	{\int_{a(\omega)}^{b(\omega)}\abs{\nabla v(x_T+s\omega)}^{q} (s-a(\omega))^{q-1-q\sigma}\,ds}
	\\&\qquad\qquad\qquad\times
	\biggl(\int_{\mu(\omega)}^r (s-a(\omega))^{-1+q'\sigma}\,ds\biggr)^{q/q'}
	\,r^{\dmnMinusOne}\,dr \,d\sigma(\omega)
.\end{align*}
If $\sigma>0$ and $q'>0$, then the second integral $ds$ converges. Evaluating, we see that
\begin{align*}I^q
&\leq
	C\rho^{\dmn+q\sigma}
	\int_S 
	{\int_{a(\omega)}^{b(\omega)}\abs{\nabla v(x_T+s\omega)}^{q} (s-a(\omega))^{q-1-q\sigma}\,ds}\,d\sigma(\omega)
.\end{align*}
If $q=1$ and $\sigma>0$, then $q-1-q\sigma=-\sigma<0$ and it is straightforward to show that this inequality is still valid.

Observe that if $a(\omega)<s<b(\omega)$, then $s\approx\rho$ and $s-a(\omega)\approx \dist(x_T+s\omega,\partial\Omega)$.
Thus,
\begin{align*}I^q
&\leq
	C\rho^{1+q\sigma}
	\int_T\abs{\nabla v(x)}^{q} \dist(x,\partial\Omega)^{q-1-q\sigma}\,dx
.\end{align*}
This completes the proof.
\end{proof}

If $\Tr_{m-1}^\Omega \vec u=0$, then applying the bound~\eqref{eqn:atoms:3} and Lemma~\ref{lem:poincare:tents} with $v=\nabla^{m-1}\vec u$ and $\tau=0$ yields that
{\multlinegap=0pt\begin{multline}\label{eqn:atoms:4}
\int_{T_{1/2}} \abs{\nabla^m \vec u(x)}^p\,\dist(x,\partial\Omega)^{p-1-p\smooth}\,dx
\\\leq
	CR^{\pdmnMinusOne(1- p/q_-) -\smooth p+p\sigma_-}
	\biggl(\int_{T_{7/8}} \abs{\nabla^{m}\vec u(x)}^{q_-}\dist(x,\partial\Omega)^{q_--1-q_-\sigma_-}\biggr)^{p/q_-}
.\end{multline}}

If $\M_{\mat A}^\Omega \vec u=0$, then averaging over a range of~$\tau$ yields that 
{\multlinegap=0pt\begin{multline*}
\int_{T_{1/2}} \abs{\nabla^m \vec u(x)}^p\,\dist(x,\partial\Omega)^{p-1-p\smooth}\,dx
\\\leq
	CR^{\pdmnMinusOne(1- p/q_-) -\smooth p+p\sigma_-} \biggl(
\int_{T_{7/8}} \abs{\nabla^m \vec u(x)}^{q_-} \dist(x,\partial\Omega)^{q_--1-q_-\sigma_-}\,dx\biggr)^{p/q_-}
\\+ CR^{\dmnMinusOne-\pdmn p/q_- -\smooth p} \biggl(\int_{U} \abs{\nabla^{m-1} \vec u(x)-\nabla^{m-1}\vec P(x)}^q_-\,dx
	\biggr)^{p/q_-}
\end{multline*}}%
for an open set~$U\subset T_{7/8}$ with $\dist(U,\partial\Omega)\geq R/C$. Choosing $\vec P$ appropriately and applying the Poincar\'e inequality, we see that the bound~\eqref{eqn:atoms:4} is still valid.

Because $q_-\leq 2$, by the bound~\eqref{eqn:unaveraged<averaged} we have that 
\begin{equation*}\int_{T_{7/8}} \abs{\nabla^m u(x)}^{q_-} \dist(x,\partial\Omega)^{q_--1-q_-\sigma_-}\,dx
\leq C \doublebar{\vec u}_{\dot W^{q_-,\smooth_-}_{m,av}(\Omega)}^{q_-}.\end{equation*}
By the bounds~\eqref{eqn:atoms:53} and~\eqref{eqn:atoms:4}, we have that
\begin{multline*}
\int_\Xi \abs{\nabla^m \vec u(x)}^p\,\dist(x,\partial\Omega)^{p-1-p\smooth}\,dx
\\\leq C
	\doublebar{\nabla^{m}\vec u}_{\dot W^{q_-,\smooth_-}_{m,av}(\Omega)}^p
\sum_{j} 
\abs{z_j-x_0}^{\dmnMinusOne -\smooth p-\pdmnMinusOne p/q_-+p\sigma_-} 
.\end{multline*}
Observe that  ${\dmnMinusOne -\smooth p-\pdmnMinusOne p/q_-+p\sigma_-}<0$.
Recall that for any $k\geq 0$, there are at most $C$ points $z_j$ with $2^k\leq \abs{z_j-x_0}\leq 2^{k+1}$. Thus, the sum may be bounded by a convergent geometric series, and we have that 
\begin{multline*}
\int_\Xi \abs{\nabla^m \vec u(x)}^p\,\dist(x,\partial\Omega)^{p-1-p\smooth}\,dx
\\\leq C
	\doublebar{\nabla^{m}\vec u}_{\dot W^{q_-,\sigma_-}_{m,av}(\Omega)}^p
\dist(x_0,\partial\Omega)^{\dmnMinusOne -\smooth p-\pdmnMinusOne p/q_-+p\sigma_-} 
.\end{multline*}
Recall that $\doublebar{\nabla^{m}\vec u}_{\dot W^{q_-,\smooth_-}_{m,av}(\Omega)}\leq C \doublebar{\arr\Phi}_{L^{q_-,\smooth_-}_{av}(\Omega)}$. Thus, by the bound~\eqref{eqn:whitney:q:sigma}, 
\begin{equation*}
\int_\Xi \abs{\nabla^m \vec u(x)}^p\,\dist(x,\partial\Omega)^{p-1-p\smooth}\,dx
\leq C
	\doublebar{\arr\Phi}_{L^{p,\smooth}_{av}(\Omega)}^p
.\end{equation*}
This completes the proof of Lemma~\ref{lem:atoms:boundary}.

\section{Proof of Theorem~\ref{thm:atoms}}
\label{sec:proof}

%Let $\Omega$ be an open set in $\R^\dmn$, and 
Let $\mathcal{G}$ be a grid of Whitney cubes in~$\Omega$; then $\Omega=\cup_{Q\in\mathcal{G}} Q$, the cubes in~$\mathcal{G}$ have pairwise-disjoint interiors, and if $Q\in\mathcal{G}$ then the side-length $\ell(Q)$ of~$Q$ satisfies $\ell(Q)\approx\dist(Q,\partial\Omega)$. As observed in \cite[Section~3]{Bar16pB},
if $0<p<\infty$ and $\arr H\in L^{p,\smooth}_{av}(\Omega)$, then
\begin{equation*}
%\label{eqn:L:norm:whitney}
\doublebar{\arr H}_{L^{p,\smooth}_{av}(\Omega)}
\approx \biggl( \sum_{Q\in\mathcal{G}} \biggl(\fint_Q \abs{\arr H}^2\biggr)^{p/2} \ell(Q)^{\dmnMinusOne+p-p\smooth}\biggr)^{1/p}
\end{equation*}
where the comparability constants depend on~$p$, $\smooth$, and the comparability constants for Whitney cubes in the relation $\ell(Q)\approx\dist(Q,\partial\Omega)$. %(This equivalence is still valid in the case $p=\infty$ if we replace the sum over cubes by an appropriate supremum.) This implies that we may replace the balls $B(x,\dist(x,\partial\Omega)/2)$ in the definition~\eqref{eqn:L:norm:2} of $L^{p,\smooth}_{av}(\Omega)$ by balls $B(x,a\dist(x,\partial\Omega))$ for any $0<a<1$, and produce an equivalent norm.

Choose some $\arr H\in L^{p,\smooth}_{av}(\Omega)$.
%Let $\mathcal{G}$ be a grid of Whitney cubes, as in the norm~\eqref{eqn:L:norm:whitney}. 
For each $Q\in \mathcal{G}$, let $\vec u_Q \in {\dot W^{q_-,\sigma_-}_{m,av}(\Omega)}$ be as in Lemma~\ref{lem:atoms:boundary} with $\arr\Phi=\1_Q\arr H$. 

Because $p\leq 1$, we have that
\begin{equation*}\doublebar[Big]{\sum_{Q\in\mathcal{G}} \vec u_Q}_{\dot W^{p,\smooth}_{m,av}(\Omega)}^p 
\leq 
\sum_{Q\in\mathcal{G}} \doublebar{\vec u_Q}_{\dot W^{p,\smooth}_{m,av}(\Omega)}^p
\leq 
\sum_{Q\in\mathcal{G}} 
\frac{\doublebar{\vec u_Q}_{\dot W^{p,\smooth}_{m,av}}^p} {\doublebar{\1_Q\arr H}_{L^{p,\smooth}_{av}}^p} 
{\doublebar{\1_Q\arr H}_{L^{p,\smooth}_{av}(\Omega)}^p}
.\end{equation*}
%By elementary arguments, we have that
%\begin{align*}
%\sum_{Q\in\mathcal{G}} \doublebar{\vec u_Q}_{\dot W^{p,\smooth}_{m,av}(\Omega)}^p
%&\leq 
%\sum_{Q\in\mathcal{G}} 
%\frac{\doublebar{\vec u_Q}_{\dot W^{p,\smooth}_{m,av}}^p} {\doublebar{\1_Q\arr H}_{L^{p,\smooth}_{av}}^p} 
%{\doublebar{\1_Q\arr H}_{L^{p,\smooth}_{av}(\Omega)}^p}
%%\\&\leq 
%%%\sup_{Q\in\mathcal{G}} \frac{\doublebar{\vec u_Q}_{\dot W^{p,\smooth}_{m,av}(\Omega)}^p} {\doublebar{\1_Q\arr H}_{L^{p,\smooth}_{av}(\Omega)}^p} 
%%%\sum_{Q\in\mathcal{G}} 
%%%{\doublebar{\1_Q\arr H}_{L^{p,\smooth}_{av}(\Omega)}^p}
%%%\\&
%%%\leq
%%\biggl(\sup_{Q\in\mathcal{G}} \frac{\doublebar{\vec u_Q}_{\dot W^{p,\smooth}_{m,av}}^p} {\doublebar{\1_Q\arr H}_{L^{p,\smooth}_{av}}^p} 
%%\biggr)
%%{\doublebar{\arr H}_{L^{p,\smooth}_{av}(\Omega)}^p}
%.\end{align*}
%and so we need only establish a bound on ${\doublebar{\vec u_Q}_{\dot W^{p,\smooth}_{m,av}}}/{\doublebar{\1_Q\arr H}_{L^{p,\smooth}_{av}}}$ uniform in~$Q$.
By Lemmas~\ref{lem:unaveraged} and~\ref{lem:atoms:boundary}, we have that
\begin{equation*}
\doublebar{\vec u_Q}_{\dot W^{p,\smooth}_{m,av}(\Omega)}
\leq C
	\doublebar{\1_Q\arr H}_{L^{p,\smooth}_{av}(\Omega)}
\end{equation*}
for all~$Q$, and so
\begin{align*}
\doublebar[Big]{\sum_{Q\in\mathcal{G}} \vec u_Q}_{\dot W^{p,\smooth}_{m,av}(\Omega)}^p 
&\leq 
C \sum_{Q\in\mathcal{G}} {\doublebar{\1_Q\arr H}_{L^{p,\smooth}_{av}(\Omega)}^p}
\leq
C^2  {\doublebar{\arr H}_{L^{p,\smooth}_{av}(\Omega)}^p}
.\end{align*}
Thus, for any $\arr H\in L^{p,\smooth}_{av}(\Omega)$ there exists a solution $\vec u=\sum_Q\vec u_Q$ to the problem \eqref{eqn:Dirichlet:p:smooth} or~\eqref{eqn:Neumann:p:smooth} with boundary data $\arr f=0$ or $\arr g=0$. By \cite[Lemma~\ref*{P:lem:zero:boundary}]{Bar16pA}, we may extend to arbitrary boundary values. Finally, recall that by Lemma~\ref{lem:unique:atoms}, we have uniqueness of solutions to the problems \eqref{eqn:Dirichlet:p:smooth} or~\eqref{eqn:Neumann:p:smooth}. This completes the proof of Theorem~\ref{thm:atoms}.

\section{Known results in the notation of the present paper}
\label{sec:A0}

In Section~\ref{sec:A0:introduction}, we described new well posedness results arising from Theorem~\ref{thm:atoms} and from known results from \cite{MazMS10} and from \cite{MitMW11,MitM13B}. However, the results of \cite{MazMS10} and \cite{MitMW11,MitM13B} were stated not in terms of the spaces $L^{p,\smooth}_{av}(\Omega)$ and $\dot W^{p,\smooth}_{m,av}(\Omega)$ of the present paper, but in terms of other, related spaces. In \cite[Section~\ref*{P:sec:biharmonic}]{Bar16pA}, results in terms of  $L^{p,\smooth}_{av}(\Omega)$ and $\dot W^{p,\smooth}_{m,av}(\Omega)$ were derived from the results of~\cite{MitM13B}; in this section, we shall similarly derive results in terms of  $L^{p,\smooth}_{av}(\Omega)$ and $\dot W^{p,\smooth}_{m,av}(\Omega)$ from the results of~\cite{MazMS10}.

We begin by recalling the following result from \cite{MazMS10}.

\begin{thm}[{\cite[Theorem~8.1]{MazMS10}}]\label{thm:MazMS10}
Let $\Omega\subset\R^\dmn$ be a bounded Lipschitz domain and let $L$ be an elliptic differential operator of order~$2m$ of the form~\eqref{eqn:divergence}, defined in the weak sense of formula~\eqref{eqn:weak}, associated to coefficents~$\mat A$ that satisfy the ellipticity conditions \eqref{eqn:elliptic:bounded} and~\eqref{eqn:elliptic:everywhere}.

%Suppose that $\mat A\in VMO(\Omega)$ and that the unit outward normal $\nu$ to~$\partial\Omega$ satisfies $\nu\in VMO(\partial\Omega)$.

Then there is some $c>0$ such that, if $0<\smooth<1$, $1<p<\infty$, and 
\begin{equation}\label{eqn:MazMS10:bound}\delta(\mat A,\Omega) \leq c \frac{\smooth^2(1-\smooth)^2(1/p)(1-1/p)}{\smooth(1-\smooth)+(1/p)(1-1/p)}
\end{equation}
where $\delta(\mat A,\Omega)$ is as in formula~\eqref{eqn:MazMS10},
then the Dirichlet problem
\begin{equation*}
\left\{\begin{gathered}
L\vec u=F \text{ in }\Omega, \quad\partial_\nu^k\vec u=g_k \text { on $\partial\Omega$ for $0\leq k\leq m-1$},  
\\
\doublebar{\vec u}_{W^{p,\smooth}_m(\Omega)} \leq C\doublebar{\vec g}_{W^p_{m-1+\smooth}(\partial\Omega)} + C\doublebar{F}_{V^{p,\smooth}_{-m}(\Omega)}
\end{gathered}\right.
\end{equation*}
is well posed.
\end{thm}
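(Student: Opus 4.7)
The plan is to establish this result by reducing, via localization and freezing of coefficients, to the model case of a constant-coefficient operator on the upper half-space, where well posedness of the Dirichlet problem in fractional Sobolev spaces is classical.

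First I would reduce to the case of zero Dirichlet data. By the trace/extension theory for Besov spaces on Lipschitz domains, there exists $\vec U \in W^{p,\smooth}_m(\Omega)$ with $\partial_\nu^k\vec U = g_k$ on $\partial\Omega$ for $0\leq k\leq m-1$ and with $\doublebar{\vec U}_{W^{p,\smooth}_m(\Omega)}\leq C\doublebar{\vec g}_{W^p_{m-1+\smooth}(\partial\Omega)}$. Setting $\widetilde F = F - L\vec U \in V^{p,\smooth}_{-m}(\Omega)$, it suffices to solve $L\vec v = \widetilde F$ with all traces vanishing. In this reduced problem the $L^2$-theory gives a unique variational solution via the G\r{a}rding inequality~\eqref{eqn:elliptic:everywhere} and Lax--Milgram, so the content of the theorem is the a~priori estimate $\doublebar{\vec v}_{W^{p,\smooth}_m(\Omega)}\leq C\doublebar{\widetilde F}_{V^{p,\smooth}_{-m}(\Omega)}$ away from $p=2$, $\smooth=1/2$.

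To prove this estimate I would cover $\overline\Omega$ by balls $\{B(x_k,\rho)\}$ at a scale $\rho$ small enough that, by definition of $\delta(\mat A,\Omega)$ in~\eqref{eqn:MazMS10}, both $\mat A$ and (on boundary balls) the unit normal $\nu$ have BMO-oscillation bounded by a multiple of $\delta(\mat A,\Omega)$ on the scale $\rho$. On interior balls, freezing $\mat A$ at $x_k$ yields a constant-coefficient system on a ball for which interior Calder\'on--Zygmund estimates in $W^{p,\smooth}_m$ are standard. On boundary balls, a bi-Lipschitz straightening map $\Psi_k$ (with $\nabla\Psi_k$ close to a rigid motion in BMO thanks to the VMO-smallness of $\nu$) sends $B(x_k,\rho)\cap\Omega$ to a portion of the half-space; freezing $\mat A$ and pushing forward produces a constant-coefficient problem on a flat half-space, whose Dirichlet problem in $W^{p,\smooth}_m$ is solved via the Agmon--Douglis--Nirenberg theory together with complex interpolation between integer-smoothness spaces. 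Pasting the local solutions through the partition of unity yields a parametrix identity $\vec v = \mathcal{S}\widetilde F + E\vec v$, where $\mathcal{S}$ is the sum of the model solution operators and $E$ encodes commutator terms, coefficient-freezing errors and boundary-straightening errors. The operator norm of $E$ on $W^{p,\smooth}_m(\Omega)$ is bounded by $N(\smooth,p)\,\delta(\mat A,\Omega)$, where $N(\smooth,p)$ comes from the operator norm of $\mathcal{S}$ and the trace constants.

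A Neumann series then inverts $\mathrm{I}-E$ provided $N(\smooth,p)\,\delta(\mat A,\Omega)<1$. The main obstacle, and the source of the precise shape of~\eqref{eqn:MazMS10:bound}, is quantifying $N(\smooth,p)$. The factors $\smooth(1-\smooth)$ in the denominator of $N$ come from the blow-up of the Besov trace/extension constants and of the $W^{p,\smooth}_m\to V^{p,\smooth}_{-m}$ norm of the model solution operator as $\smooth\to 0^+$ or $\smooth\to 1^-$, while the factors $(1/p)(1-1/p)$ reflect the standard Calder\'on--Zygmund degeneration of the singular integrals produced by $\mathcal{S}$ as $p\to 1^+$ or $p\to\infty$. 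Obtaining the combined factor $\smooth^2(1-\smooth)^2(1/p)(1-1/p)/[\smooth(1-\smooth)+(1/p)(1-1/p)]$ requires tracking these blow-ups sharply through complex interpolation (which produces the quadratic $\smooth^2(1-\smooth)^2$) and the dual $p\leftrightarrow p'$ symmetry, and balancing them against the allowable size of $\delta(\mat A,\Omega)$ in the Neumann series; this bookkeeping is the technical heart of the proof.
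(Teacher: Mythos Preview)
This theorem is not proved in the paper at all: it is quoted verbatim as \cite[Theorem~8.1]{MazMS10} and used as a black box to derive Theorem~\ref{thm:MazMS10:translated}. There is therefore no ``paper's own proof'' to compare your proposal against.

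Your sketch is a reasonable outline of the strategy actually employed in the Maz'ya--Mitrea--Shaposhnikova paper (localization, boundary flattening via the small BMO norm of~$\nu$, coefficient freezing via the small BMO norm of~$\mat A$, and a Neumann-series absorption of the error), so as an independent proof plan it is on the right track. But for the purposes of the present paper you were not expected to reproduce that argument: the correct ``proof'' here is simply the citation.
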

Here ${V^{p,\smooth}_{-m}(\Omega)}$ is the dual space ${V^{p',1-\smooth}_{m}(\Omega)}^*$ to ${V^{p',1-\smooth}_{m}(\Omega)}$, and
{\allowdisplaybreaks
\begin{align*}
\doublebar{\vec u}_{W^{p,\smooth}_m(\Omega)} &= \biggl(\sum_{\abs\alpha\leq m} \int_{\Omega} \abs{\partial^\alpha \vec u(x)}^p\dist(x,\partial\Omega)^{p-1-p\smooth}\,dx\biggr)^{1/p},
\\
\doublebar{\vec u}_{V^{p,\smooth}_{m}(\Omega)} &= \biggl(\sum_{\abs\alpha\leq m} \int_{\Omega} \abs{\partial^\alpha \vec u(x)}^p\dist(x,\partial\Omega)^{p-1-p\smooth+p\abs\alpha-pm}\,dx\biggr)^{1/p}%,
%\\
%{V^{p,\smooth}_{-m}(\Omega)}&=({V^{p',1-\smooth}_{m}(\Omega)}^*)
.\end{align*}}%
For convenience, we will treat the case $\vec g=0$, avoiding the ${W^p_{m-1+\smooth}(\partial\Omega)}$ norms, and use \cite[Lemma~\ref*{P:lem:zero:boundary}]{Bar16pA} to contend with boundary values. 
By \cite[Theorem~7.8]{MazMS10},
\begin{equation}\label{eqn:V:W}
{V^{p,\smooth}_{m}(\Omega)}=\{\vec u\in {W^{p,\smooth}_m(\Omega)}: \Tr_k^\Omega\vec u=0 \text{ for all }0\leq k\leq m-1\}
.\end{equation}

We remark that, if $0<\smooth<1$ and $0<1/p<1$, then 
\begin{equation*}
\frac{1}{8}\min(s,1-s,1/p,1-1/p)^2\leq 
\frac{\smooth^2(1-\smooth)^2(1/p)(1-1/p)} {\smooth(1-\smooth)+(1/p)(1-1/p)} %\leq \frac{1}{4}\min(s,1-s,1/p,1-1/p)
\end{equation*}
and that a bound on $\min(s,1-s,1/p,1-1/p)$ (rather than the more complicated condition \eqref{eqn:MazMS10:bound}) is more convenient to apply in the context of Theorem~\ref{thm:atoms}.

In this section we will derive the following well posedness result.
\begin{thm}\label{thm:MazMS10:translated} Let $\Omega$ and $\mat A$ be as in Theorem~\ref{thm:MazMS10}. Suppose furthermore that $\partial\Omega$ is connected.
If $1<p<\infty$, $0<\smooth<1$ and the condition~\eqref{eqn:MazMS10:bound} is satisfied, then the Dirichlet problem
\eqref{eqn:Dirichlet:p:smooth}
is well posed.
\end{thm}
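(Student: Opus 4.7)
The plan is to translate Theorem~\ref{thm:MazMS10}---stated in terms of the unaveraged weighted Sobolev spaces $W^{p,\smooth}_m(\Omega)$ and $V^{p,\smooth}_{-m}(\Omega)$---into the averaged-norm formulation of~\eqref{eqn:Dirichlet:p:smooth}. First I will use \cite[Lemma~\ref*{P:lem:zero:boundary}]{Bar16pA} to reduce to the case $\arr f = 0$. The right-hand side of~\eqref{eqn:MazMS10:bound} is symmetric under $(p,\smooth) \leftrightarrow (p',1-\smooth)$ and $\delta(\mat A^*,\Omega) = \delta(\mat A,\Omega)$, so Theorem~\ref{thm:MazMS10} applies equally to $L$ with exponents $(p,\smooth)$ and to $L^*$ with $(p',1-\smooth)$. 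The cases $1 < p \leq 2$ and $p > 2$ will be treated separately, with the second proceeding by duality against the first.

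For $1 < p \leq 2$ the argument is direct. Inequality~\eqref{eqn:unaveraged<averaged} with $q = p$ bounds the unaveraged weighted $L^p$ norm of $\arr H$ by $\doublebar{\arr H}_{L^{p,\smooth}_{av}(\Omega)}$, and then weighted H\"older's inequality in the pairing $\vec\varphi \mapsto \langle \arr H, \nabla^m \vec\varphi\rangle$ against $\vec\varphi \in V^{p',1-\smooth}_m(\Omega)$ shows that $F = \Div_m \arr H$ defines an element of $V^{p,\smooth}_{-m}(\Omega)$ with the matching bound. Theorem~\ref{thm:MazMS10} with $\vec g = 0$ then produces $\vec u \in W^{p,\smooth}_m(\Omega)$ with $\Tr_k^\Omega \vec u = 0$ for all $k \leq m-1$, so~\eqref{eqn:V:W} places $\vec u \in V^{p,\smooth}_m(\Omega)$ and the unaveraged weighted $L^p$ norm of $\nabla^m \vec u$ is controlled by $C\doublebar{\arr H}_{L^{p,\smooth}_{av}(\Omega)}$. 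The converse estimate in Lemma~\ref{lem:unaveraged} then converts this into the desired averaged bound $\doublebar{\vec u}_{\dot W^{p,\smooth}_{m,av}(\Omega)} \leq C\doublebar{\arr H}_{L^{p,\smooth}_{av}(\Omega)}$. Uniqueness is inherited from Theorem~\ref{thm:MazMS10}, since any $\vec u \in \dot W^{p,\smooth}_{m,av}(\Omega)$ with vanishing traces and zero data lies in $V^{p,\smooth}_m(\Omega)$ by~\eqref{eqn:unaveraged<averaged}.

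For $p > 2$, both existence and uniqueness proceed by duality. For smooth $\arr H$ compactly supported in $\Omega$, Theorem~\ref{thm:MazMS10} still produces a solution $\vec u$; given any smooth compactly supported $\arr F \in L^{p',1-\smooth}_{av}(\Omega)$, the already-established $p' < 2$ case applied to $L^*$ with exponents $(p',1-\smooth)$ yields $\vec w$ solving $L^* \vec w = \Div_m \arr F$ with vanishing traces and $\doublebar{\vec w}_{\dot W^{p',1-\smooth}_{m,av}(\Omega)} \leq C\doublebar{\arr F}_{L^{p',1-\smooth}_{av}(\Omega)}$. The weak formulations of $L\vec u = \Div_m \arr H$ and $L^* \vec w = \Div_m \arr F$, combined with density approximation justified by the vanishing traces, yield the identity
\begin{equation*}
\langle \nabla^m \vec u, \arr F\rangle = \langle \mat A\nabla^m \vec u, \nabla^m \vec w\rangle = \langle \arr H, \nabla^m \vec w\rangle.
\end{equation*}
Averaged-norm duality---obtained by pairing the Whitney-sum representations of $L^{p,\smooth}_{av}$ recorded at the start of Section~\ref{sec:proof} via Cauchy--Schwarz on each cube and H\"older across cubes---then bounds the right-hand side by $C\doublebar{\arr H}_{L^{p,\smooth}_{av}} \doublebar{\arr F}_{L^{p',1-\smooth}_{av}}$. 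Taking the supremum over $\arr F$ controls $\doublebar{\nabla^m \vec u}_{L^{p,\smooth}_{av}}$, and density extends to arbitrary $\arr H \in L^{p,\smooth}_{av}(\Omega)$. Uniqueness follows by the same pairing: any zero-data $\vec w \in \dot W^{p,\smooth}_{m,av}(\Omega)$ with vanishing traces satisfies $\langle \nabla^m \vec w, \arr F\rangle = 0$ for every admissible test $\arr F$, forcing $\nabla^m \vec w = 0$, and the vanishing traces then give $\vec w = 0$.

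The main technical obstacle will be rigorously justifying the integration-by-parts identity in the duality pairing, since neither $\vec u$ nor $\vec w$ has compact support in $\Omega$. This will require approximating each by smooth compactly supported functions in the appropriate unaveraged norm---possible for $\vec w$ because the $p' \leq 2$ case supplies $\vec w \in V^{p',1-\smooth}_m(\Omega)$ via~\eqref{eqn:unaveraged<averaged}---and then passing to the limit using averaged-norm duality and the bounds from Theorem~\ref{thm:MazMS10} and Lemma~\ref{lem:unaveraged}.
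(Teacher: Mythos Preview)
Your existence argument for $1<p\leq 2$ matches the paper's exactly, and your handling of $p>2$ by explicit duality is a reasonable substitute for the paper's citation of \cite[Theorems~\ref*{P:thm:exist:unique} and~\ref*{P:thm:unique:exist}]{Bar16pA}, which package the same duality in an abstract form; the paper's route is cleaner precisely because it avoids the integration-by-parts justification you flag at the end.

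There is, however, a genuine gap in your uniqueness argument for $p\leq 2$. You assert that any $\vec u\in\dot W^{p,\smooth}_{m,av}(\Omega)$ with vanishing traces lies in $V^{p,\smooth}_m(\Omega)$ ``by~\eqref{eqn:unaveraged<averaged}''. But \eqref{eqn:unaveraged<averaged} only controls
\[
\int_\Omega \abs{\nabla^m\vec u(x)}^p\dist(x,\partial\Omega)^{p-1-p\smooth}\,dx,
\]
whereas the $V^{p,\smooth}_m(\Omega)$ norm also requires finiteness of
\[
\int_\Omega \abs{\partial^\alpha\vec u(x)}^p\dist(x,\partial\Omega)^{p-1-p\smooth+p\abs\alpha-pm}\,dx
\quad\text{for all }\abs\alpha\leq m,
\]
and the weights for $\abs\alpha<m$ are \emph{more} singular near $\partial\Omega$. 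Passing from the top-order term to these lower-order terms is a weighted Hardy inequality, not a consequence of~\eqref{eqn:unaveraged<averaged}. The paper supplies this step explicitly: in a Lipschitz graph domain, it integrates $\nabla^k\vec u$ from the boundary (using the vanishing traces), applies H\"older with a carefully chosen auxiliary exponent~$\theta$, interchanges the order of integration, and inducts on~$k$ to obtain
\[
\doublebar{\vec u}_{V^{p,\smooth}_m(\Omega)}\leq C\int_\Omega\abs{\nabla^m\vec u(x)}^p\dist(x,\partial\Omega)^{p-1-p\smooth}\,dx.
\]
Only after this Hardy estimate does~\eqref{eqn:unaveraged<averaged} finish the job. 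Without it, you have not shown that a $\dot W^{p,\smooth}_{m,av}$ solution with zero data lies in the space where Theorem~\ref{thm:MazMS10} gives uniqueness.
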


\begin{proof}%[Proof of Theorem~\ref{thm:MazMS10:translated}]
%By \cite[Lemma~\ref*{P:lem:zero:boundary}]{Bar16pA} we may assume $\arr f=0$.
By \cite[Theorems~\ref*{P:thm:exist:unique} and~\ref*{P:thm:unique:exist}]{Bar16pA}, we need only consider the case $p\leq 2$. By \cite[Lemma~\ref*{P:lem:zero:boundary}]{Bar16pA}, we need only consider the case $\arr f=0$.

We begin with uniqueness. Suppose that $L\vec u=0$ in $\Omega$, that $\Tr_{m-1}^\Omega\vec u=0$, and that $\vec u\in \dot W^{p,\smooth}_{m,av}(\Omega)$. We may normalize $\vec u$ so that $\Tr_k^\Omega\vec u=0$ for all $0\leq k\leq m-1$. It suffices to show that $\vec u\in V^{p,\smooth}_m(\Omega)$, for then $\vec u\in W^{p,\smooth}_m(\Omega)$ and so by Theorem~\ref{thm:MazMS10} must be zero.

Let $V=\{(x',t):t>\psi(x')\}$ be a Lipschitz graph domain and suppose that $\Tr_k^V \vec v=0$ for any $0\leq v\leq m-1$. We may write
\begin{multline*}\int_V \abs{\nabla^k \vec v(x)}^p \dist(x,\partial V)^{p-1-p\smooth+pk-pm}\,dx
\\\approx
\int_{\R^\dmnMinusOne} \int_0^\infty \abs{\nabla^k \vec v(x',t+\psi(x'))}^p t^{p-1-p\smooth+pk-pm}\,dt\,dx'.
\end{multline*}
If $0\leq k\leq m-1$, then
\begin{multline*}\int_V \abs{\nabla^k \vec v(x)}^p \dist(x,\partial V)^{p-1-p\smooth+pk-pm}\,dx
\\\leq C
\int_{\R^\dmnMinusOne} \int_0^\infty \abs[bigg]{\int_0^t\abs{\nabla^{k+1} \vec v(x',r+\psi(x'))}\,dr}^p t^{p-1-p\smooth+pk-pm}\,dt\,dx'.
\end{multline*}
By H\"older's inequality, for any $\theta\in\R$ with $p'\theta<1$, we have that
\begin{multline*}\int_V \abs{\nabla^k \vec v(x)}^p \dist(x,\partial V)^{p-1-p\smooth+pk-pm}\,dx'
\\\leq C
\int_{\R^\dmnMinusOne} \int_0^\infty 
\int_0^t\abs{\nabla^{k+1}\vec v(x',s+\psi(x'))}^p  r^{p\theta}\,dr \,
t^{p-1-p\smooth+pk-pm+p/p'-p\theta}\,dt\,dx'.
\end{multline*}
Changing the order of integration, we see that if $p-p\smooth+pk-pm+p/p'<p\theta$, then
\begin{multline*}\int_V \abs{\nabla^k \vec v(x)}^p \dist(x,\partial V)^{p-1-p\smooth+pk-pm}\,dx'
\\\leq C
\int_{\R^\dmnMinusOne} \int_0^\infty \abs{\nabla^{k+1}\vec v(x',r+\psi(x'))}^p 
r^{p-p\smooth+pk-pm+p/p'}\,dr \,dx'
\\\approx
\int_V \abs{\nabla^{k+1}\vec v(x)}^p \dist(x,\partial V)^{p-1-p\smooth+p(k+1)-pm}\,dx.
\end{multline*}
If $s>0$ and $k\leq m-1$, then there is a $\theta$ that satisfies both of the conditions given above.
By induction, we have that
\begin{equation*}\doublebar{\vec v}_{V^{p,\smooth}_m(V)}\leq C\int_V \abs{\nabla^m\vec v(x)}^p \dist(x,\partial V)^{p-1-p\smooth}\,dx.\end{equation*}
A standard patching argument shows that if $\Omega$ is a bounded Lipschitz domain and $\Tr_k^\Omega\vec u=0$ for all $0\leq k\leq m-1$, then 
\begin{equation*}\doublebar{\vec u}_{V^{p,\smooth}_m(\Omega)}\leq C\int_\Omega \abs{\nabla^m\vec u(x)}^p \dist(x,\partial \Omega)^{p-1-p\smooth}\,dx.\end{equation*}
Finally, by the bound~\eqref{eqn:unaveraged<averaged}, if $p\leq 2$ then
\begin{equation*}\int_\Omega \abs{\nabla^m\vec u(x)}^p \dist(x,\partial \Omega)^{p-1-p\smooth}\,dx
\leq C\doublebar{\vec u}_{\dot W^{p,\smooth}_{m,av}(\Omega)}^p\end{equation*}
as desired. This completes the proof of uniqueness.

We now establish existence of solutions. By \cite[Lemma~\ref*{P:lem:zero:boundary}]{Bar16pA}, we need only consider the case $\arr f=0$. 

Choose some $\arr H\in L^{p,\smooth}_{av}(\Omega)$. 
Again by the bound~\eqref{eqn:unaveraged<averaged}, and because $p\leq 2$, we have that
\begin{equation*}\int_\Omega \abs{\arr H(x)}^p \dist(x,\partial \Omega)^{p-1-p\smooth}\,dx
\leq C\doublebar{\arr H}_{L^{p,\smooth}_{av}(\Omega)}^p.\end{equation*}
Let $\Div_m\arr H$ be the distribution given by
\begin{equation*}
%\label{eqn:Div}
\bigl\langle \vec\varphi, \Div_m\arr H\bigr\rangle_\Omega
=
(-1)^m\bigl\langle\nabla^m\vec\varphi, \arr H\bigr\rangle_\Omega
.\end{equation*}
If $\vec\varphi \in V^{p',1-\smooth}_m(\Omega)$, then 
\begin{equation*}\langle \vec\varphi,\Div_m\arr H\rangle_\Omega 
= (-1)^m \int_\Omega \langle \nabla^m \vec \varphi(x),\arr H(x)\rangle \,dx\end{equation*}
and by H\"older's inequality and because $1-1/p-\smooth=-(1-1/p'-(1-\smooth))$,
\begin{align*}
\abs{\langle \vec\varphi,\arr H\rangle_\Omega}
&\leq \biggl(\int_\Omega \abs{\nabla^m\vec\varphi(x)}^{p'} \dist(x,\partial\Omega)^{1-1/p'-(1-\smooth)}\,dx\biggr)^{1/p'}
\\&\qquad\qquad\qquad\times\biggl(\int_\Omega \abs{\arr H(x)}^p \dist(x,\partial \Omega)^{p-1-p\smooth}\,dx\biggr)^{1/p}
\\
&\leq C\doublebar{\vec\varphi}_{V^{p',1-\smooth}_m(\Omega)} \doublebar{\arr H}_{L^{p,\smooth}_{av}(\Omega)}
.\end{align*}
Thus, $\Div_m\arr H\in V^{p,\smooth}_{-m}(\Omega)$. Thus, by Theorem~\ref{thm:MazMS10} there is some $\vec u\in V^{p,\smooth}_m(\Omega)$ with $L\vec u=\Div_m\arr H$ in~$V$ and with $\doublebar{\vec u}_{W^{p,\smooth}_m(\Omega)}\leq \doublebar{\arr H}_{L^{p,\smooth}_{av}(\Omega)}$.

It is clear that
\begin{equation*}\int_\Omega \abs{\nabla^m\vec u(x)}^p \dist(x,\partial \Omega)^{p-1-p\smooth}\,dx
\leq \doublebar{\vec u}_{W^{p,\smooth}_m(\Omega)}^p\leq C\doublebar{\arr H}_{L^{p,\smooth}_{av}(\Omega)}^p.\end{equation*}
By Lemma~\ref{lem:unaveraged}, we may improve to a $\dot W^{p,\smooth}_{m,av}(\Omega)$-norm on~$\vec u$, as desired.
\end{proof}

\bibliographystyle{amsalpha}
\bibliography{bibli}
\end{document}